\newtheorem{theorem}{Theorem}[section]
\newtheorem{proposition}[theorem]{Proposition} 
\newtheorem{lemma}[theorem]{Lemma}
\newtheorem{remark}[theorem]{Remark}
\newtheorem{remark&definition}[theorem]{Remark and Definition}
\newtheorem{definition}[theorem]{Definition}
\begin{document}

\title[Spherical representations for $C^*$-flows III]{Spherical representations for $C^*$-flows III: \\
\small Weight-extended branching graphs} 
\author{Yoshimichi UEDA}
\address{
Graduate School of Mathematics, Nagoya University, 
Furocho, Chikusaku, Nagoya, 464-8602, Japan
}
\email{ueda@math.nagoya-u.ac.jp}
\date{\today}
\thanks{This work was supported by Grant-in-Aid for Scientific Research (B) JP18H01122.}
\maketitle
\begin{abstract}
We apply Takesaki's and Connes's ideas on structure analysis for type III factors to the study of links (a short term of Markov kernels) appearing in asymptotic representation theory. 
\end{abstract}

\allowdisplaybreaks{

\tableofcontents

\section{Introduction}\label{S1}

Asymptotic representation theory was initiated by Vershik and Kerov around 1980, and investigates unitary characters of inductive limits of finite/compact groups. The theory involves several operator algebraic tools such as AF-algebras with their dimension groups since its birth; see e.g., \cite{Kerov:Book}. The main classification problem (on factor representations) in the theory is described in terms of links (or equivalently, Markov kernels) on branching graphs; see e.g., \cite{Kerov:Book},\cite {BorodinOlshanski:Book}. For the infinite symmetric group (the inductive limit of symmetric groups), the branching graph is Young poset and the link is obtained from its multiplicity function. In this way, the study of asymptotic representation theory for \emph{ordinary} groups can be studied by looking at only branching graphs. However, one can consider links that do not match multiplicity functions. Such a link naturally arises in the quantum group setting as an effect of $q$-deformation (see \cite{Gorin:AdvMath12},\cite{Sato:JFA19}), and we have developed, in \cite{Ueda:preprint20,Ueda:SIGMA22}, an abstract framework to discuss those from the viewpoint of Olshanski's spherical representation theory in the general operator algebraic setting. This paper is a kind of supplements to those previous papers, and the purpose is to introduce a new method of studying general links on branching graphs, which admits a $K$-theoretic interpretation. 

Our abstract framework naturally leads us to the use of Takesaki's idea \cite{Takesaki:ActaMath73} on general structure analysis for type III factors (based on his celebrated duality theorem) and Connes's idea \cite{Connes:JFA74} on almost-periodic weights in the study of links that do not match multiplicity functions. Namely, the framework starts with an inductive sequence $A_n$ of atomic $W^*$-algebras with continuous flows $\alpha_n^t : \mathbb{R} \curvearrowright A_n$, and then take its inductive limit $(A,\alpha^t) = \varinjlim (A_n,\alpha_n^t)$. We apply the construction of Takesaki duals to the inductive sequence and obtain a new inductive sequence $\widetilde{A}_n$ of atomic $W^*$-algebras again equipped with actions $\widetilde{\alpha}_n^\gamma$ of discrete subgroup $\Gamma$ of the multiplicative group $\mathbb{R}_+^\times$. Take the inductive limit $(\widetilde{A},\widetilde{\alpha}^\gamma) = \varinjlim (\widetilde{A}_n,\widetilde{\alpha}_n^\gamma)$, and the locally normal $(\alpha^t,\beta)$-KMS states $K_\beta^\mathrm{ln}(\alpha^t)$ are shown to be affine-isomorphic to the tracial weights $\tau$ on $\widetilde{A}$ that are locally normal semifinite and suitably scaling under $\widetilde{\alpha}^\gamma$. This procedure is explained in section \ref{S3}. We then interpret this procedure in terms of links on branching graphs. This is done in section \ref{S4}. A consequence is that the study of a general link on a branching graph is reduced to that of the link arising from the multiplicity function on an extended branching graph \emph{with group action}. This new approach allows us to use the notion of dimension groups explicitly. The reader who is only interested in the study of links may directly go to section \ref{S4.3}, where the present method is given without appealing to any operator algebras. In section \ref{S5}, we examine a relation between the present method and $K$-theory. A consequence is to give a way to connect the study of general links to $K_0$-groups. In final section \ref{S6}, we examine the present method with the infinite dimensional quantum unitary group $\mathrm{U}_q(\infty)$, whose formulation was precisely given in part II of this series of papers. The consequence there explains that the present method is closed in the class of inductive limits of compact quantum groups and should be regarded as a way to make the special positive elements $\rho_n \in \mathcal{U}(\mathrm{U}_q(n))$, $n=0,1,\dots$, (see, e.g., \cite[equation (4.5)]{Ueda:SIGMA22}) form an inductive sequence by enlarging algebras in question. See section \ref{S6.2}.

\medskip
We will use the following notation rule: $\mathcal{F} \Subset \Gamma$ means that $\mathcal{F}$ is a finite subset of a set $\Gamma$. For a $C^*$-algebra $C$, we denote by $C_+$ the cone of its positive elements. We also mention that our main references on operator algebras are still Bratteli--Robinson's books \cite{BratteliRobinson:Book1,BratteliRobinson:Book2} as our previous two papers \cite{Ueda:preprint20,Ueda:SIGMA22}, but we have to refer to Takesaki's book vol.II \cite{Takesaki:Book2} concerning weights on $C^*$-/$W^*$-algebras and the so-called Tomita--Takesaki theory with its applications to type III factors.   

\section{General setup}\label{S2}

Let $A_n$, $n=1,2,\dots$, be atomic $W^*$-algebras with separable preduals, and put $A_0=\mathbb{C}1$. We assume that the $A_n$ form an inductive sequence by unital normal embeddings $A_n \hookrightarrow A_{n+1}$, $n=0,1,\dots$. Let $A = \varinjlim A_n$ be the inductive (direct) limit $C^*$-algebra. For each $n$, we denote by $\mathfrak{Z}_n$ all the minimal projections in $\mathcal{Z}(A_n)$. 

Assume that we have a flow $\alpha : \mathbb{R} \curvearrowright A$ such that $\alpha^t(A_n) = A_n$ holds for every $t\in\mathbb{R}$ and $n\geq0$ (that is, $\alpha^t$ is an \emph{inductive flow}) and moreover that the restriction of $\alpha^t$ to each $A_n$, denoted by $\alpha_n^t : \mathbb{R} \curvearrowright A_n$, is continuous in the $u$-topology. The $u$-continuity assumption makes every flow $\alpha_n^t$ fix elements in $\mathcal{Z}(A_n)$. See \cite[Lemma 7.1]{Ueda:preprint20} for details. Thus, for each $z \in \mathfrak{Z}_n$, $n \geq 0$, the restriction of $\alpha_n^t$ to $zA_n$ defines a `local' flow $\alpha_z^t$. 

For each $z\in\mathfrak{Z}_n$, $zA_n$ is identified with all the bounded operators $B(\mathcal{H}_z)$ on a Hilbert space $\mathcal{H}_z$, since $A_n$ is atomic. Then, for each $z \in \mathfrak{Z}_n$, $n \geq 0$, we can find a unique (up to positive scaling) non-singular positive self-adjoint operator $\rho_z$ affiliated with $zA_n = B(\mathcal{H}_z)$ such that $\alpha_z^t = \mathrm{Ad}\rho_z^{it}$ for every $t \in \mathbb{R}$. \emph{Throughout this paper, we will consider only the case when all $\rho_z$ are diagonalizable.} This is fulfilled when all the \emph{dimensions} $\dim(z) := \dim(\mathcal{H}_z)<\infty$.    

To the inductive sequence $A_n$ we associate a branching graph together with multiplicity function as follows. The vertex set is $\mathfrak{Z} = \bigsqcup_{n\geq0}\mathfrak{Z}_n$, and the multiplicity function $m : \bigsqcup_{n\geq0} \mathfrak{Z}_{n+1}\times\mathfrak{Z}_n \to \mathbb{N}\cup\{0,\infty\}$ is defined to be the multiplicity of $z' A_n = B(\mathcal{H}_{z'})$ in $zA_{n+1} = B(\mathcal{H}_z)$ via $A_n \hookrightarrow A_{n+1}$ for $(z,z') \in \mathfrak{Z}_{n+1}\times\mathfrak{Z}_n$. We observe that 
\[
\bigcup_{z'\in\mathfrak{Z_n}} \{z \in \mathfrak{Z}_{n+1}; m(z,z') > 0\} = \mathfrak{Z}_{n+1}, \qquad \bigcup_{z\in\mathfrak{Z}_{n+1}} \{z' \in \mathfrak{Z}_n ; m(z,z') > 0\} = \mathfrak{Z}_n
\] 
for all $n\geq0$, and  
\begin{equation}\label{Eq2.1}
\mathrm{Tr}(zz') = m(z,z')\,\dim(z'), \qquad (z,z') \in  \bigsqcup_{n\geq0} \mathfrak{Z}_{n+1}\times\mathfrak{Z}_n  
\end{equation}
hold, where $\mathrm{Tr}$ stands for the non-normalized trace on $zA_{n+1} = B(\mathcal{H}_z)$. We also remark that 
\[
\dim(z) = \sum_{z' \in \mathfrak{Z}_{n-1}} m(z,z') \dim(z') = \cdots = \sum_{z_i \in \mathfrak{Z}_i (i=1,\dots,n-1)} m(z,z_{n-1})\cdots m(z_2,z_1) m(z_1,1)
\] 
for every $z \in \mathfrak{Z}_n$. The edge set is defined to be all the $(z,z') \in \bigsqcup_{n\geq0} \mathfrak{Z}_{n+1}\times\mathfrak{Z}_n$ with $m(z,z') > 0$. We have known (see \cite[section 9]{Ueda:preprint20}) that the graph $(\mathfrak{Z},m)$ completely remembers the inductive sequence $A_n$. 

Let an inverse temperature $\beta \in \mathbb{R}$ be fixed throughout in such a way that $\mathrm{Tr}(\rho_z^{-\beta}) < \infty$ for all $z \in \mathfrak{Z}_n$, $n\geq1$. For each $z \in \mathfrak{Z}_n$, $n\geq0$, a unique (faithful, normal) $(\alpha_z^t,\beta)$-KMS state $\tau_z^\beta = \tau^{(\alpha_z^t,\beta)}$ on $zA_n = B(\mathcal{H}_z)$ is given by 
\begin{equation}\label{Eq2.2}
x \in B(\mathcal{H}_z) \mapsto \tau_z^\beta(x) := \frac{\mathrm{Tr}(\rho_z^{-\beta} x)}{\mathrm{Tr}(\rho_z^{-\beta})} \in \mathbb{C}.
\end{equation} 
In what follows, we write $\dim_\beta(z) = \dim_{(\alpha^t,\beta)}(z) := \mathrm{Tr}(\rho_z^{-\beta})$. 

We have discussed, in \cite{Ueda:preprint20,Ueda:SIGMA22}, locally normal $(\alpha^t,\beta)$-spherical representations for $A = \varinjlim A_n$, whose classification can be understood in terms of link over $\mathfrak{Z} = \bigsqcup_{n\geq0}\mathfrak{Z}_n$. In the present setting, the link $\kappa = \kappa_{(\alpha^t,\beta)} : \bigsqcup_{n\geq0} \mathfrak{Z}_{n+1}\times\mathfrak{Z}_n \to [0,1]$ is given by 
\begin{equation}\label{Eq2.3}
\kappa(z,z') := \tau_z^\beta(zz') = \frac{\mathrm{Tr}(\rho_z^{-\beta}z')}{\dim_\beta(z)}, \qquad (z,z') \in  \bigsqcup_{n\geq0} \mathfrak{Z}_{n+1}\times\mathfrak{Z}_n. 
\end{equation}
If $\beta=0$ and all $\dim(z) < \infty$, then $\dim_\beta(z) = \dim(z)$ holds for every $z \in \mathfrak{Z}$ and the link $\kappa(z,z')$ is nothing less than 
\[
\mu(z,z') := \frac{1}{\dim(z)}\,m(z,z')\,\dim(z'), \qquad (z,z') \in  \bigsqcup_{n\geq0} \mathfrak{Z}_{n+1}\times\mathfrak{Z}_n.
\] 
We call this special link $\mu : \bigsqcup_{n\geq0} \mathfrak{Z}_{n+1}\times\mathfrak{Z}_n \to [0,1]$ the \emph{standard link} (this is available only when all $\dim(z)<\infty$). The standard link fits the notion of dimension groups, but other links do not. 
Consequently, to a given branching graph $(\mathfrak{Z},m)$ we associate the standard link $\mu$ under all the $\dim(z)<\infty$, but a non-standard link on $(\mathfrak{Z},m)$ can also be considered even when $\mu$ cannot. Moreover, we illustrated in \cite[section 9]{Ueda:preprint20} how any non-standard link arises in the spherical representation theory for a certain class of $C^*$-flows. 

\section{$\rho$-extension}\label{S3}

We fix a family $\rho = \{\rho_z\}_{z\in\mathfrak{Z}}$ as in section \ref{S2}, that is, each $\rho_z^{it}$ implements the restriction $\alpha_z^t$ of $\alpha_n^t$ to $zA_n$, $z \in \mathfrak{Z}_n \subset \mathfrak{Z}$, and all $\rho_z$ are diagonalizable. Let $\Gamma=\Gamma(\rho)$ be the discrete (countable) subgroup generated by all the eigenvalues of $\rho_z$'s in the multiplicative group $\mathbb{R}_+^\times = (0,\infty)$. Let $G = \widehat{\Gamma}$ be the dual compact abelian group of $\Gamma$.  There is a continuous homomorphism from $\mathbb{R}$ into $G$ with dense image such that $\langle \gamma,t\rangle = \gamma^{it}$ holds for every $\gamma \in \Gamma$ when $t \in \mathbb{R}$ is regarded as an element of $G$ via the homomorphism, where $\langle\,\cdot\,,\,\cdot\,\rangle : \Gamma \times G \to \mathbb{T}$ is the dual pairing. It is evident that every unitary representation $t\mapsto u_z(t) = \rho_z^{it}$ of the real numbers $\mathbb{R}$ uniquely extends to $G$ by using the spectral decomposition of $\rho_z$, and hence so does every flow $\alpha_n^t$ too. 

\medskip
For each $n=0,1,\dots$, we take the $W^*$-crossed product $\widetilde{A}_n := A_n\bar{\rtimes}_{\alpha_n^g} G$, whose construction (see e.g., \cite[Definition 2.7.3]{BratteliRobinson:Book1}) is reviewed in our convenient way as follows. Since $A_n$ has separable predual and thus is $\sigma$-finite, $A_n$ acts on a Hilbert space $\mathcal{K}_n$ with separating and cyclic vector. (See e.g., \cite[Proposition 2.5.6]{BratteliRobinson:Book1}.) Let $L^2(G;\mathcal{K}_n)$ be the $\mathcal{K}_n$-valued $L^2$-space over $G$ with respect to the Haar probability measure $dg$, which can be identified with the completion of the $\mathcal{K}_n$-valued continuous functions $C(G;\mathcal{K}_n)$ equipped with inner product 
\[
(\xi\,|\,\eta) := \int_G (\xi(g)\,|\,\eta(g))_{\mathcal{K}_n}\,dg, \qquad \xi, \eta \in C(G;\mathcal{K}_n). 
\]
We define an injective normal $*$-homomorphism $\pi_{\alpha_n} : A_n \to B(L^2(G;\mathcal{K}_n))$ by 
\[
(\pi_{\alpha_n}(a)\xi)(g) := \alpha_n^{g^{-1}}(a)\xi(g), \qquad a \in A_n, \quad \xi \in C(G;\mathcal{K}_n) \subset L^2(G;\mathcal{K}_n).
\] 
Let $\lambda : G \curvearrowright L^2(G;\mathcal{K}_n)$ be the unitary representation defined by 
\[
(\lambda(g_1)\xi)(g_2):=\xi(g_1^{-1}g_2), \qquad g_1,g_2 \in G, \quad \xi \in C(G;\mathcal{K}_n) \subset L^2(G;\mathcal{K}_n).
\] 
We have a natural identification $L^2(G;\mathcal{K}_n) = \mathcal{K}_n\,\bar{\otimes}\,L^2(G)$ by 
\[
(\xi\otimes f)(g) = f(g)\xi, \qquad \xi \in \mathcal{K}_n, \quad f \in C(G) \subset L^2(G), 
\]
where $C(G) \subset L^2(G)$ denote the continuous functions on $G$ and the $L^2$-space over $G$ with respect to $dg$, respectively. Via the identification we have 
\[
\lambda(g) := 1\otimes\lambda_g, \qquad g \in G
\] 
with the left regular representation $\lambda_g$ of $G$. Then, the $W^*$-crossed product $A_n\bar{\rtimes}_{\alpha_n^g} G$ is the $W^*$-subalgebra of $A_n\,\bar{\otimes}\,B(L^2(G))$ generated by $\pi_{\alpha_n}(A_n)$ and $\lambda(G)$ in $A_n\,\bar{\otimes}\,B(L^2(G))$ with covariant relation 
\[
\lambda(g)\pi_{\alpha_n}(a) = \pi_{\alpha_n}(\alpha_n^g(a))\lambda(g), \qquad a \in A_n, \quad g \in G. 
\]
Remark that (the algebraic structure of) the resulting $W^*$-algebra $A_n\bar{\rtimes}_{\alpha_n^g} G$ is known to be independent of the choice of representation $A_n \subset B(\mathcal{K}_n)$; see \cite[section X.1]{Takesaki:Book2}. 

\medskip
We observe that $\widetilde{A}_0 = \mathbb{C}1\bar{\rtimes}G \cong  \ell^\infty(\Gamma)$ is given by 
\begin{equation}\label{Eq3.1}
e_\gamma = \int_G \overline{\langle \gamma, g\rangle}\,\lambda(g)\,dg \longleftrightarrow \delta_\gamma,   
\end{equation}
where $\delta_\gamma$ is the Dirac function at $\gamma$. The so-called \emph{dual action} $\widetilde{\alpha}_n : \Gamma \curvearrowright \widetilde{A}_n$ (see e.g., \cite[Definition 2.7.3]{BratteliRobinson:Book1}) can be constructed in such a way that 
\begin{equation}\label{Eq3.2} 
\widetilde{\alpha}_n^\gamma(\pi_{\alpha_n}(a)) = \pi_{\alpha_n}(a), \quad \widetilde{\alpha}_n^\gamma(\lambda(g)) = \overline{\langle \gamma,g\rangle}\,\lambda(g) \qquad a \in A_n, \quad \gamma \in \Gamma, \quad g \in G, 
\end{equation} 
and the latter relation is rephrased as 
\begin{equation}\label{Eq3.3}
\widetilde{\alpha}_n^\gamma(e_{\gamma'}) = e_{\gamma\gamma'},  \qquad \gamma,\gamma' \in \Gamma. 
\end{equation} 

Since $\alpha_{n+1}^g = \alpha_n^g$ holds on $A_n$ for every $g \in G$, we have a normal embedding $\widetilde{A}_n \hookrightarrow \widetilde{A}_{n+1}$ determined by 
\begin{equation}\label{Eq3.4}
\pi_{\alpha_n}(a) \mapsto \pi_{\alpha_{n+1}}(a), \qquad a \in A_n. 
\end{equation}   
Hence, the $\widetilde{A}_n$ form an inductive sequence, and let $\widetilde{A} := \varinjlim\widetilde{A}_n$ be the inductive limit $C^*$-algebra. Moreover, since 
\[
\xymatrix{ 
A_n \ar@{^{(}->}[r] \ar[d]^{\pi_{\alpha_n}} & A_{n+1} \ar[d]^{\pi_{\alpha_{n+1}}} \\
\pi_{\alpha_n}(A_n) \ar@{^{(}->}[r] \ar@{^{(}->}[d] & \pi_{\alpha_{n+1}}(A_{n+1}) \ar@{^{(}->}[d] \ar@{}[lu]|{\circlearrowleft} \ar@{}[ld]|{\circlearrowleft}\\
\widetilde{A}_n \ar@{^{(}->}[r] & \widetilde{A}_{n+1}, 
}
\]
there is a unique injective $*$-homomorphism $\pi_\alpha := \varinjlim \pi_{\alpha_n}  : A = \varinjlim A_n \to \widetilde{A} = \varinjlim \widetilde{A}_n$ such that $\pi_\alpha(a) = \pi_{\alpha_m}(a)$ in $\widetilde{A}$ for every $a \in A_n$ and $m\geq n$. By \eqref{Eq3.3} and \eqref{Eq3.4} we can take the inductive limit action $\widetilde{\alpha} := \varinjlim\widetilde{\alpha}_n : \Gamma \curvearrowright \widetilde{A}$, which acts on $\pi_\alpha(A)$ trivially. 

\begin{definition}\label{D3.1} We then call $(\widetilde{\alpha} : \Gamma \curvearrowright \widetilde{A} = \varinjlim \widetilde{A}_n)$ as above the \emph{$\rho$-extension} of $(A,\alpha^t) = \varinjlim (A_n,\alpha_n^t)$. 
\end{definition}   

We remark that $\Gamma$ is not a canonical object of the flow $\alpha^t$ because it depends on the choice of $\rho_z$'s. In the next section, we will select $\Gamma$ to be a canonical object under an additional assumption on $A = \varinjlim A_n$.  

\medskip
Following a standard strategy in operator algebras dating back to Takesaki's structure theorem for type III factors (see e.g., \cite[section XII.1]{Takesaki:Book2}), we will interpret $K_\beta^\mathrm{ln}(\alpha^t)$ as a suitable class of tracial weights on $\widetilde{A}$.   

\medskip
We start with necessary concepts/facts on (tracial) weights on $C^*$-algebras (see \cite[chapter VII]{Takesaki:Book2} as well as \cite[section V.2]{Takesaki:Book1}). A \emph{weight} $\psi$ on a $C^*$-algebra $C$ means a map from $C_+$ to $[0,+\infty]$ such that 
\begin{align*} 
\psi(c_1+c_2) &= \psi(c_1)+\psi(c_2), \qquad c_1,c_2 \in C_+ \\
\psi(t c) &= t\psi(c), \qquad t \in [0,+\infty), \quad c \in C_+
\end{align*} 
with convention $0\times(+\infty)=0$. We call $\psi$ a \emph{tracial weight} if, in addition, $\psi(c^* c) = \psi(cc^*)$ holds for any $c \in C$. The \emph{definition domain} $\mathfrak{m}_\psi$ of $\psi$ is defined to be the linear span of all the $c_1^* c_2$ with $\psi(c_k^* c_k) < +\infty$, $k=1,2$. By the polarization identity we can extend $\psi$ to $\mathfrak{m}_\psi$ as a linear functional. When $\psi$ is tracial, $\psi$ enjoys that $\psi(c_1 c_2) = \psi(c_2 c_1)$ if one of $c_i \in C$ falls into $\mathfrak{m}_\psi$; see the proof of \cite[Lemma V.2.16]{Takesaki:Book1}. When $C$ is a $W^*$-algebra, $\psi$ is said to be \emph{normal} if $c_i \nearrow c$ in $C_+$ implies $\psi(c_i) \nearrow \psi(c)$, and also \emph{semifinite} if $C$ is generated as a $W^*$-algebra by all the $c \in C_+$ with $\psi(c) < +\infty$.     

\begin{definition}\label{D3.2} 
{\rm(1)} An \emph{$(\widetilde{\alpha}^\gamma,\beta)$-scaling trace} is defined to be a tracial weight $\tau : (\widetilde{A})_+ \to [0,\infty]$ such that 
\begin{itemize}
\item[(i)] For each $x \in \widetilde{A}$ and each $n$, the mapping $y \in (\widetilde{A}_n)_+ \mapsto \tau(xyx^*) \in [0,+\infty]$ is normal,  
\item[(ii)] $\tau\circ\widetilde{\alpha}^\gamma = \gamma^\beta\,\tau$ for all $\gamma \in \Gamma$,  
\item[(iii)] $\tau(e_1) = 1$. 
\end{itemize}
All the $(\widetilde{\alpha}^\gamma,\beta)$-scaling traces are denoted by $TW_\beta^\mathrm{ln}(\widetilde{\alpha}^\gamma)$. 

{\rm(2)} We define a normal semifinite weight $\mathrm{tr}_\beta : (\widetilde{A}_0)_+ \to [0,\infty]$ by $\mathrm{tr}_\beta(e_\gamma) = \gamma^\beta$ for every $\gamma \in \Gamma$. 
\end{definition} 

Remark that items (iii),(iv) in (1) imply that $\tau(e_\gamma) = \gamma^\beta$ for every $\gamma \in \Gamma$ so that $\tau$ is  semifinite on each $\widetilde{A}_n$. 
In fact, letting $e_\mathcal{F} := \sum_{\gamma\in\mathcal{F}} e_\gamma$ with $\mathcal{F} \Subset \Gamma$ we see that $\bigcup_{\gamma \in \mathcal{F}} e_\mathcal{F} (\widetilde{A}_n)_+ e_\mathcal{F}$ is $\sigma$-weakly dense in $(\widetilde{A}_n)_+$ and items (iii),(iv) imply $0\leq \tau(e_\mathcal{F} x e_\mathcal{F}) \leq \Vert x\Vert\,\sum_{\gamma\in\mathcal{F}}\gamma^\beta < +\infty$ for any $x \in (\widetilde{A}_n)_+$.

\begin{lemma}\label{L3.2} For each $\omega \in K_\beta^{\mathrm{ln}}(\alpha^t)$, the restriction of $\omega\,\bar{\otimes}\,\mathrm{id}  : A_n\,\bar{\otimes}\,B(L^2(G)) \to \mathbb{C}1\,\bar{\otimes}\,B(L^2(G))$ (the composition of $x \mapsto 1\otimes x$ and the normal slice map $R_\omega : A\,\bar{\otimes}\,B(L^2(G))$ sending $a\otimes x$ to $\omega(a)x$, see e.g., \cite{Tomiyama:PJM69}) to $\widetilde{A}_n = A_n\,\bar{\rtimes}_{\alpha_n}\,G$ defines a unique normal conditional expectation $\widetilde{E}_{\omega,n} : \widetilde{A}_n \twoheadrightarrow \widetilde{A}_0$ such that $\widetilde{E}_{\omega,n}(\pi_{\alpha_n}(a)) = \omega(a)1$ for every $a \in A_n$. Then, $\widetilde{E}_{\omega,n+1}$ coincides with $\widetilde{E}_{\omega,n}$ on $\widetilde{A}_n$, and the inductive limit conditional expectation $\widetilde{E}_\omega := \varinjlim \widetilde{E}_{\omega,n}$ from $\widetilde{A}=\varinjlim \widetilde{A}_n$ onto $\widetilde{A}_0$ is well defined. 
\end{lemma}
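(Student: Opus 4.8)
The plan is to realise each $\widetilde E_{\omega,n}$ as the restriction of the Tomiyama slice map $E_\omega := \omega\,\bar\otimes\,\mathrm{id}$ and then to verify, on a $\sigma$-weakly total family, that it actually lands in $\widetilde A_0$. Since $\omega$ is locally normal, its restriction $\omega|_{A_n}\in (A_n)_*$ is a normal state, so $E_\omega : A_n\,\bar\otimes\,B(L^2(G)) \to 1\,\bar\otimes\,B(L^2(G))$ is a normal unital completely positive map that restricts to the identity on $1\,\bar\otimes\,B(L^2(G))$ and is a bimodule map over this subalgebra; in particular it is already a normal conditional expectation onto $1\,\bar\otimes\,B(L^2(G))$. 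The whole point of the lemma is therefore that it carries the \emph{smaller} algebra $\widetilde A_n$ into the \emph{smaller} algebra $\widetilde A_0$, which sits inside $1\,\bar\otimes\,B(L^2(G))$ as the $\sigma$-weakly closed algebra generated by $\lambda(G)$.

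The crux is the computation of $E_\omega(\pi_{\alpha_n}(a))$, and it is here that the hypothesis $\omega\in K_\beta^{\mathrm{ln}}(\alpha^t)$ enters. Realising $L^2(G;\mathcal K_n)=\int_G^\oplus \mathcal K_n\,dg$, the operator $\pi_{\alpha_n}(a)$ is decomposable with fibre $\alpha_n^{g^{-1}}(a)\in A_n$ at $g$, hence lies in $A_n\,\bar\otimes\,L^\infty(G)$, and on such operators the slice map acts fibrewise, $E_\omega\bigl(\int_G^\oplus T_g\,dg\bigr)=\int_G^\oplus \omega(T_g)\,dg$. Now every $(\alpha^t,\beta)$-KMS state is $\alpha^t$-invariant (see e.g. \cite[Proposition 5.3.3]{BratteliRobinson:Book2}); since $g\mapsto \omega(\alpha_n^{g}(a))$ is continuous on $G$ and agrees with $\omega(a)$ on the dense image of $\mathbb R$, it is constantly $\omega(a)$. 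Thus each fibre $\omega(\alpha_n^{g^{-1}}(a))$ equals $\omega(a)$, so $E_\omega(\pi_{\alpha_n}(a))=\omega(a)1$. This invariance step is the one I expect to be the main obstacle; the remainder is bookkeeping with the bimodularity and normality of $E_\omega$.

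Next I would use the covariance relation to check that the linear span of the monomials $\pi_{\alpha_n}(a)\lambda(g)$ ($a\in A_n$, $g\in G$) is a $*$-subalgebra, hence $\sigma$-weakly dense in $\widetilde A_n$. Since $\lambda(g)=1\otimes\lambda_g\in 1\,\bar\otimes\,B(L^2(G))$ and $E_\omega$ is a bimodule map over that algebra, the previous paragraph gives
\[
E_\omega(\pi_{\alpha_n}(a)\lambda(g)) = E_\omega(\pi_{\alpha_n}(a))\,\lambda(g) = \omega(a)\,\lambda(g)\in\widetilde A_0 .
\]
By normality of $E_\omega$ and $\sigma$-weak closedness of $\widetilde A_0$, the image $E_\omega(\widetilde A_n)$ lies in $\widetilde A_0$. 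Restricting the range yields a normal map $\widetilde E_{\omega,n}:\widetilde A_n\to\widetilde A_0$ with $\widetilde E_{\omega,n}(\pi_{\alpha_n}(a))=\omega(a)1$, and, being a normal norm-one idempotent onto $\widetilde A_0$, it is a genuine conditional expectation by Tomiyama's theorem.

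Uniqueness and compatibility are then formal. Any normal conditional expectation $\widetilde A_n\to\widetilde A_0$ sending $\pi_{\alpha_n}(a)$ to $\omega(a)1$ is $\widetilde A_0$-bimodular, hence determined on the dense span of the $\pi_{\alpha_n}(a)\lambda(g)$, and so everywhere by normality; this forces uniqueness. For compatibility, under $\widetilde A_n\hookrightarrow\widetilde A_{n+1}$ one has $\pi_{\alpha_n}(a)=\pi_{\alpha_{n+1}}(a)$ for $a\in A_n$ while $\omega|_{A_{n+1}}$ restricts to $\omega|_{A_n}$, so both $\widetilde E_{\omega,n}$ and $\widetilde E_{\omega,n+1}|_{\widetilde A_n}$ are normal conditional expectations onto $\widetilde A_0$ agreeing on the generators $\pi_{\alpha_n}(a)$; by the uniqueness just proved they coincide. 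Consequently the $\widetilde E_{\omega,n}$ respect the connecting maps, and $\widetilde E_\omega:=\varinjlim\widetilde E_{\omega,n}$ is a well-defined conditional expectation from $\widetilde A=\varinjlim\widetilde A_n$ onto $\widetilde A_0$.
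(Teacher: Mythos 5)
Your proposal is correct and follows essentially the same route as the paper: restrict the slice map $\omega\,\bar{\otimes}\,\mathrm{id}$, prove the key identity $\widetilde{E}_{\omega,n}(\pi_{\alpha_n}(a))=\omega(a)1$ using the $G$-invariance of $\omega$ (density of $\mathbb{R}$ in $G$ plus KMS-invariance), and then pass to all of $\widetilde{A}_n$ via $\widetilde{A}_0$-bimodularity and $\sigma$-weak density of the monomials $\pi_{\alpha_n}(a)\lambda(g)$. The only difference is in how the key identity is verified --- the paper picks a representing vector $\xi\in\mathcal{K}_n$ and computes matrix elements $(\pi_{\alpha_n}(a)\,\xi\otimes f_1\,|\,\xi\otimes f_2)$, while you view $\pi_{\alpha_n}(a)$ as a decomposable operator in $A_n\,\bar{\otimes}\,L^\infty(G)$ and apply the slice map fibrewise --- and you additionally write out the uniqueness and compatibility arguments that the paper dismisses as obvious; both mechanisms are valid.
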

\begin{proof} 
Since the image of $\mathbb{R}$ in $G$ is dense and $\omega\circ\alpha^t = \omega$ for all $t \in \mathbb{R}$, we have $\omega\circ\alpha_n^g(a) = \omega(a)$ for all $g \in G$ and $a \in A_n$. By \cite[Theorem 2.5.31(a)]{BratteliRobinson:Book1} we can choose a representing vector $\xi \in \mathcal{K}_n$ of the restriction of $\omega$ to $A_n$, so that $\omega(a) = (a\,\xi\,|\,\xi)_{\mathcal{K}_n}$ holds for every $a \in A_n$. By definition we observe that $(R_\omega(x)f_1\,|\,f_2)_{L^2(G)} = (x\,\xi\otimes f_1\,|\,\xi\otimes f_2)_{\mathcal{K}_n\,f_2\bar{\otimes}\,L^2(G)}$ for all $x \in A_n\,\bar{\otimes}\,B(L^2(G))$ and $f_1,f_2 \in L^2(G)$. By the identification $L^2(G;\mathcal{K}_n) = \mathcal{K}_n\,\bar{\otimes}\,L^2(G)$,  we have 
\begin{align*}
(\pi_{\alpha_n}(a)\,\xi\otimes f_1\,|\,\xi\otimes f_2)_{\mathcal{K}_n\,\bar{\otimes}\,L^2(G)} 
&= 
\int_G (\alpha_n^{g^{-1}}(a)\xi\,|\,\xi)_{\mathcal{K}_n}\,f_1(g)\overline{f_2(g)}\,dg \\
&= 
\int_G \omega(\alpha_n^{g^{-1}}(a))\,f_1(g)\overline{f_2(g)}\,dg \\
&= 
\omega(a)\,(f_1\,|\,f_2)_{L^2(G)}
\end{align*}
for all $a \in A_n$ and $f_1,f_2 \in C(G) \subset L^2(G)$. We conclude that $R_\omega(\pi_{\alpha_n}(a)) = \omega(a)\,1_{L^2(G)}$ and hence $(\omega\,\bar{\otimes}\,\mathrm{id})(\pi_{\alpha_n}(a)) = \omega(a)\,1$ for all $a \in A_n$. Since the $\pi_{\alpha_n}(a)\lambda(g)$ form a $\sigma$-weakly total subset of $\widetilde{A}_n$, it follows that $(\omega\,\bar{\otimes}\,\mathrm{id})(\widetilde{A}_n) = \widetilde{A}_0$ and hence the restriction of $\omega\,\bar{\otimes}\,\mathrm{id}$ to $\widetilde{A}_n$ gives the desired conditional expectation $\widetilde{E}_{\omega,n}$. The rest of assertion is now obvious. 
\end{proof} 

\begin{lemma}\label{L3.4} For each $\omega \in K_\beta^\mathrm{ln}(\alpha^t)$ the weight $\tau_\omega := \mathrm{tr}_\beta\circ \widetilde{E}_\omega : \widetilde{A}_+ \to [0,\infty]$ becomes an $(\widetilde{\alpha}^\gamma,\beta)$-scaling trace. 
\end{lemma}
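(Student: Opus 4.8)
The plan is to check the weight axioms and the three defining conditions of Definition~\ref{D3.2} one at a time, treating the trace property as the only substantial point. The additivity and positive homogeneity of $\tau_\omega$ are inherited at once from the facts that $\widetilde{E}_\omega$ is a positive linear map and $\mathrm{tr}_\beta$ is a weight, and the normalization (iii) is immediate: since $e_1\in\widetilde{A}_0$ we have $\widetilde{E}_\omega(e_1)=e_1$, so $\tau_\omega(e_1)=\mathrm{tr}_\beta(e_1)=1^\beta=1$. For the scaling property (ii) I would first verify that $\widetilde{E}_\omega$ is $\widetilde{\alpha}$-equivariant: on the $\sigma$-weakly total family $\{\pi_{\alpha_n}(a)\lambda(g)\}$ the right-module property of the slice map gives $\widetilde{E}_\omega(\pi_{\alpha_n}(a)\lambda(g))=\omega(a)\lambda(g)$, and combining this with $\widetilde{\alpha}^\gamma(\lambda(g))=\overline{\langle\gamma,g\rangle}\lambda(g)$ and the triviality of $\widetilde{\alpha}^\gamma$ on $\pi_{\alpha_n}(A_n)$ yields $\widetilde{E}_\omega\circ\widetilde{\alpha}^\gamma=\widetilde{\alpha}^\gamma\circ\widetilde{E}_\omega$ first on this family and then everywhere by normality. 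Since $\widetilde{\alpha}^\gamma(e_{\gamma'})=e_{\gamma\gamma'}$ forces $\mathrm{tr}_\beta\circ\widetilde{\alpha}^\gamma=\gamma^\beta\,\mathrm{tr}_\beta$ on $\widetilde{A}_0$, composing the two gives $\tau_\omega\circ\widetilde{\alpha}^\gamma=\gamma^\beta\tau_\omega$. The local normality (i) is then routine: for fixed $x$ the map $y\mapsto\widetilde{E}_\omega(xyx^*)$ is normal on $(\widetilde{A}_n)_+$, being the composition of the normal map $y\mapsto xyx^*$ with the normal $\widetilde{E}_\omega$, and $\mathrm{tr}_\beta$ is normal, so the composition is normal.

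The heart of the matter is traciality. I would reduce it, by polarization and bilinearity, to the single identity $\tau_\omega(xy)=\tau_\omega(yx)$ for $x,y$ drawn from the generating family $\pi_{\alpha_n}(e^{(z)}_{ij})\,e_\gamma$, where $\{e^{(z)}_{ij}\}$ are matrix units of $zA_n=B(\mathcal{H}_z)$ adapted to a diagonalization $\rho_z=\mathrm{diag}(\lambda_1,\lambda_2,\dots)$, so that $\alpha^t(e^{(z)}_{ij})=(\lambda_i/\lambda_j)^{it}\,e^{(z)}_{ij}$. The key commutation relation $\pi_{\alpha_n}(e^{(z)}_{ij})\,e_\gamma=e_{\gamma\lambda_i/\lambda_j}\,\pi_{\alpha_n}(e^{(z)}_{ij})$, read off from the covariance relation and the Fourier formula \eqref{Eq3.1} for $e_\gamma$, together with $\widetilde{E}_\omega(\pi_{\alpha_n}(a)e_\gamma)=\omega(a)e_\gamma$, lets me evaluate both sides explicitly. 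Writing $x=\pi_{\alpha_n}(e^{(z)}_{ij})e_\gamma$ and $y=\pi_{\alpha_n}(e^{(z)}_{kl})e_{\gamma'}$, both $\tau_\omega(xy)$ and $\tau_\omega(yx)$ are supported on the same configuration $j=k$, $i=l$, $\gamma'=\gamma\lambda_i/\lambda_j$, and the desired equality collapses to the numerical relation
\[
\omega(e^{(z)}_{jj})=(\lambda_i/\lambda_j)^\beta\,\omega(e^{(z)}_{ii}).
\]
The crucial observation is that this is precisely the $(\alpha_n^t,\beta)$-KMS boundary condition for $\omega|_{A_n}$ evaluated on the pair $(e^{(z)}_{ij},e^{(z)}_{ji})$, via $\alpha^t(e^{(z)}_{ij})=(\lambda_i/\lambda_j)^{it}e^{(z)}_{ij}$; hence it holds exactly because $\omega\in K_\beta^{\mathrm{ln}}(\alpha^t)$. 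The same $\alpha$-invariance also forces $\omega(e^{(z)}_{il})=0$ for $i\neq l$, which is what makes the off-diagonal contributions drop out. This is the one and only place the KMS hypothesis enters, and it is the conceptual content of the construction: the weighting by $\gamma^\beta$ built into $\mathrm{tr}_\beta$ compensates exactly the modular scaling carried by the eigenvalue ratios, converting the KMS state into a trace.

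The step I expect to be the main obstacle is the passage from the trace identity on generators to traciality on all of $\widetilde{A}$. On each fixed $\widetilde{A}_n$ the generators span a $\sigma$-weakly dense $*$-subalgebra whose $e_{\mathcal{F}}$-compressed elements lie in the definition domain $\mathfrak{m}_{\tau_\omega}$ (cf. the remark following Definition~\ref{D3.2}), so bilinearity together with the local normality from (i) upgrades the identity to all of $\widetilde{A}_n$, showing that $\tau_\omega|_{\widetilde{A}_n}$ is a normal semifinite tracial weight. The remaining care is to propagate this through the inductive limit $\widetilde{A}=\varinjlim\widetilde{A}_n$; I would handle this by observing that the finite elements generating $\mathfrak{m}_{\tau_\omega}$ already lie in $\bigcup_n\widetilde{A}_n$, so that $\tau_\omega(c^*c)=\tau_\omega(cc^*)$ on $\widetilde{A}$ follows from its validity on each $\widetilde{A}_n$ rather than from a fragile norm-limit argument through the unbounded weight.
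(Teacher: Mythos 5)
Your matrix-unit computation is correct as far as it goes, and it is a genuinely different (more elementary) route than the paper takes on each $\widetilde{A}_n$: the commutation relation $\pi_{\alpha_n}(e^{(z)}_{ij})e_\gamma=e_{\gamma\lambda_i/\lambda_j}\pi_{\alpha_n}(e^{(z)}_{ij})$ holds, both $\tau_\omega(xy)$ and $\tau_\omega(yx)$ are supported on the same index configuration, and the surviving identity $\omega(e^{(z)}_{jj})=(\lambda_i/\lambda_j)^\beta\omega(e^{(z)}_{ii})$ is exactly the KMS condition (one caveat: $\alpha$-invariance kills $\omega(e^{(z)}_{il})$, $i\neq l$, only when $\lambda_i\neq\lambda_l$; for degenerate eigenvalues you must instead invoke $\omega|_{zA_n}=\omega(z)\,\mathrm{Tr}(\rho_z^{-\beta}\,\cdot\,)/\mathrm{Tr}(\rho_z^{-\beta})$). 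The paper gets the same conclusion by machinery rather than computation: the dual weight $\tilde\omega$ on $(sA_n)\,\bar{\rtimes}\,G$ has inner modular group $\mathrm{Ad}\,H^{it}$ with $H=\sum_\gamma \gamma^{-\beta}e^{(0)}_\gamma$, so $\tilde\omega(H^{-1}\,\cdot\,)$ is a normal semifinite \emph{trace} by \cite[Theorem VIII.3.14]{Takesaki:Book2}, and this trace is then identified with $\mathrm{tr}_\beta\circ\widetilde{E}_{\omega,n}$.

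The genuine gap is that both of your ``upgrade'' steps are exactly where the analytic difficulty lies, and neither argument you sketch is valid. First, for an \emph{unbounded} weight, traciality does not pass from a $\sigma$-weakly dense $*$-subalgebra $\mathcal{B}\subset\mathfrak{m}_{\tau_\omega}$ to all of $\widetilde{A}_n$ by ``bilinearity together with local normality'': a normal weight is only $\sigma$-weakly lower semicontinuous, so Kaplansky-type approximation yields one-sided inequalities, not equalities; compressions $x\mapsto e_{\mathcal{F}}xe_{\mathcal{F}}$ are not monotone in $\mathcal{F}$; and even the finiteness of $\tau_\omega(c^*pc)$ for a projection $p$ with $\tau_\omega(p)<\infty$ is itself a consequence of the trace property you are trying to establish, so naive corner arguments are circular. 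Closing this requires a real device (the paper's dual-weight theorem, or a careful argument combining the increasing finite-weight projections in $\mathcal{B}$ with lower semicontinuity); it is not routine. Second, your inductive-limit step rests on a false premise: a tracial weight on the $C^*$-algebra $\widetilde{A}$ must satisfy $\tau_\omega(c^*c)=\tau_\omega(cc^*)$ for \emph{every} $c\in\widetilde{A}$ (including when both sides are infinite), and $\widetilde{A}$ is only the norm closure of $\bigcup_n\widetilde{A}_n$ --- neither $\widetilde{A}$ nor $\mathfrak{n}_{\tau_\omega}$ is exhausted by that union, so validity on each $\widetilde{A}_n$ concludes nothing by itself. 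The ``fragile norm-limit argument'' you propose to avoid is precisely what the paper's proof spends its second half doing: approximate $x\in\widetilde{A}$ in norm by $x_k\in\bigcup_n\widetilde{A}_n$, compress by $e_{\mathcal{F}_1},e_{\mathcal{F}_2}$, apply dominated convergence in $\widetilde{A}_0\cong\ell^\infty(\Gamma)$, and then remove the compressions by two \emph{monotone} limits (rewriting $\tau_\omega(e_{\mathcal{F}}xx^*e_{\mathcal{F}})$-type quantities as $\tau_\omega(x^{1/2}e_{\mathcal{F}}x^{1/2})$ so that normality applies). The same defect already affects your item (i): for $x$ outside $\bigcup_n\widetilde{A}_n$ the phrase ``the normal map $y\mapsto xyx^*$ composed with the normal $\widetilde{E}_\omega$'' does not typecheck, since $xyx^*$ lives in the $C^*$-algebra $\widetilde{A}$ where $\widetilde{E}_\omega$ is merely locally normal; there, too, the paper's norm-approximation argument is what does the work.
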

\begin{proof} 
We have to confirm that $\tau_\omega$ satisifies items (i)-(iv) of Definition \ref{D3.2}(1). 

\medskip
We remark that the restriction of $\omega$ to $A_n$ becomes $\sum_{z \in \mathfrak{Z}_n} \omega(z)\,\tau_z^\beta$ (see \cite[Lemma 7.3]{Ueda:preprint20}). We set $s := \sum_{z \in \mathfrak{Z}_n} \mathbf{1}_{(0,1]}(\omega(z))\,z \in \mathcal{Z}(A_n)$, which is the support projection of the restriction of $\omega$ to $A_n$, that is, $\omega$ is faithful on $sA_n$ and identically zero on $(1-s)A_n$. One can easily confirm that $\omega$ enjoys the $(\alpha_n^{-\beta t},-1)$-KMS condition, and hence the restriction of $\alpha_n^{-\beta t}$ to $sA_n$ gives the modular automorphism group associated with the restriction of $\omega$ to $sA_n$ by \cite[Theorem 5.3.10]{BratteliRobinson:Book2}. 

We observe that $\pi_{\alpha_n}(s) = s\otimes1 \in \mathcal{Z}(\widetilde{A}_n)$ and hence $\pi_{\alpha_n}(s)\widetilde{A}_n = (sA_n)\bar{\rtimes}_{\alpha_n}G \subset (sA_n)\,\bar{\otimes}\,B(L^2(G))$ by its construction. We have a bijective $*$-homomorphism $\iota : \pi_{\alpha_n}(s)\widetilde{A}_0 \to \widetilde{A}_0$ sending $\lambda^{(0)}(g) := \pi_{\alpha_n}(s)\lambda(g) = s\otimes\lambda_g$ to $1\otimes\lambda_g = \lambda(g)$ for any $g \in G$. With 
\[
e_\gamma^{(00)} := \int_G \overline{\langle \gamma,g\rangle}\lambda_g\,dg, \qquad \gamma \in \Gamma,
\] 
we observe that the bijective $*$-homomorphism $\iota$ sends $e_\gamma^{(0)} := s\otimes e^{(00)}_\gamma$ to $1\otimes e^{(00)}_\gamma = e_\gamma$ for every $\gamma \in \Gamma$. For a while, we will work with $\pi_{\alpha_n}(s)\widetilde{A}_n = (sA_n)\bar{\rtimes}_{\alpha_n^g}G$ whose generators are $\pi_{\alpha_n}(a)$ ($a \in sA_n$) as well as $\lambda^{(0)}(g)$ ($g \in G$) or $e^{(0)}_\gamma$ ($\gamma \in \Gamma$) along the line of proof of \cite[Theorem 1]{Ueda:AJM16}. 

Let $\tilde{\omega}$ be the dual weight on $(sA_n)\bar{\rtimes}_{\alpha_n^g}G$ constructed out of the restriction of $\omega$ to $sA_n$ (see \cite[Definition X.1.16, Lemma X.1.18]{Takesaki:Book2}), which satisfies that
\[
\tilde{\omega}\Big(\Big(\int_G \lambda^{(0)}(g)\pi_{\alpha_n}(a(g))\,dg\Big)^*\Big(\int_G \lambda^{(0)}(g)\pi_{\alpha_n}(a(g))\,dg\Big)\Big) 
= 
\int_G \omega(a(g)^* b(g))\,dg
\]
for any $\sigma$-strong$^*$-continuous functions $a,b : G \to sA_n$, where $\tilde{\omega}$ extends to its definition domain $\mathfrak{m}_{\tilde{\omega}}$. Moreover, its modular automorphism $\sigma_t^{\tilde{\omega}}$ enjoys that 
\[
\sigma_t^{\tilde{\omega}}(\pi_{\alpha_n}(a)) = \pi_{\alpha_n}(\alpha_n^{-\beta t}(a)), \qquad \sigma_t^{\tilde{\omega}}(\lambda^{(0)}(g)) = \lambda^{(0)}(g)
\]
for all $a \in sA_n$ and $g \in G$. In particular, we obtain $\sigma_t^{\widetilde{\omega}} = \mathrm{Ad}\lambda^{(0)}(-\beta t)$ for every $t \in \mathbb{R}$. Also, we have $\tilde{\omega}(e^{(0)}_\gamma) = \tilde{\omega}(e^{(0)}_\gamma e^{(0)}_\gamma) = \int_G dg = 1$, and hence the restriction of $\tilde{\omega}$ to $\lambda^{(0)}(G)''$ is semifinite. Thus, Takesaki's theorem \cite[Theorem IX.4.2]{Takesaki:Book2} guarantees that there is a unique faithful normal conditional expectation $E : (sA_n) \bar{\rtimes}_{\alpha_n^g}G \to \lambda^{(0)}(G)''$ with $\tilde{\omega}\circ E = \tilde{\omega}$. Then  
\begin{align*}
\tilde{\omega}(E(\pi_{\alpha_n}(a)) e^{(0)}_\gamma) 
=
\tilde{\omega}\circ E(e^{(0)}_\gamma \pi_{\alpha_n}(a) e^{(0)}_\gamma) 
=
\tilde{\omega}(e^{(0)}_\gamma\pi_{\alpha_n}(a) e^{(0)}_\gamma) 
=
\int_G \omega(a)\,dg 
= 
\omega(a)\,\tilde{\omega}(e^{(0)}_\gamma),
\end{align*}
implying that $E(\pi_{\alpha_n}(a)) = \omega(a)1$ for every $a \in sA_n$ because $\tilde{\omega}(e^{(0)}_\gamma) = 1$. Since 
\[
\lambda^{(0)}(-\beta t) = \sum_{\gamma \in \Gamma} \langle\gamma,-\beta t\rangle e^{(0)}_\gamma = \sum_{\gamma \in \Gamma} \gamma^{i(-\beta t)}\,e^{(0)}_\gamma 
= \Big(\sum_{\gamma\in\Gamma}\gamma^{-\beta}\,e^{(0)}_\gamma\Big)^{it} =: H^{it} 
\]
($H$ is a non-singular positive self-adjoint operator affiliated with $\lambda^{(0)}(G)''$), \cite[Theorem VIII.3.14]{Takesaki:Book2} and its proof show that a semifinite normal tracial weight on $(sA_n)\bar{\rtimes}_{\alpha_n^g}G$ can be defined to be $\tilde{\omega}(H^{-1}(\,\cdot\,))$ (which needs some justification; see \cite[Lemma VIII.2.8]{Takesaki:Book2}). Then we can easily verify $\tilde{\omega}(H^{-1}E(\,\cdot\,)) = \tilde{\omega}(H^{-1}(\,\cdot\,))$, since $H$ is affiliated with $\lambda^{(0)}(G)''$. We observe that $H^{-1}e^{(0)}_\gamma = \gamma^\beta\,e^{(0)}_\gamma$ and hence $\tilde{\omega}(H^{-1}e^{(0)}_\gamma) = \gamma^\beta\,\tilde{\omega}(e^{(0)}_\gamma) = 
\gamma^\beta$ for every $\gamma \in \Gamma$. 

Since
\[
\widetilde{E}_{\omega,n}(\pi_{\alpha_n}(a)\lambda(g)) = \omega(a)\lambda(g) = \omega(sa)\iota(\lambda^{(0)}(g)) = \iota(E(\pi_{\alpha_n}(s)\pi_{\alpha_n}(a)\lambda^{(0)}(g)))
\] 
for any $a \in A_n$ and $g \in G$, we have $\widetilde{E}_{\omega,n}(x) = \iota(E(\pi_{\alpha_n}(s)x))$ for every $x \in \widetilde{A}_n$. Since $\mathrm{tr}_\beta(\iota(e^{(0)}_\gamma)) = \mathrm{tr}_\beta(e_\gamma) = \gamma^\beta = \tilde{\omega}(H^{-1} e^{(0)}_\gamma)$ for every $\gamma \in \Gamma$, we also have $\mathrm{tr}_\beta\circ\iota = \tilde{\omega}(H^{-1}(\,\cdot\,))$ on $(\widetilde{A}_0)_+$. Therefore, we obtain 
\[
\mathrm{tr}_\beta\circ\widetilde{E}_{\omega,n}(x) = \mathrm{tr}_\beta(\iota(E(\pi_{\alpha_n}(s)x))) = \tilde{\omega}(H^{-1}E(\pi_{\alpha_n}(s)x)) = \tilde{\omega}(H^{-1}\pi_{\alpha_n}(s)x)
\] 
for every $x \in (\widetilde{A}_n)_+$. Since $\tau_\omega$ coincides with $\mathrm{tr}_\beta\circ\widetilde{E}_{\omega,n}$ on $\widetilde{A}_n$, it must be a normal semifinite tracial weight on $\widetilde{A}_n$. 

\medskip
Let $x \in \widetilde{A}$ be arbitrarily chosen. Choose a sequence $x_k \in \bigcup_{n\geq0}\widetilde{A}_n$ in such a way that $\Vert x_k - x\Vert \to 0$ as $k\to\infty$. 

For any net $y_\lambda \nearrow y$ in $(\widetilde{A}_n)_+$ we have 
\begin{align*} 
\limsup_\lambda |\phi(E_\omega(x y_\lambda x^*) - E_\omega(x y x^*))| 
\leq 2\Vert\phi\Vert\,\Vert y\Vert\,(\Vert x\Vert + \Vert x_k\Vert)\,\Vert x_k - x\Vert \overset{k\to\infty}{\to} 0
\end{align*}
for every normal linear functional $\phi$ on $\widetilde{A}_0$, since the $x_k y_\lambda x_k^*$ and $x_k y x_k^*$ fall into some $\widetilde{A}_m$ with $m \geq n$ for a fixed $k$ and since the restriction of $E_\omega$ to $\widetilde{A}_m$ is normal. Hence we conclude that $E_\omega(x y_\lambda x^*) \nearrow E_\omega(xyx^*)$, that is,  $y \in \widetilde{A}_0 \mapsto E_\omega(xyx^*) \in \widetilde{A}_0$ is a normal map. It follows that $\tau_\omega = \mathrm{tr}_\beta\circ E_\omega$ enjoys item (i) thanks to the normality of $\mathrm{tr}_\beta$. 

Let $\mathcal{F}_1,\mathcal{F}_2 \Subset \Gamma$ be arbitrarily given. For each $k$, $e_{\mathcal{F}_2}x_k e_{\mathcal{F}_1}$ falls in some $\widetilde{A}_n$, and what we have proved above shows that $\tau_\omega(e_{\mathcal{F}_1}x_k^* e_{\mathcal{F}_2} x_k e_{\mathcal{F}_1}) 
= 
\tau_\omega(e_{\mathcal{F}_2} x_k e_{\mathcal{F}_1} x_k^* e_{\mathcal{F}_2})$,
since $\tau_\omega$ coincides with $\mathrm{tr}_\beta\circ \widetilde{E}_{\omega,n}$ on $\widetilde{A}_n$. By the dominated convergence theorem (n.b., $\widetilde{A}_0 \cong \ell^\infty(\Gamma)$ is pointed out before), we have  
\begin{align*}
\tau_\omega(e_{\mathcal{F}_1}x_k^* e_{\mathcal{F}_2} x_k e_{\mathcal{F}_1}) 
= 
\mathrm{tr}_\beta(\widetilde{E}_\omega(x_k^* e_{\mathcal{F}_2} x_k) e_{\mathcal{F}_1}) 
&\to 
\mathrm{tr}_\beta(\widetilde{E}_\omega(x^* e_{\mathcal{F}_2} x) e_{\mathcal{F}_1}) = 
\tau_\omega(e_{\mathcal{F}_1}x^* e_{\mathcal{F}_2} x e_{\mathcal{F}_1}),  \\
\tau_\omega(e_{\mathcal{F}_2}x_k e_{\mathcal{F}_1} x_k^* e_{\mathcal{F}_2}) 
=
\mathrm{tr}_\beta(\widetilde{E}_\omega(x_k e_{\mathcal{F}_1} x_k^*) e_{\mathcal{F}_2}) 
&\to 
\mathrm{tr}_\beta(\widetilde{E}_\omega(x e_{\mathcal{F}_1} x^*) e_{\mathcal{F}_2}) 
= 
\tau_\omega(e_{\mathcal{F}_2}x e_{\mathcal{F}_1} x^* e_{\mathcal{F}_2})
\end{align*} 
as $k\to\infty$. Consequently, we obtain that $\tau_\omega(e_{\mathcal{F}_1}x^* e_{\mathcal{F}_2} x e_{\mathcal{F}_1}) = \tau_\omega(e_{\mathcal{F}_2}x e_{\mathcal{F}_1} x^* e_{\mathcal{F}_2})$ for any $\mathcal{F}_1,\mathcal{F}_2 \Subset \Gamma$. 

By the normality of $\mathrm{tr}_\beta$, we have 
\[
\tau_\omega(e_{\mathcal{F}_1}x^* e_{\mathcal{F}_2} x e_{\mathcal{F}_1}) = \mathrm{tr}_\beta(\widetilde{E}_\omega(x^* e_{\mathcal{F}_2} x) e_{\mathcal{F}_1}) \nearrow \mathrm{tr}_\beta(\widetilde{E}_\omega(x^* e_{\mathcal{F}_2} x)) = \tau_\omega(x^* e_{\mathcal{F}_2} x)
\]
as $\mathcal{F}_1 \nearrow \Gamma$. On the other hand, we have, by item (i), $\tau_\omega(e_{\mathcal{F}_2}x e_{\mathcal{F}_1} x^* e_{\mathcal{F}_2}) \nearrow \tau_\omega(e_{\mathcal{F}_2}xx^* e_{\mathcal{F}_2})$ as $\mathcal{F}_1 \nearrow \Gamma$. Hence $\tau_\omega(x^* e_{\mathcal{F}_2} x) = \tau_\omega(e_{\mathcal{F}_2}xx^* e_{\mathcal{F}_2})$ for any $\mathcal{F}_2 \Subset \Gamma$. Similarly,  taking the limit as $\mathcal{F}_2 \nearrow \Gamma$ we obtain $\tau_\omega(x^* x) = \tau_\omega(xx^*)$. Hence $\tau_\omega$ is a tracial weight. 

\medskip
We have 
\begin{align*}
\widetilde{E}_\omega\circ\widetilde{\alpha}^\gamma(\pi_\alpha(a)\lambda(g)) 
&= 
\overline{\langle \gamma,g\rangle}\,\widetilde{E}_\omega(\pi_\alpha(a)\lambda(g)) 
= 
\overline{\langle \gamma,g\rangle}\,\widetilde{E}_{\omega,n}(\pi_{\alpha_n}(a)\lambda(g)) \\
&= 
\overline{\langle \gamma,g\rangle}\,\omega(a)\,\lambda(g) 
= 
\widetilde{\alpha}^\gamma(\widetilde{E}_{\omega,n}(\pi_{\alpha_n}(a)\lambda(g))) 
= 
\widetilde{\alpha}^\gamma\circ\widetilde{E}_\omega(\pi_\alpha(a)\lambda(g))
\end{align*}
for any $a \in A_n$ and $g \in G$. Hence we obtain $\widetilde{E}_\omega\circ\widetilde{\alpha}^\gamma = \widetilde{\alpha}^\gamma\circ\widetilde{E}_\omega$ for every $\gamma \in \Gamma$. Moreover, we observe that 
$
\mathrm{tr}_\beta\circ\widetilde{\alpha}^\gamma(e_{\gamma'}) = \mathrm{tr}_\beta(e_{\gamma\gamma'}) = \gamma^\beta \gamma'{}^\beta = \gamma^\beta\,\mathrm{tr}_\beta(e_{\gamma'})
$
for all $\gamma, \gamma' \in \Gamma$. Therefore, we obtain that $\mathrm{tr}_\beta\circ\widetilde{\alpha}^\gamma = \gamma^\beta\,\mathrm{tr}_\beta$, and thus $\tau_\omega$ satisfies item (ii). Item (iii) is trivial by Definition \ref{D3.2}(2). 
\end{proof}

\begin{lemma}\label{L3.5} For each $\tau \in TW_\beta^\mathrm{ln}(\widetilde{\alpha}^\gamma)$ the mapping 
\[
a \in A_+ \mapsto \tau(e_1\pi_\alpha(a)) = \tau(\pi_\alpha(a)e_1) = \tau(e_1\pi_\alpha(a)e_1) \in [0,\infty)
\]
extends to the whole $A$ and defines an element of $K_\beta^\mathrm{ln}(\alpha^t)$. 
\end{lemma}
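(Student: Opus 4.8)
The plan is to dispose of the elementary points first and then isolate the KMS condition as the real content. Since $\tau(e_1)=1<\infty$ we have $e_1\in\mathfrak{m}_\tau$, so the trace property together with $e_1^2=e_1$ yields the three displayed equalities at once: $\tau(e_1\pi_\alpha(a))=\tau(\pi_\alpha(a)e_1)$, and $\tau(e_1\pi_\alpha(a)e_1)=\tau(e_1^2\pi_\alpha(a))=\tau(e_1\pi_\alpha(a))$. For $a\in A_+$ one has $0\le e_1\pi_\alpha(a)e_1=(\pi_\alpha(a)^{1/2}e_1)^*(\pi_\alpha(a)^{1/2}e_1)\le\Vert a\Vert\,e_1$, whence $\omega(a):=\tau(e_1\pi_\alpha(a)e_1)\in[0,\Vert a\Vert]$. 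Thus $\omega$ is finite, additive and positively homogeneous on $A_+$, so it extends to a bounded positive linear functional on $A$; since $\pi_\alpha$ is unital, $\omega(1)=\tau(e_1)=1$ and $\omega$ is a state. Local normality is then immediate: applying the normality assumption (i) of Definition \ref{D3.2}(1) with $x=e_1$ shows that $y\in(\widetilde{A}_n)_+\mapsto\tau(e_1 y e_1)$ is normal, and precomposing with the normal $*$-homomorphism $\pi_{\alpha_n}:A_n\to\widetilde{A}_n$ gives normality of $\omega|_{A_n}$ for every $n$.

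It remains to verify that $\omega$ is $(\alpha^t,\beta)$-KMS. Because $\alpha^t$ is inductive and each $\omega|_{A_n}$ is a normal state, it suffices to check the KMS relation $\omega(a\,\alpha_n^{i\beta}(b))=\omega(ba)$ on the $u$-dense $*$-subalgebra of $A_n$ spanned by the matrix units $f_{ij}$ diagonalizing the various $\rho_z$, $z\in\mathfrak{Z}_n$; these are entire $\alpha_n$-eigenvectors, so by linearity I may take $a=a^{(\mu)}$, $b=b^{(\nu)}$ with $\alpha_n^t(a^{(\mu)})=\mu^{it}a^{(\mu)}$ and $\alpha_n^t(b^{(\nu)})=\nu^{it}b^{(\nu)}$, $\mu,\nu\in\Gamma$. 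The key structural input is the commutation relation coming from the covariance $\lambda(g)\pi_{\alpha_n}(c)=\pi_{\alpha_n}(\alpha_n^g(c))\lambda(g)$ and the extension of $\alpha_n^t$ to $G$: for an eigenvector $c$ with $\alpha_n^g(c)=\langle\chi,g\rangle c$, integrating against $\overline{\langle\gamma,g\rangle}$ gives $e_\gamma\pi_\alpha(c)=\pi_\alpha(c)\,e_{\gamma\chi^{-1}}$, equivalently $\pi_\alpha(c)\,e_\gamma=e_{\gamma\chi}\pi_\alpha(c)$.

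Writing $P=\pi_\alpha(a^{(\mu)})$, $Q=\pi_\alpha(b^{(\nu)})$ and $T:=\tau(e_1 PQ)$, I would establish three identities. First, $\alpha_n^{i\beta}(b^{(\nu)})=\nu^{-\beta}b^{(\nu)}$ gives $\omega(a^{(\mu)}\alpha_n^{i\beta}(b^{(\nu)}))=\nu^{-\beta}\tau(e_1\pi_\alpha(a^{(\mu)}b^{(\nu)}))=\nu^{-\beta}T$. Second, using $Pe_1=e_\mu P$, the trace property, and $\tau\circ\widetilde{\alpha}^\mu=\mu^\beta\tau$ with $\widetilde{\alpha}^\mu(e_1 PQ)=e_\mu PQ$, one finds $\omega(b^{(\nu)}a^{(\mu)})=\tau(e_1 QP)=\tau(Pe_1 Q)=\tau(e_\mu PQ)=\mu^\beta T$. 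Third, since $PQ=\pi_\alpha(a^{(\mu)}b^{(\nu)})$ has $\alpha_n$-degree $\mu\nu$, so that $e_1 PQ=PQ\,e_{(\mu\nu)^{-1}}$, the same trace-plus-scaling argument yields $T=\tau(e_{(\mu\nu)^{-1}}PQ)=(\mu\nu)^{-\beta}T$, i.e.\ $(1-(\mu\nu)^{-\beta})T=0$. Combining, $\omega(a^{(\mu)}\alpha_n^{i\beta}(b^{(\nu)}))-\omega(b^{(\nu)}a^{(\mu)})=(\nu^{-\beta}-\mu^\beta)T=-\mu^\beta(1-(\mu\nu)^{-\beta})T=0$. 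Hence $\omega|_{A_n}$ is $(\alpha_n^t,\beta)$-KMS, and therefore $\omega\in K_\beta^{\mathrm{ln}}(\alpha^t)$.

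The main obstacle is that I am working with a tracial \emph{weight} rather than a trace: every application of $\tau(XY)=\tau(YX)$ must be justified by checking that one factor lies in the definition domain $\mathfrak{m}_\tau$. This is exactly guaranteed here because each product is flanked by one of the finite-trace projections $e_\gamma$, which lie in $\mathfrak{m}_\tau$ (a two-sided ideal); this is why the eigenvector calculus, rather than a naive manipulation, is the right vehicle. The second delicate point is that the entire exponent algebra — the relation $e_\gamma\pi_\alpha(c)=\pi_\alpha(c)e_{\gamma\chi^{-1}}$ and the value $\alpha_n^{i\beta}(b^{(\nu)})=\nu^{-\beta}b^{(\nu)}$ — rests on the extension of each flow $\alpha_n^t$ to the compact group $G$ via the spectral decomposition of the $\rho_z$, so I would be careful to let $c$ range only over genuine entire eigenvectors and to note that the computation above is precisely the local Gibbs relation $\omega(z\,x)=\omega(z)\,\tau_z^\beta(x)$ in disguise; the passage from this $u$-dense eigenvector computation to the full KMS condition is then a soft consequence of the local normality established at the outset.
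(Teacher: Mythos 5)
Your proposal is correct, but it is organized quite differently from the paper's proof, so a comparison is worthwhile. The paper fixes an arbitrary $\alpha_n^t$-analytic $a$ and an \emph{arbitrary} $b\in A_n$, inserts $\sum_{\gamma\in\Gamma}e_\gamma$, and pushes $e_1$ through $\pi_{\alpha_n}(a)$ via the Fourier-component maps $E^{(n)}_\gamma(a)=\int_G\overline{\langle\gamma,g\rangle}\,\alpha_n^g(a)\,dg$ (identities \eqref{Eq3.5}--\eqref{Eq3.8}), so that the scaling property and traciality applied termwise turn $\tau(e_1\pi_{\alpha_n}(ab))$ into $\tau(\pi_{\alpha_n}(b\,\alpha_n^{i\beta}(a))e_1)$; the reduction to analytic elements is the quoted Phragm\'{e}n--Lindel\"{o}f step. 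You instead test the identity only on single $\alpha_n$-eigenvectors, where every $\Gamma$-sum collapses to one term, and you close with the cancellation $T=(\mu\nu)^{-\beta}T$ (so either $(\mu\nu)^{\beta}=1$ or $T=0$), which is a genuinely different and rather slick way to see the identity; your computation of $\tau(e_1QP)=\mu^\beta T$ is exactly the single-eigenvector shadow of the paper's chain, and your bookkeeping of the trace flips (always flanked by some $e_\gamma\in\mathfrak{m}_\tau$, a two-sided ideal) is sound. What your route buys is the absence of any infinite summation over $\Gamma$ in the main computation; what it costs is density: the span of eigenvector matrix units is only $\sigma$-weakly dense in $A_n$ (not norm-dense, and not ``$u$-dense'' --- the $u$-topology pertains to automorphisms, not elements), so the standard principle that KMS may be checked on a norm-dense invariant $*$-subalgebra of analytic elements does not apply here. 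Your repair is the right one --- the eigenvector relations force $\omega$ to vanish on off-diagonal matrix units and to agree with $\omega(z)\,\tau_z^\beta$ on the diagonal ones, hence $\omega|_{zA_n}=\omega(z)\,\tau_z^\beta$ by the local normality you established first, so that $\omega|_{A_n}=\sum_{z\in\mathfrak{Z}_n}\omega(z)\,\tau_z^\beta$ is a normal countable convex combination of $(\alpha_n^t,\beta)$-KMS states and therefore KMS --- but this is the load-bearing step of your argument, not a ``soft consequence,'' and it should be written out rather than gestured at. On the elementary points (statehood, $\omega(1)=\tau(e_1)=1$, local normality from item (i) of Definition~\ref{D3.2}(1) with $x=e_1$), your write-up is complete and in fact more explicit than the paper's.
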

\begin{proof}
Since $\tau(e_1) < +\infty$, $\tau(e_1 \pi_{\alpha_n}(a)) = \tau(\pi_{\alpha_n}(a)e_1) = \tau(e_1 \pi_{\alpha_n}(a) e_1)$ makes sense for all $a \in A$. By the standard Phragmen--Lindel\"{o}f method, it suffices to show that $\tau(e_1 \pi_{\alpha_n}(ab)) = \tau(\pi_{\alpha_n}(b\alpha_n^{i\beta}(a))e_1)$ ($=\tau(e_1\pi_{\alpha_n}(b\alpha_n^{i\beta}(a)))$) for any $\alpha_n^t$-analytic $a \in A_n$ and any $b \in A_n$. 

For each $\gamma \in \Gamma$ we define $E_\gamma^{(n)} : A_n \to A_n$ by 
\[
E_\gamma^{(n)}(a) := \int_G \overline{\langle \gamma,g\rangle}\, \alpha_n^g(a)\,dg, \qquad a \in A_n. 
\]
Then we have 
\begin{equation}\label{Eq3.5}
E_\gamma^{(n)}(a)^* = E_{\gamma^{-1}}^{(n)}(a^*)
\end{equation}
for every $a \in A_n$. Remark that $E_\gamma^{(n)}(\alpha_n^t(a)) = \gamma^{it}\,E_\gamma^{(n)}(a)$ for every $a \in A_n$, and moreover that $z  \mapsto E_\gamma^{(n)}(\alpha_n^z(a))$ is entire for every $\alpha_n^t$-analytic $a \in A_n$ ({\it n.b.,} this can easily be confirmed by using \cite[appendix A1]{Takesaki:Book2}). By the unicity theorem in complex analysis we conclude that 
\begin{equation}\label{Eq3.6}
\gamma^{-\beta}E_\gamma^{(n)}(a) = E_\gamma^{(n)}(\alpha_n^{i\beta}(a))
\end{equation} 
for every $\alpha_n^t$-analytic $a \in A_n$. We also observe that 
\begin{equation}\label{Eq3.7}
e_1\pi_{\alpha_n}(a)e_\gamma = \pi_{\alpha_n}(E_{\gamma^{-1}}^{(n)}(a))e_\gamma
\end{equation}
for every $a \in A_n$. Taking the adjoint of this identity together with \eqref{Eq3.5} we also have
\begin{equation}\label{Eq3.8}
e_\gamma \pi_{\alpha_n}(a) e_1 = e_\gamma \pi_{\alpha_n}(E_\gamma^{(n)}(a))
\end{equation}
for every $a \in A_n$. 

Let $a \in A_n$ be an arbitrary $\alpha_n^t$-analytic element, and $b \in A_n$ be an arbitrary element of $A_n$. Then we have
\begin{align*} 
\tau(e_1 \pi_{\alpha_n}(ab)) 
&= 
\tau(e_1\pi_{\alpha_n}(a)\pi_{\alpha_n}(b)e_1) 
=
\tau(\pi_{\alpha_n}(b) e_1 \pi_{\alpha_n}(a)) \qquad \text{(trace property)}\\
&= 
\sum_{\gamma \in \Gamma} \tau(\pi_{\alpha_n}(b) e_1 \pi_{\alpha_n}(a) e_\gamma) \\
&= 
\sum_{\gamma \in \Gamma} \tau(\pi_{\alpha_n}(b E_{\gamma^{-1}}^{(n)}(a)) e_\gamma) \qquad\quad \text{(use \eqref{Eq3.7})}\\
&= 
\sum_{\gamma \in \Gamma} \tau\circ\widetilde{\alpha}^\gamma(\pi_{\alpha_n}(b E_{\gamma^{-1}}^{(n)}(a)) e_1) \quad \text{(use \eqref{Eq3.3})}\\
&= 
\sum_{\gamma \in \Gamma} \gamma^\beta\,\tau(\pi_{\alpha_n}(b E_{\gamma^{-1}}^{(n)}(a)) e_1) \quad \text{(use item (ii) in Defition \ref{D3.2}(1))}\\
&= 
\sum_{\gamma \in \Gamma} \tau(\pi_{\alpha_n}(b (\gamma^\beta\,E_{\gamma^{-1}}^{(n)}(a))) e_1) \\
&= 
\sum_{\gamma \in \Gamma} \tau(\pi_{\alpha_n}(b E_{\gamma^{-1}}^{(n)}(\alpha_n^{i\beta}(a)) e_1) \qquad\quad \text{(use \eqref{Eq3.6})}\\
&=
\sum_{\gamma \in \Gamma} \tau(\pi_{\alpha_n}(b) e_{\gamma^{-1}}\pi_{\alpha_n}(\alpha_n^{i\beta}(a)) e_1) \quad\ \text{(use \eqref{Eq3.8})}\\
&=
\tau(\pi_{\alpha_n}(b\alpha_n^{i\beta}(a)) e_1)
\end{align*}
Hence we are done.
\end{proof} 

So far, we have constructed two maps 
\begin{equation}\label{Eq3.9}
\begin{aligned}
\omega \in K_\beta^\mathrm{ln}(\alpha^t) 
&\mapsto 
\tau_\omega = \mathrm{tr}_\beta\circ \widetilde{E}_\omega \in TW_\beta^\mathrm{ln}(\widetilde{\alpha}^\gamma), \\
\tau \in TW_\beta^\mathrm{ln}(\widetilde{\alpha}^\gamma) 
&\mapsto 
(a \mapsto \omega_\tau(a):=\tau(e_1 \pi_\alpha(a))) \in K_\beta^\mathrm{ln}(\alpha^t). 
\end{aligned}
\end{equation}
Since $\tau_\omega(e_1 \pi_\alpha(a)) = \omega(a)$ for all $a \in A$, it follows that the first map in \eqref{Eq3.9} is injective. We also remark that $\omega_\tau$ in \eqref{Eq3.9} makes sense on the whole $A$ since $\tau(e_1)<+\infty$.

\begin{lemma}\label{L3.6} 
We have $\tau = \tau_{\omega_\tau}$ for every $\tau \in TW_\beta^\mathrm{ln}(\widetilde{\alpha}^\gamma)$. 
\end{lemma}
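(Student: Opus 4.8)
The plan is to prove that both $\tau$ and $\tau_{\omega_\tau}=\mathrm{tr}_\beta\circ\widetilde{E}_{\omega_\tau}$ admit a common diagonal expansion against the mutually orthogonal projections $e_\gamma$ ($\gamma\in\Gamma$), and that their diagonal entries coincide; since $\mathrm{tr}_\beta\circ\widetilde{E}_{\omega_\tau}$ is again an element of $TW_\beta^\mathrm{ln}(\widetilde{\alpha}^\gamma)$ by Lemma \ref{L3.4}, the two weights then agree. First I would establish, for \emph{any} $\tau'\in TW_\beta^\mathrm{ln}(\widetilde{\alpha}^\gamma)$ and any $x\in\widetilde{A}_+$, the summation formula $\tau'(x)=\sum_{\gamma\in\Gamma}\tau'(e_\gamma x e_\gamma)$. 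The key point is that each $e_\gamma$ lies in the definition domain (as $\tau'(e_\gamma)=\gamma^\beta<+\infty$), so the trace property forces all off-diagonal contributions to vanish: $\tau'(e_\gamma x e_{\gamma'})=\tau'(x e_{\gamma'}e_\gamma)=\delta_{\gamma,\gamma'}\tau'(e_\gamma x e_\gamma)$, and likewise $\tau'(e_\gamma x)=\tau'(e_\gamma x e_\gamma)$ because $\tau'(e_\gamma x(1-e_\gamma))=\tau'((1-e_\gamma)e_\gamma x)=0$.

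Next I would supply the convergence through the local normality axiom (i). Writing $e_\mathcal{F}=\sum_{\gamma\in\mathcal{F}}e_\gamma\in(\widetilde{A}_0)_+$, which increases to $1$ as $\mathcal{F}\nearrow\Gamma$, axiom (i) applied with the self-adjoint element $x^{1/2}\in\widetilde{A}$ says that $y\mapsto\tau'(x^{1/2}y\,x^{1/2})$ is normal on $(\widetilde{A}_0)_+$, whence $\tau'(x^{1/2}e_\mathcal{F}x^{1/2})\nearrow\tau'(x)$. The trace property turns the left side into $\tau'(e_\mathcal{F}x)=\tau'(e_\mathcal{F}x e_\mathcal{F})=\sum_{\gamma\in\mathcal{F}}\tau'(e_\gamma x e_\gamma)$, and taking the limit gives the asserted formula. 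Applying it to both $\tau$ and $\tau_{\omega_\tau}$ reduces the whole statement to the equality $\tau(e_\gamma x e_\gamma)=\tau_{\omega_\tau}(e_\gamma x e_\gamma)$ for every $\gamma\in\Gamma$ and every $x\in\widetilde{A}_+$.

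To obtain that corner-wise equality, I would observe that on $e_\gamma\widetilde{A}e_\gamma$ both weights restrict to bounded positive functionals (dominated by $\gamma^\beta\Vert\cdot\Vert$, since $y\le\Vert y\Vert e_\gamma$ there), so it suffices to match them on the norm-dense subalgebra $\bigcup_n e_\gamma\widetilde{A}_n e_\gamma$. Using $\lambda(g)e_\gamma=\langle\gamma,g\rangle e_\gamma$ one checks that this corner is generated by the elements $e_\gamma\pi_{\alpha_n}(a)e_\gamma$, $a\in A_n$. On such an element the scaling axiom does the computation for $\tau$: since $\widetilde{\alpha}^{\gamma^{-1}}$ fixes $\pi_\alpha(A)$ and sends $e_\gamma$ to $e_1$ by \eqref{Eq3.3}, axiom (ii) yields $\tau(e_\gamma\pi_{\alpha_n}(a)e_\gamma)=\gamma^\beta\,\tau(e_1\pi_\alpha(a)e_1)=\gamma^\beta\,\omega_\tau(a)$. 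For $\tau_{\omega_\tau}$, the $\widetilde{A}_0$-bimodule property of the conditional expectation together with $\widetilde{E}_{\omega_\tau}(\pi_{\alpha_n}(a))=\omega_\tau(a)1$ gives $\widetilde{E}_{\omega_\tau}(e_\gamma\pi_{\alpha_n}(a)e_\gamma)=\omega_\tau(a)e_\gamma$, so $\tau_{\omega_\tau}(e_\gamma\pi_{\alpha_n}(a)e_\gamma)=\omega_\tau(a)\,\mathrm{tr}_\beta(e_\gamma)=\gamma^\beta\,\omega_\tau(a)$. These agree, so the two bounded functionals coincide on the generators, hence on all of $e_\gamma\widetilde{A}e_\gamma$, and the summation formula then propagates the equality to every $x\in\widetilde{A}_+$.

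I expect the main obstacle to be the rigorous justification of the diagonal summation formula for the possibly unbounded weight: the naive truncation $e_\mathcal{F}x e_\mathcal{F}$ is \emph{not} monotone in $\mathcal{F}$, so one cannot invoke normality directly on it. The device of routing through $x^{1/2}e_\mathcal{F}x^{1/2}$ (which \emph{is} monotone, via the genuinely increasing net $e_\mathcal{F}\nearrow1$) and then using the trace identities to recover $\sum_{\gamma\in\mathcal{F}}\tau'(e_\gamma x e_\gamma)$ is the crux; everything else is a matter of the generator computation and a routine norm-density argument on the corners.
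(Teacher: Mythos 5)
Your proposal is correct and is essentially the paper's own argument: both proofs rest on evaluating the two weights on compressed generators, where the scaling axiom (ii) together with \eqref{Eq3.3} gives $\tau(e_\gamma\pi_{\alpha_n}(a)e_\gamma)=\gamma^\beta\,\omega_\tau(a)$ and the $\widetilde{A}_0$-bimodule property of $\widetilde{E}_{\omega_\tau}$ gives the same value for $\tau_{\omega_\tau}$, and then on exhausting by $e_\mathcal{F}\nearrow 1$ via the monotone $x^{1/2}e_\mathcal{F}x^{1/2}$ device justified by the trace property and axiom (i). The only differences are organizational and minor: the paper takes the limit $\mathcal{F}\nearrow\Gamma$ asymmetrically (using normality of $\mathrm{tr}_\beta$ on the $\tau_{\omega_\tau}$ side) instead of your symmetric diagonal summation formula (which needs Lemma \ref{L3.4} to place $\tau_{\omega_\tau}$ in $TW_\beta^\mathrm{ln}(\widetilde{\alpha}^\gamma)$), and your claim that the corner $e_\gamma\widetilde{A}_ne_\gamma$ is ``generated'' by the $e_\gamma\pi_{\alpha_n}(a)e_\gamma$ should be backed either by $\sigma$-weak density of the span of the $\pi_{\alpha_n}(a)\lambda(g)$ combined with the normality coming from axiom (i), or by the observation that $e_\gamma\pi_{\alpha_n}(a)e_\gamma=\pi_{\alpha_n}\bigl(\int_G\alpha_n^g(a)\,dg\bigr)e_\gamma$ so the corner coincides exactly with the compression of the fixed-point algebra --- the same level of terseness as the paper's ``it follows''.
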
 
\begin{proof}
For any $a\in A_+$, $g \in G$ and $\gamma \in \Gamma$ we have 
\begin{align*}
\tau(\pi_\alpha(a)\lambda(g) e_\gamma) 
&=
\tau(\pi_\alpha(a)\,\langle \gamma,g\rangle\,e_\gamma) 
=
\langle \gamma,g\rangle\,\tau(e_\gamma \pi_\alpha(a)e_\gamma) \\
&=
\langle \gamma,g\rangle\,\tau\circ\widetilde{\alpha}^\gamma(e_1 \pi_\alpha(a)e_1) 
=
\langle \gamma,g\rangle\,\gamma^\beta\,\tau(e_1 \pi_\alpha(a)e_1) \\ 
&=
\langle \gamma,g\rangle\,\gamma^\beta\,\omega_\tau(a)  
=
\langle \gamma,g\rangle\,\omega_\tau(a)\,\mathrm{tr}_\beta(e_\gamma) 
= 
\mathrm{tr}_\beta(\widetilde{E}_{\omega_\tau}(\pi_\alpha(a))\lambda(g)e_\gamma).
\end{align*}
It follows that $\tau(x e_\gamma) = \tau_{\omega_\tau}(x e_\gamma)$ holds for any $x \in \widetilde{A}$ and $\gamma \in \Gamma$. Therefore, we have 
$\tau(x e_\mathcal{F}) = \tau_{\omega_\tau}(x e_\mathcal{F})$ for any $x \in \widetilde{A}$ and any finite $\mathcal{F} \Subset \Gamma$. By the trace property together with $\tau(e_\mathcal{F}) < +\infty$ we have, by item (i) of Definition \ref{D3.2}(1),  
\[
\tau(x e_\mathcal{F}) = \tau(e_\mathcal{F} x e_\mathcal{F}) = \tau(x^{1/2}e_\mathcal{F} x^{1/2}) \nearrow \tau(x)
\]
as $\mathcal{F} \nearrow \Gamma$ for every $x \in \widetilde{A}_+$. We also have $\tau_{\omega_\tau}(xe_\mathcal{F}) = \mathrm{tr}_\beta(\widetilde{E}_{\omega_\tau}(x)e_\mathcal{F}) \nearrow \mathrm{tr}_\beta(\widetilde{E}_{\omega_\tau}(x)) = \tau_{\omega_\tau}(x)$ as $\mathcal{F} \nearrow \Gamma$ for every $x \in \widetilde{A}_+$. We conclude that $\tau = \tau_{\omega_\tau}$ holds. 
\end{proof}

Summing up the discussions so far we have obtained the following theorem: 

\begin{theorem}\label{T3.7} 
The maps in \eqref{Eq3.9} are inverse to each other. Therefore, $K_\beta^\mathrm{ln}(\alpha^t)$ and $TW_\beta^\mathrm{ln}(\widetilde{\alpha}^\gamma)$ are affine-isomorphic. 
\end{theorem}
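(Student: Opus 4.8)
The plan is to read the theorem off from the three facts already assembled, so that the argument reduces to two formalities. Lemmas \ref{L3.4} and \ref{L3.5} show that the two assignments in \eqref{Eq3.9} take values in $TW_\beta^\mathrm{ln}(\widetilde{\alpha}^\gamma)$ and in $K_\beta^\mathrm{ln}(\alpha^t)$ respectively, so it only remains to check (a) that the two maps are mutually inverse and (b) that each is affine. I would treat (a) first, since both composition identities are essentially in hand. Writing $\Phi(\omega)=\tau_\omega$ and $\Psi(\tau)=\omega_\tau$, the identity $\Psi\circ\Phi=\mathrm{id}$ is the observation recorded just after \eqref{Eq3.9}: because $\widetilde{E}_\omega$ is a conditional expectation onto $\widetilde{A}_0$ with $\widetilde{E}_\omega(\pi_\alpha(a))=\omega(a)1$ and $e_1\in\widetilde{A}_0$, one has $\omega_{\tau_\omega}(a)=\tau_\omega(e_1\pi_\alpha(a))=\mathrm{tr}_\beta(\omega(a)e_1)=\omega(a)$, using $\mathrm{tr}_\beta(e_1)=1^\beta=1$. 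The reverse identity $\Phi\circ\Psi=\mathrm{id}$, namely $\tau_{\omega_\tau}=\tau$, is precisely Lemma \ref{L3.6}. Hence $\Phi$ and $\Psi$ are mutually inverse bijections.

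For (b) I would first note that both index sets are convex: for $K_\beta^\mathrm{ln}(\alpha^t)$ the KMS condition and local normality are stable under convex combinations, and for $TW_\beta^\mathrm{ln}(\widetilde{\alpha}^\gamma)$ each defining clause of Definition \ref{D3.2}(1) is preserved --- item (i) and the trace property because sums of normal (resp. tracial) maps are again such, item (ii) because it is linear in $\tau$, and item (iii) because $\lambda\cdot1+(1-\lambda)\cdot1=1$. Affinity of $\Psi$ is then immediate, since $\tau\mapsto\omega_\tau(a)=\tau(e_1\pi_\alpha(a))$ is the evaluation of $\tau$ at the fixed element $e_1\pi_\alpha(a)\in\mathfrak{m}_\tau$ and is therefore linear in $\tau$. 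For $\Phi$ I would use that $\omega\mapsto\widetilde{E}_\omega$ is affine: it is built from the normal slice map $R_\omega$, and $R_\omega(a\otimes x)=\omega(a)x$ is linear in $\omega$, so this persists to all of $A\,\bar{\otimes}\,B(L^2(G))$ by normality and density, giving $\widetilde{E}_{\lambda\omega_1+(1-\lambda)\omega_2}=\lambda\widetilde{E}_{\omega_1}+(1-\lambda)\widetilde{E}_{\omega_2}$. Postcomposing with the fixed weight $\mathrm{tr}_\beta$ then yields $\tau_{\lambda\omega_1+(1-\lambda)\omega_2}=\lambda\tau_{\omega_1}+(1-\lambda)\tau_{\omega_2}$, where the final step uses only additivity and positive homogeneity of $\mathrm{tr}_\beta$ on the positive cone.

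Since the bulk of the analytic content sits in the earlier lemmas --- the dual-weight and Takesaki-duality computation producing $\widetilde{E}_{\omega,n}$ in Lemma \ref{L3.4}, and the Phragmen--Lindel\"{o}f argument of Lemma \ref{L3.5} --- there is no genuine obstacle at this stage, and the theorem is a formal consequence. The only point demanding a little care is the affinity verification for $\Phi$, specifically the interchange of the convex combination with the weight $\mathrm{tr}_\beta$ on values in $[0,+\infty]$, which one handles via the weight axioms together with the convention $0\times(+\infty)=0$ already fixed in the text. Combining (a) and (b) shows that $\Phi$ and $\Psi$ are inverse affine bijections, i.e. that $K_\beta^\mathrm{ln}(\alpha^t)$ and $TW_\beta^\mathrm{ln}(\widetilde{\alpha}^\gamma)$ are affine-isomorphic.
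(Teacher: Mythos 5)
Your proposal is correct and takes essentially the same route as the paper: the paper's proof of Theorem \ref{T3.7} is exactly the assembly of Lemma \ref{L3.4}, Lemma \ref{L3.5}, the remark following \eqref{Eq3.9} (which is your computation $\omega_{\tau_\omega}(a)=\tau_\omega(e_1\pi_\alpha(a))=\omega(a)$), and Lemma \ref{L3.6}. Your explicit check of convexity of the two sets and affinity of both maps fills in a formality the paper leaves implicit, and it is handled correctly.
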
 

Thanks to the theorem, a natural topology on $TW_\beta^\mathrm{ln}(\widetilde{\alpha}^\gamma)$ is defined by the following convergence: $\tau_i \to \tau$ in $TW_\beta^\mathrm{ln}(\widetilde{\alpha}^\gamma)$ means that $\tau_i(e_1\pi_\alpha(a))\to\tau(e_1\pi_\alpha(a))$ for every $a \in A$. By item (ii) of Definition \ref{D3.2}(1) we have $\tau_i \to \tau$ in $TW_\beta^\mathrm{ln}(\widetilde{\alpha}^\gamma)$ implies that $\tau_i(e_\mathcal{F} x) \to \tau(e_\mathcal{F}x)$ for any $\mathcal{F} \Subset \Gamma$ and $x \in \widetilde{A}$, and hence $\liminf_i \tau_i(x) \geq \tau(x)$ for all $x \in \widetilde{A}_+$. 

\section{Weight-extended branching graph}\label{S4} 

In the previous section, we transferred the study of locally normal $(\alpha^t,\beta)$-KMS states to that of $(\widetilde{ \alpha}^\gamma,\beta)$-scaling traces on $\widetilde{A}=\varinjlim \widetilde{A}_n$. Here, we will translate this procedure into terms of standard link. For this purpose, we have to assume that all $\dim(z) < \infty$. Then we can select each $\rho_z$ in such a way that $\mathrm{Tr}(\rho_z) = \mathrm{Tr}(\rho_z^{-1})$. Under this selection, the $\rho = \{\rho_z\}_{z\in\mathfrak{Z}}$ is uniquely determined from the flow $\alpha^t$, and hence both $\Gamma=\Gamma(\rho)$ and $G=\widehat{\Gamma}$ are canonical objects associated with $\alpha^t$. Hence we call this $\Gamma$ the \emph{weight group}, and the $\rho$-extension $(\widetilde{\alpha} : \Gamma \curvearrowright \widetilde{A}=\varinjlim \widetilde{A}_n)$ the \emph{weight-extension} in this case. Remark that this choice of $\Gamma$ is not exactly the same as that in the so-called discrete decomposition for type III factors due to Connes (see e.g., \cite{Ueda:AJM16} whose treatment on it fits the present discussion). 

\subsection{Weight-extended branching graph}\label{S4.1} 
Let 
\[
\rho_z = \sum_{\gamma\in\Gamma} \gamma\,p_z(\gamma)
\]
be the spectral decomposition ({\it n.b.,} the support of $p_z(\,\cdot\,)$ is a finite subset of $\Gamma$ due to $\dim(z)<+\infty$). Then 
\[
u_z(g) = \sum_{\gamma\in\Gamma} \langle \gamma,g\rangle\,p_z(\gamma), \qquad g\in G, 
\]
and regarding $p_z(\gamma)$, $u_z(g)$ as elements of $zA_n \subset A_n$ we have 
\[
u_n(g) = \sum_{z\in\mathfrak{Z}_n} u_z(g) = \sum_{z\in\mathfrak{Z}_n}\sum_{\gamma \in \Gamma} \langle\gamma,g\rangle\,p_z(\gamma) \in A_n, \qquad g \in G.  
\]
The unitary operator $U$ on $L^2(G;\mathcal{K}_n)$ defined by 
\[
(U\xi)(g) = u_n(g)\xi(g), \qquad \xi\in C(G;\mathcal{K}_n) \subset L^2(G;\mathcal{K}_n)
\] 
enjoys that 
\begin{equation}\label{Eq4.1}
U\pi_{\alpha_n}(a)U^* = a\otimes1, \qquad U\lambda(g)U^* = u_n(g)\otimes\lambda_g 
\end{equation}
for any $a \in A_n$ and $g \in G$, where we identify $L^2(G;\mathcal{K}_n) = \mathcal{K}_n\,\bar{\otimes}\,L^2(G)$ as in section \ref{S3}. See e.g., \cite[Theorem X.1.7(ii)]{Takesaki:Book2}.  We observe that 
\begin{equation}\label{Eq4.2}
U e_\gamma U^*
= 
\sum_{z\in\mathfrak{Z}_n}\sum_{\gamma_1,\gamma_2\in\Gamma} \int_G \langle \gamma^{-1}\gamma_1\gamma_2,g\rangle\,dg\,p_z(\gamma_1)\otimes e^{(00)}_{\gamma_2}
= 
\sum_{z\in\mathfrak{Z}_n}\sum_{\gamma' \in \Gamma} p_z(\gamma \gamma'{}^{-1})\otimes e^{(00)}_{\gamma'} 
\end{equation}
for every $\gamma \in \Gamma$. 

\begin{lemma}\label{L4.1} There is a unique bijective $*$-homomorphism 
\[
\Phi_n : \widetilde{A}_n \longrightarrow \bigoplus_{(z,\gamma) \in \mathfrak{Z}_n\times\Gamma} zA_n \quad \Bigg(\cong \bigoplus_{(z,\gamma) \in \mathfrak{Z}_n\times\Gamma} B(\mathcal{H}_z)\Bigg)
\]
such that
\begin{equation}\label{Eq4.3}
\Phi_n(\pi_{\alpha_n}(a))(z,\gamma') := za, \qquad 
\Phi_n(e_\gamma)(z,\gamma') := p_z(\gamma\gamma'{}^{-1}) 
\end{equation} 
hold for any $a \in A_n$, $z \in \mathfrak{Z}_n$ and $\gamma,\gamma' \in \Gamma$. The map $\Phi_n$ intertwines the dual action $\widetilde{\alpha}^\gamma$ with the translation action of $\Gamma$ on the right coordinate, that is, 
\begin{equation}\label{Eq4.4}
\Phi_n(\widetilde{\alpha}^\gamma(x))(z,\gamma') = \Phi_n(x)(z,\gamma^{-1}\gamma')
\end{equation}
holds for any $x \in \widetilde{A}_n$ and $z \in \mathfrak{Z}_n$ and $\gamma,\gamma' \in \Gamma$. 
\end{lemma}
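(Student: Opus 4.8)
The plan is to construct $\Phi_n$ explicitly using the unitary $U$ from \eqref{Eq4.1} and \eqref{Eq4.2}, and to verify that the resulting map has the stated form on the generators $\pi_{\alpha_n}(a)$ and $e_\gamma$. First I would observe that conjugation by $U$ carries $\widetilde{A}_n = A_n\,\bar{\rtimes}_{\alpha_n}G$ into the fixed-point algebra of $A_n\,\bar{\otimes}\,B(L^2(G))$ under the action $\mathrm{Ad}(u_n(g)\otimes\lambda_g)$, since \eqref{Eq4.1} shows that $U\widetilde{A}_n U^*$ is generated by $a\otimes 1$ ($a\in A_n$) and the $u_n(g)\otimes\lambda_g$. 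Because $G$ is abelian and compact with dual $\Gamma$, the regular representation decomposes as $L^2(G) = \bigoplus_{\gamma'\in\Gamma}\mathbb{C}e^{(00)}_{\gamma'}$, the $e^{(00)}_{\gamma'}$ being the minimal projections of $\lambda(G)''\cong\ell^\infty(\Gamma)$. The key computation in \eqref{Eq4.2} already diagonalizes $Ue_\gamma U^*$ against these projections, so I would use the $e^{(00)}_{\gamma'}$ to slice the crossed product into its $\gamma'$-components.

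The central step is to define, for each $\gamma'\in\Gamma$, the cut-down compression $x \mapsto (1\otimes e^{(00)}_{\gamma'})(UxU^*)(1\otimes e^{(00)}_{\gamma'})$, viewed as an element of $A_n\otimes e^{(00)}_{\gamma'} \cong A_n$, and then further compress by the central projection $z$ to land in $zA_n \cong B(\mathcal{H}_z)$. I would set $\Phi_n(x)(z,\gamma') := z\cdot\bigl[\text{the }A_n\text{-component of }(1\otimes e^{(00)}_{\gamma'})UxU^*(1\otimes e^{(00)}_{\gamma'})\bigr]$. Applying this to $x=\pi_{\alpha_n}(a)$ gives $a\otimes 1$ after conjugation, whose $(z,\gamma')$-component is exactly $za$, matching the first formula in \eqref{Eq4.3}; applying it to $x=e_\gamma$ and reading off \eqref{Eq4.2} gives precisely $p_z(\gamma\gamma'{}^{-1})$, matching the second. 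Since the $\pi_{\alpha_n}(a)$ and $e_\gamma$ generate $\widetilde{A}_n$ as a $W^*$-algebra, these two formulas determine $\Phi_n$ uniquely once it is shown to be a well-defined $*$-homomorphism.

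To see that $\Phi_n$ is a bijective $*$-isomorphism onto $\bigoplus_{(z,\gamma')}zA_n$, I would argue that conjugation by $U$ together with the abelian crossed-product structure identifies $U\widetilde{A}_n U^*$ with $\bigoplus_{\gamma'}(A_n\otimes e^{(00)}_{\gamma'})$ compressed appropriately; because $\widetilde\alpha^\gamma$ acts on $\lambda(G)''$ by translation of the $e^{(00)}_{\gamma'}$ (this is the content of \eqref{Eq3.3} transported through $U$), each summand is a full copy of $A_n = \bigoplus_z zA_n$, and the finitely many $\gamma'$ that actually occur in any fixed element are controlled by the finiteness of the support of $p_z(\cdot)$, guaranteed by $\dim(z)<\infty$. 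Injectivity and surjectivity then follow from the faithfulness of the regular representation and the fact that the $e^{(00)}_{\gamma'}$ exhaust the identity. Finally, the intertwining relation \eqref{Eq4.4} is read directly off \eqref{Eq3.3}: since $\widetilde\alpha^\gamma(e_{\gamma'})=e_{\gamma\gamma'}$ shifts the label, under $\Phi_n$ it becomes the translation $\gamma'\mapsto\gamma^{-1}\gamma'$ on the right coordinate, which I would verify on the generators and extend by multiplicativity.

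The main obstacle I expect is bookkeeping the two group actions simultaneously: one must keep straight the distinction between the index $\gamma$ labelling the generators $e_\gamma$ of $\widetilde A_0$ and the index $\gamma'$ labelling the slots $e^{(00)}_{\gamma'}$ of the diagonalized regular representation, and ensure the $\gamma\gamma'{}^{-1}$ versus $\gamma^{-1}\gamma'$ conventions in \eqref{Eq4.3} and \eqref{Eq4.4} come out consistently with the computation \eqref{Eq4.2}. The algebraic content beyond this is essentially the standard Takesaki picture of an abelian crossed product as a direct sum over the dual group, so once the indexing is fixed the verification on generators is routine.
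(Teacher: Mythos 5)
Your route is the paper's own — conjugate by $U$, identify $U\widetilde{A}_nU^*$ with $A_n\,\bar{\otimes}\,\lambda(G)''\cong\bigoplus_{(z,\gamma)}zA_n$ by slicing along the minimal projections $e^{(00)}_{\gamma'}$ of $\lambda(G)''\cong\ell^\infty(\Gamma)$, read \eqref{Eq4.3} off \eqref{Eq4.1}--\eqref{Eq4.2}, get uniqueness because the $\pi_{\alpha_n}(a)$ and $e_\gamma$ generate $\widetilde{A}_n$, and check \eqref{Eq4.4} on generators. Those parts are fine. However, two of your supporting claims are false as stated, and one of them leaves a real gap in the bijectivity argument.

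First, $U\widetilde{A}_nU^*$ is \emph{not} contained in the fixed-point algebra of $\mathrm{Ad}(u_n(g)\otimes\lambda_g)$: since $G$ is abelian, $\mathrm{Ad}(u_n(g)\otimes\lambda_g)(a\otimes1)=\alpha_n^g(a)\otimes1$, so $a\otimes1$ is fixed only when $a$ is $\alpha_n$-invariant. (A correct fixed-point description would use $1\otimes\lambda_g$ instead: the fixed points of $\mathrm{Ad}(1\otimes\lambda_g)$ in $A_n\,\bar{\otimes}\,B(L^2(G))$ are exactly $A_n\,\bar{\otimes}\,\lambda(G)''$, because $\lambda(G)''$ is maximal abelian.) Second, and more importantly, your bijectivity argument does not close. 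What you need is the exact equality $U\widetilde{A}_nU^*=A_n\,\bar{\otimes}\,\lambda(G)''$. The inclusion $\subseteq$ follows from your correct observation that the generators $a\otimes1$ and $u_n(g)\otimes\lambda_g$ lie in $A_n\,\bar{\otimes}\,\lambda(G)''$; note that this inclusion is also what makes your slot compressions multiplicative, since cutting by the projections $1\otimes e^{(00)}_{\gamma'}$ is a $*$-homomorphism only on the subalgebra in which these projections are central, not on all of $A_n\,\bar{\otimes}\,B(L^2(G))$. But the reverse inclusion (surjectivity) is not delivered by the facts you invoke: knowing that each slot compression maps \emph{onto} $A_n$ does not give surjectivity onto the $\ell^\infty$-direct sum (the diagonal copy $A_n\otimes1$ already hits every slot fully), and a general element of the crossed product has infinitely many nonzero slots, so no finiteness of the supports of $p_z(\,\cdot\,)$ can control this. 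The missing one-line idea — the one the paper's construction implicitly rests on — is the untwisting $1\otimes\lambda_g=(u_n(g)^*\otimes1)(u_n(g)\otimes\lambda_g)\in U\widetilde{A}_nU^*$, legitimate precisely because $u_n(g)$ is a unitary in $A_n$; hence $U\widetilde{A}_nU^*$ contains both $A_n\otimes1$ and $1\otimes\lambda(G)$, therefore all of $A_n\,\bar{\otimes}\,\lambda(G)''$, and equality holds. With this inserted, your verification of \eqref{Eq4.3}, the uniqueness statement, and the generator check of \eqref{Eq4.4} via \eqref{Eq3.3} go through exactly as in the paper.
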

\begin{proof}
Remark that $A_n\,\bar{\otimes}\,L(G) \cong A_n\,\bar{\otimes}\,\ell^\infty(\Gamma) \cong \bigoplus_{\gamma\in\Gamma}A_n \cong \bigoplus_{(z,\gamma)\in\mathfrak{Z}_n\times\Gamma} zA_n$ by 
\[
a\otimes\lambda_g \leftrightarrow \sum_{\gamma \in \Gamma} \langle\gamma,g\rangle\,a\otimes\delta_\gamma \leftrightarrow 
(\langle \gamma,g\rangle\,a)_{\gamma\in\Gamma} = (\langle \gamma,g\rangle\,za)_{(z,\gamma)\in\mathfrak{Z}_n\times\Gamma}, \qquad a \in A_n, \quad g \in G  
\] 
with $L(G) := \lambda(G)''$ on $L^2(G)$. Therefore, the composition of $\mathrm{Ad}U$ and this bijective $*$-homomorphism gives the desired $\Phi_n$. By \eqref{Eq4.1} and \eqref{Eq4.2}  we obtain 
\[
\Phi_n(e_\gamma)(z,\gamma') = (Ue_\gamma U^*)(z,\gamma') = p_z(\gamma\gamma'{}^{-1})
\]
for every $\gamma \in \Gamma$. Hence we have confirmed that \eqref{Eq4.3} actually holds true. Since the $e_\gamma$ are the spectral projections of $\lambda(g)$ ($g \in G$), it is clear that \eqref{Eq4.3} determines $\Phi_n$ completely. 

We have 
\begin{gather*}
\Phi_n(\widetilde{\alpha}_n^\gamma(\pi_{\alpha_n}(a)))(z,\gamma') = \Phi_n(\pi_{\alpha_n}(a))(z,\gamma') = za =  \Phi_n(\pi_{\alpha_n}(a))(z,\gamma^{-1}\gamma'), \\
\Phi_n(\widetilde{\alpha}_n^\gamma(e_{\gamma''}))(z,\gamma') = \Phi_n(e_{\gamma\gamma''})(z,\gamma') = p_z(\gamma\gamma''\gamma'{}^{-1}) 
= p_z(\gamma''(\gamma^{-1}\gamma')^{-1}) = \Phi_n(e_{\gamma''})(z,\gamma^{-1}\gamma') 
\end{gather*}
({\it n.b.,} $\Gamma$ is commutative). Hence \eqref{Eq4.4} holds true. 
\end{proof}

We then investigate the inclusion $\widetilde{A}_n \hookrightarrow \widetilde{A}_{n+1}$ in the description of Lemma \ref{L4.1}. Note that the lemma, in particular, says that the inductive sequence $\widetilde{A}_n$ consists of finite, atomic $W^*$-algebras again. 

\medskip
Since $\alpha_{n+1}^g= \alpha_n^g$ holds on $A_n$ for every $g \in G$ thanks to the density of $\mathbb{R}$ in $G$, we observe that $g \in G \mapsto w_{n+1,n}(g) := u_n(g)^* u_{n+1}(g) \in (A_n)'\cap A_{n+1}$ gives a unitary representation. Since all the $zz' \neq 0$ with $(z,z') \in \mathfrak{Z}_{n+1}\times\mathfrak{Z}_n$ form a complete set of minimal central projections of $(A_n)'\cap A_{n+1}$, we obtain a unitary representation 
\[
g \in G \mapsto w_{z,z'}(g) := zz' w_{n+1,n}(g) = u_{z}(g) u_{z'}(g)^* = u_{z'}(g)^* u_{z}(g) \in zz'((A_n)'\cap A_{n+1})
\] 
for each $(z,z') \in \mathfrak{Z}_{n+1}\times\mathfrak{Z}_n$ with $zz' \neq 0$. Since $w_{z,z'}(g)$ is a unitary representation of a compact abelian group, it admits a spectral decomposition of the following form: 
\begin{equation}\label{Eq4.5}
w_{(z,z')}(g) = \sum_{\gamma \in \Gamma} \langle \gamma,g\rangle\,q_{(z,z')}(\gamma), \qquad g \in G, 
\end{equation}
where the $q_{(z,z')}(\gamma)$ form a partition of unity of $zz'((A_n)'\cap A_{n+1})$ consisting of projections. Since $\alpha_{n+1}^t = \alpha_n^t$ holds on $A_n$ for every $t \in \mathbb{R}$, we see that $\rho_z\rho_{z'} = \rho_{z'}\rho_z$ holds in $zA_{n+1}$ for each $(z,z') \in \mathfrak{Z}_{n+1}\times\mathfrak{Z}_n$ with $zz'\neq0$. Hence the generator of $w_{z,z'}(t)$ should be $\rho_z\rho_{z'}^{-1}=\rho_{z'}^{-1}\rho_z$, and thus we have the following explicit description of $q_{(z,z')}(\gamma)$ in terms of $p_z(\gamma)$: 
\begin{equation}\label{Eq4.6}
q_{(z,z')}(\gamma) 
= 
\sum_{\gamma'\in\Gamma} p_z(\gamma\gamma')p_{z'}(\gamma')
= 
\sum_{\gamma'\in\Gamma} p_{z'}(\gamma')p_z(\gamma'\gamma), \qquad \gamma \in \Gamma. 
\end{equation} 

\medskip
We define an element $a \otimes \delta_\gamma \in \Phi_n(\widetilde{A}_n)$ with $a \in A_n$ and $\gamma \in \Gamma$ by 
\[
(a\otimes \delta_\gamma)(z',\gamma') := \delta_\gamma(\gamma')\,z' a, \qquad (z',\gamma') \in \mathfrak{Z}_n\times\Gamma, 
\]
where $\delta_\gamma$ denotes the Dirac function at $\gamma$. We remark that the $z\otimes\delta_\gamma$, $(z,\gamma) \in \mathfrak{Z}_n\times\Gamma$, form a complete set of minimal central projections of $\Phi_n(\widetilde{A}_n)$. 

\begin{lemma}\label{L4.2} The embedding $\iota_{n+1,n} = \Phi_{n+1}\circ\Phi_n^{-1} : \Phi_n(\widetilde{A}_n) \hookrightarrow \Phi_{n+1}(\widetilde{A}_{n+1})$ obtained from $\widetilde{A}_n \hookrightarrow \widetilde{A}_{n+1}$ sends each $z'\otimes\delta_{\gamma'}$ with $(z',\gamma') \in \mathfrak{Z}_n\times\Gamma$ to 
\begin{equation}\label{Eq4.7}
\iota_{n+1,n}(z'\otimes\delta_{\gamma'}) = \sum_{\substack{z \in \mathfrak{Z}_{n+1} \\ m(z,z') > 0}} \sum_{\gamma \in \Gamma} q_{(z,z')}(\gamma'\gamma^{-1})\otimes\delta_\gamma.  
\end{equation}
In particular,  
\begin{equation}\label{Eq4.8}
(z\otimes\delta_\gamma)\,\iota_{n+1,n}(z'\otimes\delta_{\gamma'}) = 
\begin{cases} 
q_{(z,z')}(\gamma'\gamma^{-1})\otimes\delta_\gamma & (m(z,z') > 0), \\
0 & (m(z,z') = 0) 
\end{cases}
\end{equation}
for each pair $((z,\gamma),(z',\gamma')) \in (\mathfrak{Z}_{n+1}\times\Gamma)\times(\mathfrak{Z}_n\times\Gamma)$. 
\end{lemma}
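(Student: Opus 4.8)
The plan is to pull the minimal central projection $z'\otimes\delta_{\gamma'}$ back through $\Phi_n$ to an \emph{explicit} element of $\widetilde{A}_n$ expressed in the generators $\pi_{\alpha_n}(A_n)$ and $e_\gamma$, transport it along the inclusion $\widetilde{A}_n\hookrightarrow\widetilde{A}_{n+1}$ of \eqref{Eq3.4}, and then read off its image coordinatewise with $\Phi_{n+1}$. The inputs I would use are the defining formulas \eqref{Eq4.3} for both $\Phi_n$ and $\Phi_{n+1}$, the identity \eqref{Eq4.6} writing $q_{(z,z')}$ through the spectral projections $p_z,p_{z'}$, and the fact that the inclusion restricts to the identity on the common subalgebra $\widetilde{A}_0=\mathbb{C}1\,\bar{\rtimes}\,G$, so that it fixes every $e_\gamma$ while sending $\pi_{\alpha_n}(a)\mapsto\pi_{\alpha_{n+1}}(a)$.

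Writing the spectral decomposition $\rho_{z'}=\sum_{\eta\in\Gamma}\eta\,p_{z'}(\eta)$, a finite sum since $\dim(z')<\infty$, the crucial first step is to establish
\[
\Phi_n^{-1}(z'\otimes\delta_{\gamma'})=\sum_{\eta\in\Gamma}\pi_{\alpha_n}(p_{z'}(\eta))\,e_{\eta\gamma'}.
\]
I would verify this by evaluating the right-hand side at a coordinate $(w,\mu)\in\mathfrak{Z}_n\times\Gamma$ through \eqref{Eq4.3}: the factor $\pi_{\alpha_n}(p_{z'}(\eta))$ contributes $w\,p_{z'}(\eta)=\delta_{w,z'}\,p_{z'}(\eta)$ and $e_{\eta\gamma'}$ contributes $p_w(\eta\gamma'\mu^{-1})$, so their (coordinatewise) product is $\delta_{w,z'}\,p_{z'}(\eta)p_{z'}(\eta\gamma'\mu^{-1})=\delta_{w,z'}\,\delta_{\mu,\gamma'}\,p_{z'}(\eta)$; summing over $\eta$ and using $\sum_\eta p_{z'}(\eta)=z'$ returns exactly $\delta_{w,z'}\delta_{\mu,\gamma'}\,z'$, which is $z'\otimes\delta_{\gamma'}$.

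Next I would apply the inclusion and $\Phi_{n+1}$. By the first paragraph the element becomes $\sum_\eta\pi_{\alpha_{n+1}}(p_{z'}(\eta))\,e_{\eta\gamma'}\in\widetilde{A}_{n+1}$, whose value at $(z,\gamma)\in\mathfrak{Z}_{n+1}\times\Gamma$ is, again by \eqref{Eq4.3}, the sum $\sum_\eta (z\,p_{z'}(\eta))\,p_z(\eta\gamma'\gamma^{-1})$ computed in $zA_{n+1}$. If $m(z,z')=0$ then $zz'=0$, whence $z\,p_{z'}(\eta)=zz'\,p_{z'}(\eta)=0$ and this coordinate vanishes. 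If $m(z,z')>0$, then $\rho_z$ and $\rho_{z'}$ commute in $zA_{n+1}$ (as recorded just before \eqref{Eq4.6}), so the $p_z(\cdot)$ and $p_{z'}(\cdot)$ commute and the sum equals $z\sum_\eta p_{z'}(\eta)p_z(\eta\gamma'\gamma^{-1})=z\,q_{(z,z')}(\gamma'\gamma^{-1})=q_{(z,z')}(\gamma'\gamma^{-1})$ by \eqref{Eq4.6} together with $q_{(z,z')}(\cdot)\le zz'$. This is precisely the coordinate description \eqref{Eq4.8}, and collecting the nonzero coordinates over all $(z,\gamma)$ yields \eqref{Eq4.7}.

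The only genuinely creative point is the first step: producing the generator expression for $z'\otimes\delta_{\gamma'}$ and keeping the $\Gamma$-bookkeeping straight. In particular, the shift $\eta\mapsto\eta\gamma'$ in the index of $e$ is exactly what converts a diagonal product of spectral projections into the off-diagonal kernel $q_{(z,z')}(\gamma'\gamma^{-1})$, and this is where I expect the real care to be needed. Everything afterwards is mechanical: finiteness of every $\Gamma$-sum is guaranteed by $\dim(z'),\dim(z)<\infty$, the required commutativity of spectral projections is the statement preceding \eqref{Eq4.6}, and the final identification with $q_{(z,z')}$ is a one-line reindexing of \eqref{Eq4.6}.
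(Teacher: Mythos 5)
Your proof is correct, and its skeleton matches the paper's: pull the minimal central projection back to an explicit element in the generators $\pi_{\alpha_n}(A_n)$ and $e_\gamma$, push it through $\widetilde{A}_n\hookrightarrow\widetilde{A}_{n+1}$ (which fixes each $e_\gamma$ and sends $\pi_{\alpha_n}(a)$ to $\pi_{\alpha_{n+1}}(a)$), and identify the image summand by summand. The difference is in the execution, and it is worth noting. The paper works in the $G$-integral picture: it quotes from the proof of Lemma \ref{L4.1} the formula $\Phi_n\bigl(\int_G \overline{\langle\gamma',g\rangle}\,\pi_{\alpha_n}(u_{z'}(g)^*)\,\lambda(g)\,dg\bigr)=z'\otimes\delta_{\gamma'}$, transports this integral to $\widetilde{A}_{n+1}$, conjugates by the unitary $U$ of \eqref{Eq4.1} at level $n+1$, decomposes $u_{z'}(g)^*u_{n+1}(g)$ into the unitaries $w_{(z,z')}(g)$, and integrates using the spectral decomposition \eqref{Eq4.5}. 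You instead stay entirely in the $\Gamma$-Fourier picture: you verify the preimage formula $\Phi_n^{-1}(z'\otimes\delta_{\gamma'})=\sum_\eta\pi_{\alpha_n}(p_{z'}(\eta))\,e_{\eta\gamma'}$ directly coordinatewise from \eqref{Eq4.3}, evaluate the pushed-forward element coordinatewise via \eqref{Eq4.3} at level $n+1$, and close with \eqref{Eq4.6}; no integrals and no further appeal to $U$ are needed beyond what is already packaged into \eqref{Eq4.3}. The two computations are Fourier-dual versions of one another --- \eqref{Eq4.6} is what remains of \eqref{Eq4.5} after integrating out $G$ --- so nothing essential changes, but your route is more self-contained, and the paper itself corroborates your key first step: the discrete expansion $\Phi_n^{-1}(z\otimes\delta_\gamma)=\sum_{\gamma_1^{-1}\gamma_2=\gamma}\pi_{\alpha_n}(p_z(\gamma_1))e_{\gamma_2}$ appears verbatim in the computation between Theorems \ref{T4.7} and \ref{T4.8}. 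One small streamlining: for the identification $\sum_\eta p_{z'}(\eta)p_z(\eta\gamma'\gamma^{-1})=q_{(z,z')}(\gamma'\gamma^{-1})$ you need not invoke commutativity of the spectral projections as a separate step, since \eqref{Eq4.6} already records both orderings of the product.
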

\begin{proof} 
Choose an arbitrary pair $(z',\gamma') \in \mathfrak{Z}_n\times\Gamma$. By the proof of Lemma \ref{L4.1} we have 
\[
\Phi_n\Big(\int_G \overline{\langle\gamma',g\rangle}\,\pi_{\alpha_n}(u_{z'}(g)^*)\,\lambda(g)\,dg\Big) = z'\otimes\delta_{\gamma'}. 
\]
Observe that  
\[
\xymatrix{
\int_G \overline{\langle\gamma',g\rangle}\,\pi_{\alpha_n}(u_{z'}(g)^*)\,\lambda(g)\,dg \ar@{=}[d] & \quad \text{in $\widetilde{A}_n$} \\
\int_G \overline{\langle\gamma',g\rangle}\,\pi_{\alpha_{n+1}}(u_{z'}(g)^*)\,\lambda(g)\,dg \ar@{<->}[d] & \qquad \text{in $\widetilde{A}_{n+1}$} \\
\int_G \overline{\langle\gamma',g\rangle}\,(u_{z'}(g)^* u_{n+1}(g))\otimes\lambda_g\,dg & \qquad \text{in $A_n\,\bar{\otimes}\,L(G)$}.
}
\]
We have, by \eqref{Eq4.5} and the proof of Lemma \ref{L3.4} (formula $\lambda_g e_\gamma^{(00)} = \langle\gamma,g\rangle\,e_\gamma^{(00)}$), 
\begin{align*}
\int_G \overline{\langle\gamma',g\rangle}\,(u_{z'}(g)^* u_{n+1}(g))\otimes\lambda_g\,dg 
&= 
\sum_{\substack{z \in \mathfrak{Z}_{n+1} \\ m(z,z') > 0}} 
\int_G \overline{\langle\gamma',g\rangle}\,w_{(z,z')}(g)\otimes\lambda_g\,dg \\
&= 
\sum_{\substack{z \in \mathfrak{Z}_{n+1} \\ m(z,z') > 0}} 
\sum_{\gamma_1, \gamma_2 \in \Gamma} 
\int_G \langle \gamma'{}^{-1} \gamma_1\gamma_2,g\rangle\,q_{(z,z')}(\gamma_1)\otimes e_{\gamma_2}^{(00)}\,dg \\
&= 
\sum_{\substack{z \in \mathfrak{Z}_{n+1} \\ m(z,z') > 0}} 
\sum_{\gamma \in \Gamma} q_{(z,z')}(\gamma'\gamma^{-1})\otimes e_\gamma^{(00)}. 
\end{align*}
It follows that 
\[
\Phi_{n+1}\Big(\int_G \overline{\langle\gamma',g\rangle}\,\pi_{\alpha_n}(u_{z'}(g)^*)\,\lambda(g)\,dg\Big) 
= 
\sum_{\substack{z \in \mathfrak{Z}_{n+1} \\ m(z,z') > 0}} 
\sum_{\gamma \in \Gamma} q_{(z,z')}(\gamma'\gamma^{-1})\otimes\delta_\gamma. 
\]
Consequently, we obtain \eqref{Eq4.7}, which trivially implies \eqref{Eq4.8}. 
\end{proof}

The lemmas above immediately imply the following proposition: 

\begin{proposition}\label{P4.3} The minimal central projections of $\widetilde{A}_n$ are labeled by $\widetilde{\mathfrak{Z}}_n := \mathfrak{Z}_n\times\Gamma$, and the dimension corresponding to a $(z,\gamma) \in \widetilde{\mathfrak{Z}}_n$ becomes $\dim(z)$ (i.e., being independent of $\gamma$). 

The branching graph $(\widetilde{\mathfrak{Z}},\tilde{m})$ of the inductive sequence $\widetilde{A}_n$ is given by $\widetilde{\mathfrak{Z}} := \bigsqcup_{n\geq0} \widetilde{\mathfrak{Z}}_n$ and
\begin{align*}
\tilde{m}((z,\gamma),(z',\gamma')) 
&= \frac{\mathrm{Tr}(\iota_{n+1,n}(z'\otimes\delta_{\gamma'})(z,\gamma))}{\dim(z')} \\
&= 
\begin{cases} 
\frac{\mathrm{Tr}(q_{(z,z')}(\gamma^{-1}\gamma'))}{\dim(z')} & (m(z,z') > 0), \\
0 & (m(z,z') = 0) 
\end{cases}
\end{align*}
for any $((z,\gamma),(z',\gamma')) \in \widetilde{\mathfrak{Z}}_{n+1}\times\widetilde{\mathfrak{Z}}_n$, $n \geq 0$. 
In particular, the standard link $\tilde{\mu}$ over $(\widetilde{\mathfrak{Z}},\tilde{m})$ becomes 
\begin{align*}
\tilde{\mu}((z,\gamma),(z',\gamma')) 
&=
\tilde{m}((z,\gamma),(z',\gamma'))\,\frac{\dim(z')}{\dim(z)} \\
&= 
\begin{cases} 
\frac{\mathrm{Tr}(q_{(z,z')}(\gamma^{-1}\gamma'))}{\dim(z)} & (m(z,z') > 0), \\
0 & (m(z,z') = 0) 
\end{cases}
\end{align*}
for any $((z,\gamma),(z',\gamma')) \in \widetilde{\mathfrak{Z}}_{n+1}\times\widetilde{\mathfrak{Z}}_n$, $n \geq 0$. 

In particular, the multiplicity function $\tilde{m}$ and the standard link $\tilde{\mu}$ are invariant under the translation action $T: \Gamma \curvearrowright \widetilde{\mathfrak{Z}}$ defined by $T_\gamma(z,\gamma') := (z,\gamma\gamma')$, that is, 
\[
\tilde{\mu}\circ(T_\gamma^{-1}\times T_\gamma^{-1}) = \tilde{\mu}, \quad \tilde{m}\circ(T_\gamma^{-1}\times T_\gamma^{-1}) = \tilde{m}, \qquad \gamma \in \Gamma. 
\] 
\end{proposition}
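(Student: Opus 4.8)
The plan is to read off the three structural formulas directly from Lemmas~\ref{L4.1} and~\ref{L4.2} together with the general description~\eqref{Eq2.1} of a multiplicity function, and then to extract translation invariance from the observation that every expression depends on the two $\Gamma$-labels only through their ``difference''.

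First I would settle the labeling of minimal central projections. By Lemma~\ref{L4.1}, $\Phi_n$ is a bijective $*$-homomorphism of $\widetilde{A}_n$ onto $\bigoplus_{(z,\gamma)\in\mathfrak{Z}_n\times\Gamma} zA_n$, and each summand $zA_n=B(\mathcal{H}_z)$ is a type~I factor. Hence the minimal central projections of $\widetilde{A}_n$ are exactly the units $z\otimes\delta_\gamma$ of the summands, labeled by $\widetilde{\mathfrak{Z}}_n=\mathfrak{Z}_n\times\Gamma$, and the dimension attached to $(z,\gamma)$ is $\dim(\mathcal{H}_z)=\dim(z)$, visibly independent of $\gamma$.

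Next I would compute the multiplicity function by specializing~\eqref{Eq2.1} to the inductive sequence $\widetilde{A}_n$: for $((z,\gamma),(z',\gamma'))\in\widetilde{\mathfrak{Z}}_{n+1}\times\widetilde{\mathfrak{Z}}_n$ one has $\tilde{m}((z,\gamma),(z',\gamma'))=\mathrm{Tr}\big((z\otimes\delta_\gamma)\,\iota_{n+1,n}(z'\otimes\delta_{\gamma'})\big)/\dim(z')$, where $\mathrm{Tr}$ is the non-normalized trace on the $(z,\gamma)$-summand $zA_{n+1}=B(\mathcal{H}_z)$. Substituting~\eqref{Eq4.8}, the product equals $q_{(z,z')}(\gamma'\gamma^{-1})\otimes\delta_\gamma$ when $m(z,z')>0$ and vanishes when $m(z,z')=0$; taking the trace in the $(z,\gamma)$-summand gives $\mathrm{Tr}(q_{(z,z')}(\gamma'\gamma^{-1}))$, and commutativity of $\Gamma$ turns $\gamma'\gamma^{-1}$ into $\gamma^{-1}\gamma'$, yielding the stated formula for $\tilde{m}$. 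The formula for the standard link then follows from its definition $\tilde{\mu}=\tilde{m}\cdot\dim(z')/\dim(z)$ (the dimensions of $(z',\gamma')$ and $(z,\gamma)$ being $\dim(z')$ and $\dim(z)$), the factor $\dim(z')$ cancelling to leave $\mathrm{Tr}(q_{(z,z')}(\gamma^{-1}\gamma'))/\dim(z)$.

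Finally I would verify translation invariance. The point is that both $\tilde{m}$ and $\tilde{\mu}$ depend on the $\Gamma$-labels of the two vertices $(z,\gamma)$ and $(z',\gamma')$ only through $\gamma^{-1}\gamma'$. For a shifting element $\eta\in\Gamma$, applying $T_\eta^{-1}\times T_\eta^{-1}$ replaces these labels by $(z,\eta^{-1}\gamma)$ and $(z',\eta^{-1}\gamma')$, and by commutativity of $\Gamma$ the relevant product $(\eta^{-1}\gamma)^{-1}(\eta^{-1}\gamma')=\gamma^{-1}\gamma'$ is unchanged; hence $\tilde{m}\circ(T_\eta^{-1}\times T_\eta^{-1})=\tilde{m}$ and $\tilde{\mu}\circ(T_\eta^{-1}\times T_\eta^{-1})=\tilde{\mu}$. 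There is no genuine obstacle here, since the statement is an immediate consequence of the two lemmas; the only points requiring care are invoking~\eqref{Eq2.1} correctly in the extended setting (reading $\mathrm{Tr}$ as the trace on the relevant summand and dividing by $\dim(z')$) and matching the two forms of the argument of $q_{(z,z')}$ through commutativity of $\Gamma$.
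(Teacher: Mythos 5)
Your proposal is correct and matches the paper's approach exactly: the paper offers no separate argument, stating only that Lemmas~\ref{L4.1} and~\ref{L4.2} ``immediately imply'' the proposition, and your write-up is precisely the fleshing-out of that deduction (labeling of central summands from $\Phi_n$, the multiplicity formula from \eqref{Eq2.1} applied to \eqref{Eq4.8}, and invariance because everything depends on the $\Gamma$-labels only through $\gamma^{-1}\gamma'$). The two points you flag as needing care --- reading $\mathrm{Tr}$ on the correct summand and using commutativity of $\Gamma$ to reconcile $\gamma'\gamma^{-1}$ with $\gamma^{-1}\gamma'$ --- are indeed the only nontrivial bookkeeping, and you handle both correctly.
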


\begin{remark}\label{R4.4}{\rm 
Lemma \ref{L4.1} says that 
\[
\widetilde{A}_n \cong \Phi_n(\widetilde{A}_n) = \bigoplus_{(z,\gamma)\in\mathfrak{Z}_n\times\Gamma} \overset{\tiny z\otimes\delta_\gamma}{zA_n} \quad \text{with $zA_n = B(\mathcal{H}_z)$}, 
\]
where the symbol $z\otimes\delta_\gamma$ over $zA_n$ indicates the central support projection of direct summand $zA_n$. Then its center-valued trace $\mathrm{ctr}_n$ is given by 
\begin{equation}
\mathrm{ctr}_n(x)(z,\gamma) = \frac{\mathrm{Tr}(x(z,\gamma))}{\dim(z)}, \qquad x \in \Phi_n(\widetilde{A}_n),\quad (z,\gamma) \in \mathfrak{Z}_n\times\Gamma, 
\end{equation}
where $\mathrm{Tr}$ stands for the non-normalized trace on $zA_n = B(\mathcal{H}_z)$. (See \cite[Theorem V.2.6]{Takesaki:Book1}; its uniqueness guarantees that the above map is indeed the center-valued trace.) We observe that 
\begin{equation}\label{Eq4.10}
\mathrm{ctr}_{n+1}(\iota_{n+1,n}(z'\otimes\delta_{\gamma'}))(z,\gamma) = \tilde{\mu}((z,\gamma),(z',\gamma'))
\end{equation}
holds for every pair $((z,\gamma),(z',\gamma')) \in \widetilde{\mathfrak{Z}}_{n+1}\times\widetilde{\mathfrak{Z}}_n$, $n \geq 0$. This is consistent with \cite[equation (3.7)]{Ueda:SIGMA22}, and the natural conditional expectations playing the role of $E^{(\alpha_n^t,\beta)}$ in \cite{Ueda:SIGMA22} are the center-valued traces of $\widetilde{A}_n$ in the present context. }
\end{remark}

\subsection{Harmonic functions corresponding to $(\widetilde{\alpha}^\gamma,\beta)$-scaling traces}\label{S4.2} 

So far, we have described the branching graph $(\widetilde{\mathfrak{Z}},\tilde{m})$ associated with the $\widetilde{A}_n$, $n\geq0$. With the description, we will translate the $(\widetilde{\alpha}^\gamma,\beta)$-traces $TW_\beta^\mathrm{ln}(\widetilde{\alpha}^\gamma)$ into a certain class of harmonic functions on $(\widetilde{\mathfrak{Z}},\tilde{m})$. 

\begin{lemma}\label{L4.5} For each $\tau \in TW_\beta^\mathrm{ln}(\widetilde{\alpha}^\gamma)$ there is a unique function $\tilde{\nu} = \tilde{\nu}[\tau] : \widetilde{\mathfrak{Z}} := \bigsqcup_{n\geq0}\widetilde{\mathfrak{Z}}_n \to [0,+\infty)$ such that 
\begin{equation}\label{Eq4.11}
\tau(x) = \sum_{(z,\gamma)\in\widetilde{\mathfrak{Z}}_n} \tilde{\nu}(z,\gamma)\,\frac{\mathrm{Tr}(\Phi_n(x)(z,\gamma))}{\dim(z)}, \qquad x \in \widetilde{A}_n. 
\end{equation}
The function $\tilde{\nu}$ enjoys the following properties: 
\begin{itemize}
\item[(i)] $\displaystyle \tilde{\nu}(z',\gamma') = \sum_{(z,\gamma) \in \widetilde{\mathfrak{Z}}_{n+1}}\tilde{\nu}(z,\gamma)\,\tilde{\mu}((z,\gamma),(z',\gamma'))$ for all $(z',\gamma') \in \widetilde{\mathfrak{Z}}_n$, $n \geq 0$. 
\item[(ii)] $\displaystyle \tilde{\nu}(z,\gamma) = \gamma^\beta\tilde{\nu}(z,1)$ for all $(z,\gamma) \in \widetilde{\mathfrak{Z}}$. 
\item[(iii)] $\tilde{\nu}(1,1) = 1$. 
\end{itemize}
\end{lemma}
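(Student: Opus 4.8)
The plan is to extract the function $\tilde{\nu}$ directly from the atomic structure of $\widetilde{A}_n$ established in Proposition \ref{P4.3}, and then verify properties (i)--(iii) by transporting back the three defining features of an $(\widetilde{\alpha}^\gamma,\beta)$-scaling trace. First I would fix $n$ and use the isomorphism $\Phi_n : \widetilde{A}_n \to \bigoplus_{(z,\gamma)} zA_n$ from Lemma \ref{L4.1}. Since $\tau$ restricted to $\widetilde{A}_n$ is a normal (by item (i) of Definition \ref{D3.2}(1), applied with $x=1$) semifinite tracial weight on a finite atomic $W^*$-algebra, it is determined by its values on the minimal central projections. Concretely, on each direct summand $zA_n = B(\mathcal{H}_z)$ labelled by $(z,\gamma)$, the trace $\tau$ must be a scalar multiple of the non-normalized trace $\mathrm{Tr}$, because any tracial weight on a type $\mathrm{I}$ factor is proportional to $\mathrm{Tr}$. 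Writing that proportionality constant suitably, I would \emph{define}
\[
\tilde{\nu}(z,\gamma) := \tau(\Phi_n^{-1}(z\otimes\delta_\gamma)),
\]
the value of $\tau$ on the minimal central projection $z\otimes\delta_\gamma$. This immediately gives \eqref{Eq4.11}: decomposing $x = \sum_{(z,\gamma)} x(z,\gamma)$ into its block components and using $\tau|_{zA_n} = \frac{\tilde{\nu}(z,\gamma)}{\dim(z)}\mathrm{Tr}(\,\cdot\,)$ on each block (since $z\otimes\delta_\gamma$ corresponds to dimension $\dim(z)$ and $\mathrm{Tr}(z\otimes\delta_\gamma)=\dim(z)$ within that factor), summing over blocks yields exactly the stated formula. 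Uniqueness of $\tilde\nu$ is forced by plugging $x = \Phi_n^{-1}(z\otimes\delta_\gamma)$ into \eqref{Eq4.11}. I must also check consistency across levels: the value $\tilde{\nu}(z',\gamma')$ computed from $\widetilde{A}_n$ agrees with that from $\widetilde{A}_{n+1}$, which holds because the embedding $\widetilde{A}_n\hookrightarrow\widetilde{A}_{n+1}$ and $\tau$ are the same maps regardless of which algebra we view a given element in, so $\tilde\nu$ is well-defined on all of $\widetilde{\mathfrak{Z}}$.

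For property (i), the harmonicity, I would apply $\tau$ to the central projection $z'\otimes\delta_{\gamma'}$ viewed inside $\widetilde{A}_{n+1}$ via the embedding $\iota_{n+1,n}$. On the one hand $\tau(\Phi_{n+1}^{-1}\iota_{n+1,n}(z'\otimes\delta_{\gamma'})) = \tilde{\nu}(z',\gamma')$ by the level-$n$ formula. On the other hand, I would expand $\iota_{n+1,n}(z'\otimes\delta_{\gamma'})$ in the level-$(n+1)$ blocks using \eqref{Eq4.7}, apply \eqref{Eq4.11} at level $n+1$, and recognize the resulting coefficient $\mathrm{ctr}_{n+1}(\iota_{n+1,n}(z'\otimes\delta_{\gamma'}))(z,\gamma)$ as exactly $\tilde{\mu}((z,\gamma),(z',\gamma'))$ by \eqref{Eq4.10} in Remark \ref{R4.4}. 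Equating the two computations gives the harmonicity identity (i). Property (ii) is the scaling property: using the equivariance \eqref{Eq4.4} of $\Phi_n$, one has $\Phi_n^{-1}(z\otimes\delta_\gamma) = \widetilde{\alpha}^\gamma(\Phi_n^{-1}(z\otimes\delta_1))$, so item (ii) of Definition \ref{D3.2}(1), namely $\tau\circ\widetilde{\alpha}^\gamma = \gamma^\beta\tau$, yields $\tilde{\nu}(z,\gamma) = \gamma^\beta\tilde{\nu}(z,1)$ directly. Property (iii) is immediate: at $n=0$ we have $\widetilde{\mathfrak{Z}}_0 = \{1\}\times\Gamma$ with $\Phi_0^{-1}(1\otimes\delta_1) = e_1$, so $\tilde{\nu}(1,1) = \tau(e_1) = 1$ by the normalization item (iii) of Definition \ref{D3.2}(1).

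The one point requiring genuine care, rather than routine bookkeeping, is the assertion that $\tilde{\nu}$ takes values in $[0,+\infty)$ and not merely $[0,+\infty]$ — i.e.\ finiteness. This is where I would invoke the remark following Definition \ref{D3.2} that $\tau(e_\gamma) = \gamma^\beta < +\infty$, together with semifiniteness of $\tau$ on each $\widetilde{A}_n$. Since $z\otimes\delta_\gamma \leq \sum_{\gamma} z\otimes\delta_\gamma$ sits below a sum of the central projections whose $\tau$-values are controlled by the $\tau(e_\gamma)$, each $\tilde{\nu}(z,\gamma)$ is finite; concretely $\Phi_n^{-1}(z\otimes\delta_\gamma)$ is a projection dominated (after translating by $\widetilde{\alpha}$) by a finite sum of the $e_{\gamma}$'s on which $\tau$ is finite, forcing $\tilde{\nu}(z,\gamma)<+\infty$. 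The main obstacle, then, is not any single hard estimate but the careful verification that the level-independence of $\tilde\nu$ and the finiteness both follow cleanly from the normality-on-blocks and semifiniteness already packaged into Definition \ref{D3.2}(1); once those are in place, properties (i)--(iii) are each a one-line transcription of the corresponding structural feature of $\tau$ through the isomorphism $\Phi_n$.
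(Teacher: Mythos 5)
Your proposal is correct and follows essentially the same route as the paper's proof: you define $\tilde{\nu}(z,\gamma)$ as the value of $\tau$ on the minimal central projection $z\otimes\delta_\gamma$ (equivalently, the scalar making $\tau$ a multiple of the normalized trace on each block), derive (i) by applying $\tau$ to $\iota_{n+1,n}(z'\otimes\delta_{\gamma'})$ and invoking \eqref{Eq4.10}, derive (ii) from the equivariance \eqref{Eq4.4} together with the scaling property of $\tau$, and derive (iii) from $\tau(e_1)=1$, exactly as the paper does. Your finiteness argument (dominating $\Phi_n^{-1}(z\otimes\delta_\gamma)$ by finitely many $e_{\gamma'}$'s, using that $p_z(\,\cdot\,)$ has finite support) is a slightly more explicit justification than the paper's terse appeal to semifiniteness on $\widetilde{A}_n$, but it rests on the same facts recorded after Definition \ref{D3.2}.
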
 
\begin{proof} 
Write $\tau_n := \tau\circ\Phi_n^{-1}$ for simplicity, and it should be a normal semifinite tracial weight on $\Phi_n(\widetilde{A}_n)$. Since all the $z \otimes\delta_\gamma$ form a complete orthogonal family of minimal central projections of $\Phi_n(\widetilde{A}_n)$, we observe that $\tau_n(z\otimes\delta_\gamma) < +\infty$ for any $(z,\gamma) \in \widetilde{\mathfrak{Z}}_n$. Thus, 
\[
a \in \overset{z\otimes\delta_\gamma}{(zA_n)_+} \Big(\subset \Phi_n(\widetilde{A}_n)_+\Big) \mapsto \tau_n(a) 
\in [0,+\infty)
\]
(see Remark \ref{R4.4} for this notation of direct summands) coincides with a unique non-negative scalar multiple of the normalized trace $\mathrm{Tr}(\,\cdot\,)/\dim(z)$ on $zA_n = B(\mathcal{H}_z)$. Then, the non-negative scalar gives the desired number $\tilde{\nu}(z,\gamma)$, that is, by the semifiniteness and normality, we have 
\[
\tau_n(x) 
= 
\sum_{(z,\gamma) \in \widetilde{\mathfrak{Z}}_n} \tau_n((z\otimes\delta_\gamma)x) 
= 
\sum_{(z,\gamma) \in \widetilde{\mathfrak{Z}}_n} \tilde{\nu}(z,\gamma)\frac{\mathrm{Tr}(x(z,\gamma))}{\dim(z)}\ 
\Big(= \tau_n(\mathrm{ctr}_n(x)) \Big)
\]
for all $x \in \Phi_n(\widetilde{A}_n)_+$. Hence \eqref{Eq4.11} follows.  

\medskip
Item (i): We have
\begin{align*}
\tilde{\nu}(z',\gamma') 
&= 
\tau_n(z'\otimes\delta_{\gamma'}) \\
&= 
\tau_{n+1}(\iota_{n+1,n}(z'\otimes\delta_{\gamma'})) \\
&= 
\sum_{(z,\gamma) \in \widetilde{\mathfrak{Z}}_{n+1}} \tilde{\nu}(z,\gamma)\frac{\mathrm{Tr}(\iota_{n+1,n}(z'\otimes\delta_{\gamma'})(z,\gamma))}{\dim(z)} \\
&= 
\sum_{(z,\gamma) \in \widetilde{\mathfrak{Z}}_{n+1}} \tilde{\nu}(z,\gamma)\,\tilde{\mu}((z,\gamma),(z',\gamma'))
\end{align*}
by Proposition \ref{P4.3} (and Remark \ref{R4.4}).  

\medskip
Item (ii): We observe that 
\[
\Phi_n(\widetilde{\alpha}^\gamma(\Phi_n^{-1}(z\otimes\delta_1)))(z',\gamma') 
= 
\Phi_n(\Phi_n^{-1}(z\otimes\delta_1))(z',\gamma^{-1}\gamma') 
= 
(z\otimes\delta_\gamma)(z',\gamma')
\]
for $(z',\gamma') \in \widetilde{\mathfrak{Z}}_n$. Hence we have $\widetilde{\alpha}^\gamma(\Phi_n^{-1}(z\otimes\delta_1)) = \Phi_n^{-1}(z\otimes\delta_\gamma)$, and hence 
\[
\tilde{\nu}(z,\gamma) = \tau(\Phi_n^{-1}(z\otimes\delta_\gamma)) = \tau(\widetilde{\alpha}^\gamma(\Phi_n^{-1}(z\otimes\delta_1))) = 
\gamma^\beta\,\tau(\Phi_n^{-1}(z\otimes\delta_1)) = \gamma^\beta\,\tilde{\nu}(z,1) 
\]
by item (ii) of Definition \ref{D3.2}(1). 

\medskip 
Iten (iii): This is nothing but item (iii) of Definition \ref{D3.2}(1), i.e., $\tau(e_1) = 1$. 
\end{proof} 

We remark that 
\[
\sum_{z\in\mathfrak{Z}_n} \frac{\dim_\beta(z)}{\dim(z)}\,\tilde{\nu}(z,1) = 1, \qquad n \geq 0, 
\]
which follows from items (i)-(iii) above thanks to Proposition \ref{P4.3}. 

\begin{definition}\label{D4.6} A \emph{normalized, $\beta$-power scaling $\tilde{\mu}$-harmonic function} is a function $\tilde{\nu} : \widetilde{\mathfrak{Z}} \to [0,+\infty)$ such that items (i)-(iii) in Proposition \ref{L4.5} hold. We denote by $H_1^+(\tilde{\mu})_\beta$ all the normalized, $\beta$-power scaling $\tilde{\mu}$-harmonic functions. 
\end{definition}

We will use the notion of $\kappa$-harmonic functions and notation $H_1^+(\kappa)$ in \cite[section 7]{Ueda:preprint20} in what follows. 

\begin{theorem}\label{T4.7} There is a unique affine-isomorphism $\nu \in H_1^+(\kappa) \longleftrightarrow \tilde{\nu} \in H_1^+(\tilde{\mu})_\beta$ with 
\[
\dim_\beta(z)\,\tilde{\nu}(z,\gamma) = \dim(z)\,\nu(z)\,\gamma^\beta, \qquad (z,\gamma) \in \widetilde{\mathfrak{Z}}. 
\]
\end{theorem}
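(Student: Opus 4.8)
The plan is to construct the correspondence $\nu \leftrightarrow \tilde\nu$ explicitly via the displayed relation and then verify that each side's defining conditions translate into the other's. First I would recall the definition of $H_1^+(\kappa)$ from \cite[section 7]{Ueda:preprint20}: a normalized $\kappa$-harmonic function $\nu : \mathfrak{Z} \to [0,\infty)$ satisfies $\nu(z') = \sum_{z \in \mathfrak{Z}_{n+1}} \nu(z)\,\kappa(z,z')$ for all $z' \in \mathfrak{Z}_n$ together with $\nu(1)=1$. Given $\tilde\nu \in H_1^+(\tilde\mu)_\beta$, I would \emph{define} $\nu$ by solving the displayed formula, i.e. $\nu(z) := \dim_\beta(z)\,\tilde\nu(z,1)/\dim(z)$ (taking $\gamma=1$, which loses no information since item (ii) of Lemma \ref{L4.5} recovers all other $\gamma$). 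Conversely, given $\nu \in H_1^+(\kappa)$, I would set $\tilde\nu(z,\gamma) := \dim(z)\,\nu(z)\,\gamma^\beta/\dim_\beta(z)$; note this automatically satisfies item (ii) (the $\gamma^\beta$-scaling) and, at $(1,1)$, gives $\tilde\nu(1,1)=\nu(1)=1$ since $\dim(1)=\dim_\beta(1)=1$, matching item (iii). So the two maps are manifestly mutually inverse and affine once well-definedness is shown; affineness is immediate from linearity of the formula in $\nu$ (resp.\ $\tilde\nu$).

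The core of the proof is therefore the equivalence of the harmonicity conditions: item (i) of Lemma \ref{L4.5} for $\tilde\nu$ must be shown equivalent to $\kappa$-harmonicity of $\nu$. The key computation is to expand item (i) using the explicit form of $\tilde\mu$ from Proposition \ref{P4.3}. Substituting $\tilde\nu(z,\gamma) = \dim(z)\nu(z)\gamma^\beta/\dim_\beta(z)$ and $\tilde\mu((z,\gamma),(z',\gamma')) = \mathrm{Tr}(q_{(z,z')}(\gamma^{-1}\gamma'))/\dim(z)$ (when $m(z,z')>0$), item (i) for the pair $(z',\gamma')$ reads
\begin{equation*}
\frac{\dim(z')\nu(z')\gamma'^\beta}{\dim_\beta(z')} = \sum_{z \in \mathfrak{Z}_{n+1}} \frac{\dim(z)\nu(z)}{\dim_\beta(z)}\,\frac{1}{\dim(z)}\sum_{\gamma\in\Gamma} \gamma^\beta\,\mathrm{Tr}(q_{(z,z')}(\gamma^{-1}\gamma')).
\end{equation*}
The inner sum $\sum_\gamma \gamma^\beta\,\mathrm{Tr}(q_{(z,z')}(\gamma^{-1}\gamma'))$ is the crux: I would evaluate it by the change of variable $\eta=\gamma^{-1}\gamma'$, giving $\gamma'^\beta\sum_\eta \eta^{-\beta}\,\mathrm{Tr}(q_{(z,z')}(\eta))$, and then identify $\sum_\eta \eta^{-\beta}\,\mathrm{Tr}(q_{(z,z')}(\eta))$ with $\mathrm{Tr}\big((\rho_z\rho_{z'}^{-1})^{-\beta} zz'\big) = \mathrm{Tr}(\rho_z^{-\beta}\rho_{z'}^{\beta}\,zz')$ using the spectral decomposition \eqref{Eq4.5}--\eqref{Eq4.6} of $w_{(z,z')}$, whose generator is $\rho_z\rho_{z'}^{-1}$.

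The main obstacle will be reconciling this trace with the link $\kappa(z,z') = \mathrm{Tr}(\rho_z^{-\beta}z')/\dim_\beta(z)$ from \eqref{Eq2.3}; the two differ by the factor $\rho_{z'}^\beta$, which must be absorbed correctly using the $\rho_{z'}$-eigenspace structure inside $zz'A_{n+1}$. Here I expect to use that $\rho_{z'}$ restricted to $zz'A_{n+1}$ acts as $\rho_{z'}$ on the $B(\mathcal{H}_{z'})$-factor so that, after cancelling the $\gamma'^\beta$ on both sides and multiplying through by $\dim_\beta(z')/\dim(z')$, the summand reduces to $\nu(z)\,\mathrm{Tr}(\rho_z^{-\beta}z')/\dim_\beta(z) = \nu(z)\,\kappa(z,z')$, so that item (i) becomes exactly $\nu(z') = \sum_z \nu(z)\kappa(z,z')$. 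I would close by observing that the normalization remark following Lemma \ref{L4.5}, namely $\sum_{z}\dim_\beta(z)\tilde\nu(z,1)/\dim(z)=1$, is precisely $\sum_z \nu(z)=1$ at level $0$, confirming consistency of the normalizations $\nu(1)=1$ and $\tilde\nu(1,1)=1$, and that the bijectivity and affineness established at the outset complete the proof.
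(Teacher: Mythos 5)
Your proposal is correct and is essentially the paper's own proof: you define the correspondence by the same formulas, dispatch items (ii) and (iii) of Lemma \ref{L4.5} trivially, and reduce everything to the same core computation, expanding item (i) via Proposition \ref{P4.3}, changing variables in the $\gamma$-sum, and collapsing $\sum_{\eta}\eta^{-\beta}\,\mathrm{Tr}(q_{(z,z')}(\eta))$ to $\mathrm{Tr}(\rho_z^{-\beta}\rho_{z'}^{\beta}zz')$ via \eqref{Eq4.5}--\eqref{Eq4.6}. The single point where you deviate is the step you yourself flag as the main obstacle: relating $\mathrm{Tr}(\rho_z^{-\beta}\rho_{z'}^{\beta}zz')$ to $\kappa(z,z')$. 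The paper isolates this as \eqref{Eq4.12} and proves it in one line: $x\mapsto \mathrm{Tr}(\rho_z^{-\beta}x)/\mathrm{Tr}(\rho_z^{-\beta}z')$ is an $(\alpha_{z'}^t,\beta)$-KMS state on $z'A_n=B(\mathcal{H}_{z'})$, hence equals $\tau_{z'}^\beta$ by uniqueness of KMS states; taking $x=\rho_{z'}^\beta$ yields $\mathrm{Tr}(\rho_z^{-\beta}\rho_{z'}^\beta)=\mathrm{Tr}(\rho_z^{-\beta}z')\,\dim(z')/\dim_\beta(z')$, which is exactly your absorption factor. Your alternative route --- using that under $z'\mathcal{H}_z\cong\mathcal{H}_{z'}\otimes\mathbb{C}^{m(z,z')}$ one has $zz'\rho_z=\rho_{z'}\otimes\sigma$ with $\sigma^{it}$ generating $w_{(z,z')}$, so that $\mathrm{Tr}(\rho_z^{-\beta}\rho_{z'}^{\beta}zz')=\dim(z')\,\mathrm{Tr}(\sigma^{-\beta})$ while $\mathrm{Tr}(\rho_z^{-\beta}z')=\dim_\beta(z')\,\mathrm{Tr}(\sigma^{-\beta})$ --- does work and is more elementary and explicit about the mechanism, but as written (``I expect to use \dots'') it is the one step you must actually carry out; the KMS-uniqueness argument is the shortcut the paper chose, and it buys brevity at the cost of hiding the tensor factorization. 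A minor organizational difference: you treat both directions at once as an equivalence of harmonicity conditions, which is legitimate since item (ii) of Lemma \ref{L4.5} makes your two maps mutually inverse at the level of functions, whereas the paper verifies the two directions by separate, parallel computations.
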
 
\begin{proof} 
We first claim that 
\begin{equation}\label{Eq4.12}
\frac{\mathrm{Tr}(\rho_z^{-\beta}x)}{\mathrm{Tr}(\rho_z^{-\beta}z')} 
= 
\frac{\mathrm{Tr}(\rho_{z'}^{-\beta}x)}{\dim_\beta(z')}, \qquad x \in z'A_n = B(\mathcal{H}_{z'}) \hookrightarrow zA_{n+1} = B(\mathcal{H}_z)
\end{equation}
holds for any pair $(z,z') \in \mathfrak{Z}_{n+1}\times\mathfrak{Z}_n$ with $m(z,z') > 0$. In fact, the left-hand side defines an $(\alpha_{z'}^t,\beta)$-KMS state on $z'A_n = B(\mathcal{H}_{z'})$, and the uniqueness of $(\alpha_{z'}^t,\beta)$-KMS state shows the claim. 

\medskip
Let $\nu \in H_1^+(\kappa)$ be arbitrarily chosen, and we will show that 
\[
\tilde{\nu}(z,\gamma) := \frac{\dim(z)}{\dim_\beta(z)}\,\nu(z)\,\gamma^\beta
\]
defines an element of $H_1^+(\tilde{\mu})_\beta$. Item (ii) of Lemma \ref{L4.5} trivially holds, and the normalization property of $\nu$ trivially implies item (iii) of Lemma \ref{L4.5}. Hence, it suffices to show item (i) of Lemma \ref{L4.5}. 

We have 
\begin{align*} 
\sum_{(z,\gamma) \in \widetilde{\mathfrak{Z}}_{n+1}} \tilde{\nu}(z,\gamma)\,\tilde{\mu}((z,\gamma),(z',\gamma)) 
&=
\sum_{\substack{(z,\gamma) \in \widetilde{\mathfrak{Z}}_{n+1} \\ m(z,z')>0}} \frac{\dim(z)}{\dim_\beta(z)}\,\nu(z)\,\gamma^\beta\,\frac{\mathrm{Tr}(q_{(z,z')}(\gamma^{-1}\gamma'))}{\dim(z)} \\
&= 
\frac{1}{\dim_\beta(z)}\sum_{\substack{z \in \mathfrak{Z}_{n+1} \\ m(z,z')>0}}  \nu(z)\sum_{\gamma \in \Gamma}\gamma^\beta\,\mathrm{Tr}(q_{(z,z')}(\gamma^{-1}\gamma')).  
\end{align*}
Now, we observe that 
\begin{align*}
\sum_{\gamma \in \Gamma}\gamma^\beta\,\mathrm{Tr}(q_{(z,z')}(\gamma^{-1}\gamma'))
&=
\sum_{\gamma \in \Gamma}\gamma^\beta \sum_{\gamma'' \in \Gamma} \mathrm{Tr}(p_z(\gamma^{-1}\gamma'\gamma'')p_{z'}(\gamma'')) \\
&= 
\gamma'{}^\beta\sum_{\gamma_1,\gamma_2 \in \Gamma} \gamma_1^{-\beta}\gamma_2^\beta\mathrm{Tr}(p_z(\gamma_1)p_{z'}(\gamma_2)) \\
&= 
\gamma'{}^\beta\,\mathrm{Tr}(\rho_z^{-\beta}\rho_{z'}^\beta) \\
&= 
\dim_\beta(z)\,\tau_z^\beta(zz')\frac{\dim(z')\,\gamma'{}^\beta}{\dim_\beta(z')}
\end{align*}
by \eqref{Eq4.12}. Since $\kappa(z,z') = \tau_z^\beta(zz')$ and since $zz' = 0$ if and only if $m(z,z') = 0$, we conclude that 
\begin{align*}
\sum_{(z,\gamma) \in \widetilde{\mathfrak{Z}}_{n+1}} \tilde{\nu}(z,\gamma)\,\tilde{\mu}((z,\gamma),(z',\gamma)) 
&=
\frac{\dim(z')\,\gamma'{}^\beta}{\dim_\beta(z')}\sum_{z \in \mathfrak{Z}_{n+1}} \nu(z)\,\tau_z^\beta(zz') \\
&=
\frac{\dim(z')\,\gamma'{}^\beta}{\dim_\beta(z')}\nu(z') = \tilde{\nu}(z',\gamma'). 
\end{align*}
Hence $\tilde{\nu}$ enjoys item (i) 0f Lemma \ref{L4.5}. 

\medskip
Let $\tilde{\nu} \in H_1^+(\tilde{\mu})_\beta$ be arbitrarily chosen, and we will show that 
\[
\nu(z) := \frac{\dim_\beta(z)}{\dim(z)}\tilde{\nu}(z,1)
\]
defines an element of $H_1^+(\kappa)$. 

We first observe that 
\begin{align*} 
\sum_{z\in\mathfrak{Z}_{n+1}} \nu(z)\kappa(z,z') 
&= 
\sum_{z\in\mathfrak{Z}_{n+1}} \tilde{\nu}(z,1) \frac{\mathrm{Tr}(\rho_z^{-\beta}z')}{\dim(z)} \\
&=
\sum_{z\in\mathfrak{Z}_{n+1}} \tilde{\nu}(z,1) \frac{\mathrm{Tr}(\rho_z^{-\beta}\rho_{z'}^\beta)\dim_\beta(z')}{\dim(z)\,\mathrm{Tr}(\rho_{z'}^{-\beta}\rho_{z'}^\beta)} \qquad \text{(use \eqref{Eq4.12})} \\
&=
\sum_{z\in\mathfrak{Z}_{n+1}} \tilde{\nu}(z,1) \sum_{\gamma \in \Gamma} \gamma^{-\beta}\frac{\mathrm{Tr}(q_{(z,z')}(\gamma))}{\dim(z)}\frac{\dim_\beta(z')}{\dim(z')} \\
&=
\frac{\dim_\beta(z')}{\dim(z')}\sum_{z\in\mathfrak{Z}_{n+1}} \tilde{\nu}(z,1) \sum_{\gamma \in \Gamma} \gamma^\beta\,\frac{\mathrm{Tr}(q_{(z,z')}(\gamma^{-1}))}{\dim(z)} \\
&=
\frac{\dim_\beta(z')}{\dim(z')}\sum_{(z,\gamma) \in \widetilde{\mathfrak{Z}}_{n+1}} \tilde{\nu}(z,\gamma)\tilde{\mu}((z,\gamma),(z',1)) \quad \text{(by Proposition \ref{P4.3})}\\
&=
\frac{\dim_\beta(z')}{\dim(z')}\tilde{\nu}(z',1) = \nu(z'). 
\end{align*}
Hence $\nu$ is $\kappa$-harmonic. Moreover, item (iii) of Lemma \ref{L4.5}, a requirement of $\tilde{\nu}$, clearly shows that $\nu$ is normalized. Hence we are done. 
\end{proof} 

So far, we have obtained the following diagram: 
\[
\xymatrix{ 
K_\beta^\mathrm{ln}(\alpha^t) \ar@{<->}[r]^{(a)\ } \ar@{<->}[d]_{(b)} & TW_\beta^\mathrm{ln}(\widetilde{\alpha}^\gamma) \ar[d]^{(c)} \\
H_1^+(\kappa) \ar@{<->}[r]_{(d)\ } & H_1^+(\tilde{\mu})_\beta,   
}
\]
where the correspondences (a)-(d) have been established as follows. 
\begin{itemize}
\item[(a)] Theorem \ref{T3.7}; 
\item[(b)] \cite[Proposition 3.7]{Ueda:preprint20}; 
\item[(c)] Lemma \ref{L4.5}; 
\item[(d)] Theorem \ref{T4.7}. 
\end{itemize}
We will examine the composition of maps (d) $\to$ (b) $\to$ (a).  

Let $\tilde{\nu} \in H_1^+(\tilde{\mu},\beta)$ be arbitrarily chosen. By Theorem \ref{T4.7} we have a unique $\nu \in H_1^+(\kappa)$ with 
\[
\nu(z) = \frac{\dim_\beta(z)}{\dim(z)}\tilde{\nu}(z,1), \qquad z \in \mathfrak{Z}.
\] 
Then, by \cite[Proposition 3.7]{Ueda:preprint20} we have a unique $\omega \in K_\beta^\mathrm{ln}(\alpha^t)$ so that 
\[
\omega(a) = \sum_{z \in \mathfrak{Z}_n} \nu(z)\,\tau_z^\beta(za)=\sum_{z \in \mathfrak{Z}_n}\frac{\dim_\beta(z)}{\dim(z)}\tilde{\nu}(z,1)\,\tau_z^\beta(za), \qquad a \in A_n,\ n\geq0.
\] 
Finally, with this $\omega$ we obtain a unique $\tau_\omega = \mathrm{tr}_\beta\circ \widetilde{E}_\omega \in TW_\beta^\mathrm{ln}(\widetilde{\alpha}^\gamma)$ by Theorem \ref{T3.7}. Consequently, the resulting $\tau_\omega$ enjoys 
\[
\tilde{\nu}[\tau_\omega](z,\gamma) = \tau_\omega(\Phi_n^{-1}(z\otimes\delta_\gamma)) = \mathrm{tr}_\beta(E_\omega(\Phi_n^{-1}(z\otimes\delta_\gamma))). 
\]
By the proof of Lemma \ref{L4.2} we observe that 
\[
\Phi_n^{-1}(z\otimes\delta_\gamma) 
= 
\int_G \overline{\langle\gamma,g\rangle}\,\pi_{\alpha_n}(u_z(g)^*)\,\lambda(g)\,dg \\
= 
\sum_{\gamma_1^{-1}\gamma_2 = \gamma} \pi_{\alpha_n}(p_z(\gamma_1)) e_{\gamma_2}. 
\]
Consequently, we obtain that 
\begin{align*}
\tilde{\nu}[\tau_\omega](z,\gamma) 
&= 
\sum_{\gamma_1^{-1}\gamma_2=\gamma} \dim_\beta(z)\,\tilde{\nu}(z,1)\frac{1}{\dim(z)} \tau_z^\beta(p_z(\gamma_1))\,\gamma_2^\beta \\
&= 
\sum_{\gamma_1^{-1}\gamma_2=\gamma} \dim_\beta(z)\,\tilde{\nu}(z,1)\frac{1}{\dim(z)}\,\frac{\gamma_1^{-\beta}\mathrm{Tr}(p_z(\gamma_1))}{\dim_\beta(z)}\,\gamma_2^\beta \\
&=
\tilde{\nu}(z,1)\,\gamma^\beta\,\sum_{\gamma_1} \frac{\mathrm{Tr}(p_z(\gamma_1))}{\dim(z)} \\
&= \tilde{\nu}(z,1)\gamma^\beta 
= \tilde{\nu}(z,\gamma). 
\end{align*}
It follows that the composition of maps (d) $\to$ (b) $\to$ (a) is exactly inverse to map (c). Hence we have arrived at the following theorem: 

\begin{theorem}\label{T4.8} The mapping $\tau \in TW_\beta^\mathrm{ln}(\widetilde{\alpha}^\gamma) \mapsto \tilde{\nu}[\tau] \in H_1^+(\tilde{\mu})_\beta$ obtained in Lemma \ref{L4.5} is an affine-isomorphism. 
\end{theorem}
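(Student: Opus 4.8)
The plan is to read the result off the commutative diagram displayed immediately before the statement, using that three of its four edges are already known to be affine isomorphisms. By Theorem~\ref{T3.7}, \cite[Proposition 3.7]{Ueda:preprint20}, and Theorem~\ref{T4.7}, the maps (a), (b), (d) are affine isomorphisms, so their composite
\[
\Psi := (a)\circ(b)^{-1}\circ(d)^{-1} : H_1^+(\tilde{\mu})_\beta \longrightarrow TW_\beta^\mathrm{ln}(\widetilde{\alpha}^\gamma)
\]
is again an affine isomorphism. Map (c), namely $\tau \mapsto \tilde{\nu}[\tau]$, is itself affine: by Lemma~\ref{L4.5} one has $\tilde{\nu}[\tau](z,\gamma) = \tau(\Phi_n^{-1}(z\otimes\delta_\gamma))$, which is affine (indeed linear) in $\tau$, and convex combinations of scaling traces are carried to convex combinations of harmonic functions. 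Thus every arrow in sight is affine, and only the bijectivity of (c) remains to be pinned down.

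The decisive input is the explicit computation carried out just before the statement. Tracing an arbitrary $\tilde{\nu} \in H_1^+(\tilde{\mu})_\beta$ through $(d)^{-1}$ to $\nu \in H_1^+(\kappa)$, then through $(b)^{-1}$ to $\omega \in K_\beta^\mathrm{ln}(\alpha^t)$, and finally through $(a)$ to $\tau_\omega \in TW_\beta^\mathrm{ln}(\widetilde{\alpha}^\gamma)$, that calculation evaluates $\tilde{\nu}[\tau_\omega](z,\gamma)$ and recovers exactly $\tilde{\nu}(z,\gamma)$. In the language above, this is precisely the identity $(c)\circ\Psi = \mathrm{id}$ on $H_1^+(\tilde{\mu})_\beta$.

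From here the conclusion is purely formal. Since $\Psi$ is a bijection and $(c)\circ\Psi = \mathrm{id}$, any $\tau \in TW_\beta^\mathrm{ln}(\widetilde{\alpha}^\gamma)$ may be written as $\tau = \Psi(\tilde{\nu})$, whence $(c)(\tau) = \tilde{\nu}$ and so $\Psi((c)(\tau)) = \tau$ as well; therefore $(c) = \Psi^{-1}$. Being the inverse of an affine isomorphism, (c) is an affine isomorphism, which is the assertion. The only substantive ingredient is the identity $\tilde{\nu}[\tau_\omega] = \tilde{\nu}$, and I expect its verification — resting on the explicit form of $\Phi_n^{-1}(z\otimes\delta_\gamma)$ from the proof of Lemma~\ref{L4.2} together with the uniqueness/KMS relation \eqref{Eq4.12} controlling $\tau_z^\beta(p_z(\gamma_1))$ — to be the main obstacle; once it is in hand, the remainder is bookkeeping on the diagram.
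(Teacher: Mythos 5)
Your proposal is correct and follows essentially the same route as the paper: the paper's own proof of Theorem~\ref{T4.8} consists precisely of the computation preceding the statement, which shows that the composition (d) $\to$ (b) $\to$ (a) is a right inverse of map (c), i.e.\ $\tilde{\nu}[\tau_\omega]=\tilde{\nu}$, and then concludes that (c) is the inverse of that composite affine bijection. If anything, your explicit bookkeeping step (deducing $(c)=\Psi^{-1}$ from $(c)\circ\Psi=\mathrm{id}$ and bijectivity of $\Psi$, and noting linearity of $\tau\mapsto\tau(\Phi_n^{-1}(z\otimes\delta_\gamma))$) spells out a formal point the paper leaves implicit.
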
 

\subsection{Weights and Weight-extended branching graph of link}\label{S4.3} 

The reader might ask how to construct the branching graph $(\widetilde{\mathfrak{Z}},\tilde{m})$ with $\Gamma$-action from a given link $(\mathfrak{Z},\kappa)$ rather than an inductive $C^*$-flow $\alpha^t$. Such a construction can be given by using \cite[section 9]{Ueda:preprint20}; namely, one first constructs an inductive $C^*$-flow from $(\mathfrak{Z},\kappa)$, and then apply the discussions so far in this paper to it. Here we will translate this procedure without appealing to any $C^*$-flows. This seems to be of independent interest. 

\medskip 
We first remark that the analysis of links does not depend on multiplicities on edges; hence we will ignore, for simplicity, the multiplicity function over $\mathfrak{Z}$. Here one should remark that $m(z,z') > 0$ if and only if $\kappa(z,z')>0$, and hence the edges $(z,z') \in \bigsqcup_{n\geq0}\mathfrak{Z}_{n+1}\times\mathfrak{Z}_n$ are determined by the positivity of $\kappa(z,z')$. Moreover, we have assumed that 
\[
\bigcup_{z' \in \mathfrak{Z}_n}\{z \in \mathfrak{Z}_{n+1}; \kappa(z,z') > 0\} = \mathfrak{Z}_{n+1}, \qquad \bigcup_{z \in \mathfrak{Z}_{n+1}}\{z' \in \mathfrak{Z}_n; \kappa(z,z') > 0\} = \mathfrak{Z}_n
\] 
for all $n\geq0$. (Informally, this assumption corresponds to that $A_n \hookrightarrow A_{n+1}$ is a unital embedding for every $n\geq0$.) We will assume that our link satisifies these requirements. 

Since the definition of $\kappa$ in \eqref{Eq2.3} involves the inverse temperature $\beta$, we have to specify this $\beta$. In what follows, we informally think that the inverse temperature has been selected to be $\beta=-1$. 

\begin{definition}\label{D4.9} For each $z \in \mathfrak{Z}_n$, $n \geq 0$ we define its \emph{$\kappa$-dimension} by   
\begin{equation}\label{Eq4.13} 
\kappa\mathchar`-\dim(z) := \sqrt{\sum_{\substack{z_k \in \mathfrak{Z}_k (k=0,1\dots,n) \\ z_0=1, z_n=z \\ \kappa(z_{k+1},z_k) > 0\, (k=0,1,\dots,n-1)}} \frac{1}{\kappa(z_n,z_{n-1})\kappa(z_{n-1},z_{n-2})\cdots\kappa(z_1,z_0)}}    
\end{equation}
with $\kappa\mathchar`-\dim(1) := 1$. We then define the \emph{weight} at $(z,z') \in \mathfrak{Z}_{n+1}\times\mathfrak{Z}_n$, $n \geq 0$ by 
\begin{equation}\label{Eq4.14}
\rho(z,z') :=  \kappa\mathchar`-\dim(z)\,\kappa(z,z')\frac{1}{\kappa\mathchar`-\dim(z')}.
\end{equation}
The countable discrete subgroup $\Gamma(\kappa)$ of $\mathbb{R}_+^\times$ generated by all the positive weights $\rho(z,z') >0$ with $(z,z') \in \mathfrak{Z}_{n+1}\times\mathfrak{Z}_n$, $n\geq0$ is called the \emph{weight group} of $\kappa$. 
\end{definition}

By definition, $\rho(z,z') > 0$ if and only if $\kappa(z,z') = 1$. This construction is motivated from that in \cite[Proposition 9.5]{Ueda:preprint20} together with \eqref{Eq4.5},\eqref{Eq4.6}. 

Here is a claim, which informally corresponds to $\mathrm{Tr}(\rho_z) = \mathrm{Tr}(\rho_z^{-1})$. 

\begin{lemma}\label{L4.9} We have
\begin{align*}
\kappa\mathchar`-\dim(z) 
&= 
\sum_{\substack{z_k \in \mathfrak{Z}_k (k=0,1\dots,n) \\ z_0=1, z_n=z}} \rho(z_n,z_{n-1})\rho(z_{n-1},z_{n-2})\cdots\rho(z_1,z_0) \\
&= 
\sum_{\substack{z_k \in \mathfrak{Z}_k (k=0,1\dots,n) \\ z_0=1, z_n=z \\ \kappa(z_{k+1},z_k)>0\,(k=0,1\dots,n-1)}} \frac{1}{\rho(z_n,z_{n-1})\rho(z_{n-1},z_{n-2})\cdots\rho(z_1,z_0)}
\end{align*}
for every $z \in \mathfrak{Z}_n$, $n\geq1$. 
\end{lemma}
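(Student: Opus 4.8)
The plan is to prove both equalities by substituting the definition \eqref{Eq4.14} of $\rho(z,z')$ and exploiting a telescoping cancellation of the $\kappa\mathchar`-\dim$ factors along each path. Fix $z \in \mathfrak{Z}_n$ and a path $z_0 = 1, z_1, \dots, z_n = z$ with $z_k \in \mathfrak{Z}_k$, and write $d(\,\cdot\,) := \kappa\mathchar`-\dim(\,\cdot\,)$ for brevity. Then \eqref{Eq4.14} gives
\[
\prod_{k=0}^{n-1} \rho(z_{k+1},z_k) = \Bigg(\prod_{k=0}^{n-1} \frac{d(z_{k+1})}{d(z_k)}\Bigg)\prod_{k=0}^{n-1}\kappa(z_{k+1},z_k) = \frac{d(z_n)}{d(z_0)}\prod_{k=0}^{n-1}\kappa(z_{k+1},z_k) = d(z)\prod_{k=0}^{n-1}\kappa(z_{k+1},z_k),
\]
since the middle product telescopes and $d(z_0) = d(1) = 1$. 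Note that a path containing an edge with $\kappa(z_{k+1},z_k) = 0$ contributes $0$ on both sides, because $\rho(z_{k+1},z_k) = 0$ there, so the unrestricted sum in the first asserted formula agrees with the sum over paths all of whose edges have $\kappa > 0$.

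For the first equality it then remains to show that $\sum_{\mathrm{paths}}\prod_{k=0}^{n-1}\kappa(z_{k+1},z_k) = 1$, the sum running over all paths from $1$ to $z$. I would prove this by induction on $n$ via the recursion $P_n(z) = \sum_{z'\in\mathfrak{Z}_{n-1}}\kappa(z,z')\,P_{n-1}(z')$, where $P_n(z)$ denotes this path sum (so $P_1(z_1) = \kappa(z_1,1) = 1$); the inductive hypothesis $P_{n-1}\equiv 1$ together with the row-normalization $\sum_{z'\in\mathfrak{Z}_{n-1}}\kappa(z,z') = 1$ of the link then yields $P_n(z) = 1$. Multiplying the telescoped identity of the first paragraph by $\sum_{\mathrm{paths}}$ and inserting $P_n(z) = 1$ gives the first formula.

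For the second equality I would take reciprocals of the telescoped product, obtaining
\[
\frac{1}{\rho(z_n,z_{n-1})\cdots\rho(z_1,z_0)} = \frac{1}{d(z)}\,\frac{1}{\kappa(z_n,z_{n-1})\cdots\kappa(z_1,z_0)}
\]
for every path all of whose edges have $\kappa > 0$. Summing over all such paths and comparing with \eqref{Eq4.13}, where the quantity under the square root is exactly $\sum_{\mathrm{paths}} 1/\prod_k\kappa(z_{k+1},z_k) = d(z)^2$, gives $\sum 1/\prod\rho = d(z)^{-1}\cdot d(z)^2 = d(z)$, as claimed.

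The only step requiring genuine input, rather than bookkeeping, is the row-normalization $\sum_{z'\in\mathfrak{Z}_k}\kappa(w,z') = 1$ for $w\in\mathfrak{Z}_{k+1}$ used in the induction of the second paragraph; everything else is the telescoping of $\kappa\mathchar`-\dim$ factors and the definition \eqref{Eq4.13}. In the operator-algebraic reading this normalization is immediate from \eqref{Eq2.3}, since $\sum_{z'\in\mathfrak{Z}_k} z' = 1$ forces $\sum_{z'}\kappa(w,z') = \tau_w^\beta(w) = 1$; in the purely combinatorial reading of section \ref{S4.3} it should be recorded as part of the standing hypotheses defining $\kappa$ as a link (Markov kernel).
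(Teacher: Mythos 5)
Your proof is correct, and it reorganizes the paper's argument rather than reproducing it. The paper proves both identities at once by induction on $n$: the base case is $\kappa\mathchar`-\dim(z)=\kappa(z,1)=\rho(z,1)=1$ on $\mathfrak{Z}_1$; the first identity is deduced (with details omitted) from the row normalization $\sum_{z'\in\mathfrak{Z}_n}\kappa(z,z')=1$; and the second is obtained by peeling off only the last edge, writing $1/\rho(z_{n+1},z_n)=\kappa\mathchar`-\dim(z_n)\big/\bigl(\kappa\mathchar`-\dim(z)\,\kappa(z_{n+1},z_n)\bigr)$, inserting the induction hypothesis for paths ending at $z_n$, and then converting $\kappa\mathchar`-\dim(z_n)^2$ back into a path sum via \eqref{Eq4.13} so as to reassemble the full sum for $z$. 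You instead telescope the whole product along each path at once, $\prod_{k}\rho(z_{k+1},z_k)=\kappa\mathchar`-\dim(z)\prod_k\kappa(z_{k+1},z_k)$, so the first identity reduces to the Markov normalization $\sum_{\mathrm{paths}}\prod_k\kappa(z_{k+1},z_k)=1$ (your induction for this is the same use of row normalization as the paper's), while the second identity becomes an immediate consequence of the definition \eqref{Eq4.13}, with no induction at all --- the paper's inductive step is precisely your cancellation carried out one edge at a time. Your version buys a cleaner separation of the two inputs (link normalization for the first equality, the defining formula for $\kappa\mathchar`-\dim(z)^2$ for the second), and the bookkeeping points you make explicit --- that paths through a $\kappa=0$ edge contribute $0$ to the unrestricted first sum because $\rho$ vanishes there by \eqref{Eq4.14}, and that the intermediate $\kappa\mathchar`-\dim(z_k)$ you divide by are positive, which follows from the standing connectivity assumptions on the link --- are exactly the facts the paper's terser proof leaves implicit.
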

\begin{proof} 
This is easily shown by induction on $n$. Clearly, $\kappa\mathchar`-\dim(z) = \kappa(z,1) = \rho(z,1) = 1$ holds for every $z \in \mathfrak{Z}_1$. The induction procedure from $n$ to $n+1$ goes as follows. Using $\sum_{z' \in \mathfrak{Z}_n} \kappa(z,z') = 1$ for every $z \in \mathfrak{Z}_{n+1}$, a property of links, we easily see that the first identity holds true. Compute 
\begin{align*} 
&\sum_{\substack{z_k \in \mathfrak{Z}_k (k=0,1\dots,n+1) \\ z_0=1, z_{n+1}=z \\ \kappa(z_{k+1},z_k)>0\,(k=0,1\dots,n)}} \frac{1}{\rho(z_{n+1},z_n)\rho(z_n,z_{n-1})\cdots\rho(z_1,z_0)} \\
&\quad=
\frac{1}{\kappa\mathchar`-\dim(z)}\sum_{\substack{ z_n \in \mathfrak{Z}_n \\ \kappa(z,z_n) > 0}} \frac{\kappa\mathchar`-\dim(z_n)}{\kappa(z,z_n)} \sum_{\substack{z_k \in \mathfrak{Z}_k (k=0,1\dots,n-1) \\ z_0=1 \\ \kappa(z_{k+1},z_k)>0\, (k=0,1\dots,n-1)}} \frac{1}{\rho(z_n,z_{n-1})\cdots\rho(z_1,z_0)} \\
&= 
\frac{1}{\kappa\mathchar`-\dim(z)}\sum_{\substack{ z_n \in \mathfrak{Z}_n \\ \kappa(z,z_n) > 0}} \frac{\kappa\mathchar`-\dim(z_n)^2}{\kappa(z,z_n)} \qquad\qquad \text{(by induction hypothesis)} \\
&\quad=
\frac{1}{\kappa\mathchar`-\dim(z)}\sum_{\substack{ z_n \in \mathfrak{Z}_n \\ \kappa(z,z_n) > 0}} \frac{1}{\kappa(z,z_n)} \sum_{\substack{z_k \in \mathfrak{Z}_k (k=0,1\dots,n-1) \\ z_0=1 \\\kappa(z_{k+1},z_k)>0\, (k=0,1\dots,n-1)}} \frac{1}{\kappa(z_n,z_{n-1})\cdots\kappa(z_1,z_0)} \\
&\quad= 
\frac{1}{\kappa\mathchar`-\dim(z)} \sum_{\substack{z_k \in \mathfrak{Z}_k (k=0,1\dots,n+1) \\ z_0=1, z_{n+1}=z \\ \kappa(z_{k+1},z_k)>0\, (k=0,1\dots,n)}} \frac{1}{\kappa(z_{n+1},z_n)\kappa(z_n,z_{n-1})\cdots\kappa(z_1,z_0)} \\
&\quad=
\kappa\mathchar`-\dim(z).  
\end{align*}
Hence we are done.  
\end{proof}

Proposition \ref{P4.3} suggests us to define the desired new branching graph as follows. 

\begin{definition}\label{D4.xx} The \emph{weight-extended branching graph} $(\widetilde{\mathfrak{Z}},\tilde{m})$ of $\kappa$ is defined to be $\widetilde{\mathfrak{Z}} = \bigsqcup_{n\geq0} \widetilde{\mathfrak{Z}}_n$ with $\widetilde{\mathfrak{Z}}_n := \mathfrak{Z}_n\times\Gamma$ and   
\begin{equation}\label{Eq4.15} 
\tilde{m}((z,\gamma),(z',\gamma')) := 
\begin{cases} 
1 & (\gamma^{-1}\gamma' = \rho(z,z')>0), \\
0 & (\text{otherwise}). 
\end{cases}
\end{equation}
\end{definition}

This multiplicity function $\tilde{m}$ is invariant under the translation action of $\Gamma$ on the right coordinate, that is, $\tilde{m}\circ(T_\gamma^{-1}\times T_\gamma^{-1}) = \tilde{m}$ for every $\gamma\in\Gamma$. 

\medskip
Since we have implicitly assumed that all $m(z,z')$ are either $0$ or $1$, the \emph{dimension} $\dim(z)$ of $z \in \mathfrak{Z}_n \subset \mathfrak{Z}$ in this context should be the total number of paths $(z_n,\dots,z_1,1)$ with $z_k \in \mathfrak{Z}_k$ and $\kappa(z_{k+1},z_k) > 0$. The \emph{dimension} $\dim(z,\gamma)$ of $(z,\gamma) \in \widetilde{\mathfrak{Z}}_n$ is defined to be the total number of paths ending at $(z,\gamma)$ and starting in the $0$th stage $\widetilde{\mathfrak{Z}}_0$ (which is no longer singleton). Here is a lemma. 

\begin{lemma}\label{L4.10} 
$\dim(z,\gamma) = \dim(z)$ always holds. 
\end{lemma}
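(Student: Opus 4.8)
The plan is to exhibit an explicit bijection between the paths counted by $\dim(z,\gamma)$ and those counted by $\dim(z)$, realized by forgetting the $\Gamma$-coordinate. Fix $(z,\gamma)\in\widetilde{\mathfrak{Z}}_n$. A path counted by $\dim(z,\gamma)$ is a sequence $\big((z_k,\gamma_k)\big)_{k=0}^{n}$ with $(z_n,\gamma_n)=(z,\gamma)$, with $(z_0,\gamma_0)\in\widetilde{\mathfrak{Z}}_0$, and with $\tilde{m}\big((z_{k+1},\gamma_{k+1}),(z_k,\gamma_k)\big)=1$ for $k=0,\dots,n-1$. By Definition \ref{D4.xx} this last condition is equivalent to requiring $\rho(z_{k+1},z_k)>0$ (equivalently $\kappa(z_{k+1},z_k)>0$) together with $\gamma_k=\gamma_{k+1}\,\rho(z_{k+1},z_k)$.

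First I would record the key observation: once the underlying vertices $(z_k)_{k=0}^n$ and the top label $\gamma_n=\gamma$ are fixed, the remaining labels $\gamma_{n-1},\dots,\gamma_0$ are \emph{uniquely} determined by the backward recursion $\gamma_k=\gamma_{k+1}\,\rho(z_{k+1},z_k)$, i.e. $\gamma_k=\gamma\,\rho(z_n,z_{n-1})\rho(z_{n-1},z_{n-2})\cdots\rho(z_{k+1},z_k)$. Consequently the projection $\big((z_k,\gamma_k)\big)_k\mapsto (z_k)_k$ is injective on the set of paths ending at $(z,\gamma)$, and its image consists of paths $(z_k)_{k=0}^n$ in $\mathfrak{Z}$ with $z_n=z$ and $\kappa(z_{k+1},z_k)>0$ for all $k$, which is exactly the collection counted by $\dim(z)$.

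Next I would verify surjectivity of this projection. Given any path $(z_k)_{k=0}^n$ in $\mathfrak{Z}$ with $z_n=z$ and $\kappa(z_{k+1},z_k)>0$ throughout, define $\gamma_n:=\gamma$ and then $\gamma_k:=\gamma_{k+1}\,\rho(z_{k+1},z_k)$ for $k=n-1,\dots,0$. Each edge then satisfies $\tilde{m}=1$ by construction, and the initial vertex is $(z_0,\gamma_0)=(1,\gamma_0)$, which lies in $\widetilde{\mathfrak{Z}}_0=\mathfrak{Z}_0\times\Gamma=\{1\}\times\Gamma$ no matter what value $\gamma_0$ takes. Hence every $\mathfrak{Z}$-path lifts to a unique $\widetilde{\mathfrak{Z}}$-path ending at $(z,\gamma)$, the projection is a bijection, and counting the two sides yields $\dim(z,\gamma)=\dim(z)$.

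The calculation is essentially immediate, and the only step I would flag as a genuine subtlety is the surjectivity check: it works precisely because $\mathfrak{Z}_0$ is a singleton, so the full fibre $\{1\}\times\Gamma$ is available as the starting stage and the backward-determined label $\gamma_0$ always lands in $\widetilde{\mathfrak{Z}}_0$ without any constraint. Were $\widetilde{\mathfrak{Z}}_0$ a proper subset of $\mathfrak{Z}_0\times\Gamma$, the lift could fail to start legally, so this is the one point deserving explicit attention.
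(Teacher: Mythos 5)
Your proof is correct and follows essentially the same route as the paper: both arguments observe that the $\Gamma$-labels along a path in $\widetilde{\mathfrak{Z}}$ are uniquely determined by the underlying path in $\mathfrak{Z}$ together with the terminal label $\gamma$, via the recursion $\gamma_k=\gamma_{k+1}\,\rho(z_{k+1},z_k)$, so that forgetting the $\Gamma$-coordinate is a bijection onto the paths counted by $\dim(z)$. Your explicit surjectivity check (that the forced starting label $(1,\gamma_0)$ always lies in $\widetilde{\mathfrak{Z}}_0=\{1\}\times\Gamma$) is implicit in the paper's proof, and making it explicit is a fair refinement rather than a deviation.
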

\begin{proof}
Let $((z_n,\gamma_n),\dots,(z_1,\gamma_1),(1,\gamma_0))$ be a path in $\widetilde{\mathfrak{Z}}$ ending at $(z_n,\gamma_n)$ and starting in $\widetilde{\mathfrak{Z}}_0$. Then, $m(z_{k+1},z_k) =1$ holds for every $k=0,\dots,n-1$ with $z_0:=1$. Moreover, $\gamma_1 = \gamma_0/\rho(z_1,1)$, $\gamma_2 = \gamma_1/\rho(z_2,z_1) = \gamma_0/\rho(z_2,z_1)\rho(z_1,1)$, \dots, $\gamma_n = \gamma_0/\rho(z_n,z_{n-1})\cdots\rho(z_1,1)$ should hold. This means that each path is uniquely determined by the path $(z_n,\dots,z_1,1)$ in $\mathfrak{Z}$ and the relation $\gamma_0=\gamma_n\rho(z_n,z_{n-1})\cdots\rho(z_1,1)$. Hence,  the desired assertion must hold.  
\end{proof}

This lemma shows that the standard link $\tilde{\mu}$ over $(\widetilde{\mathfrak{Z}},\tilde{m})$ should be 
\begin{equation}\label{Eq4.16}
\tilde{\mu}((z,\gamma),(z',\gamma')) 
= 
\begin{cases}
\frac{\dim(z')}{\dim(z)} & (\tilde{m}((z,\gamma),(z',\gamma'))=1), \\
0 & (\text{otherwise}). 
\end{cases}
\end{equation}  
With the preparation so far, Theorem \ref{T4.7} actually holds as it is with $\beta=-1$ and $\dim_\beta(z) = \kappa\mathchar`-\dim(z)$ in the present setup. Its proof is an easy exercise now.

\section{Relation to $K_0$-groups}\label{S5} 

$K_0$-groups or dimension groups play a role of representation rings in asymptotic representation theory, but they do not fit spherical representations for $C^*$-flows (nor general links). Thus, we introduced, in our previous paper \cite{Ueda:SIGMA22}, a certain replacement of $K_0$-group by means of operator systems to investigate inductive $C^*$-flows. Here, we will give a way to connect the locally normal $(\alpha^t,\beta)$-KMS states $K_\beta^\mathrm{ln}(\alpha^t)$ to $K$-theory of $\rho$-extension $(\widetilde{\alpha} : \Gamma \curvearrowright \widetilde{A} = \varinjlim\widetilde{A}_n)$ under the assumption that all $\dim(z) < +\infty$. 

\medskip
We will investigate the $K_0$-group $K_0(\widetilde{A})$ and its positive cone $K_0(\widetilde{A})_+$ of $\widetilde{A}=\varinjlim \widetilde{A}_n$. By a standard fact on $K$-theory (see e.g., \cite[Proposition 8.1]{Effros:CBMSBook}) we have $K_0(\widetilde{A}) = \varinjlim K_0(\widetilde{A}_n)$ and $K_0(\widetilde{A}_n)_+ =\varinjlim K_0(\widetilde{A}_n)_+$. Thus, we first have to calculate each pair $K_0(\widetilde{A}_n)_+ \subset K_0(\widetilde{A}_n)$ and then have to do each embedding $K_0(\widetilde{A}_n) \hookrightarrow K_0(\widetilde{A}_{n+1})$. 

The first task was completed by just using \cite[Proposition 6.1]{Reich:K-theory01} as follows. It is convenient to transform each $\widetilde{A}_n$ to 
\[
\Phi_n(\widetilde{A}_n) = \bigoplus_{(z,\gamma)\in\mathfrak{Z}_n} \overset{z\otimes\delta_\gamma}{B(\mathcal{H}_z)}
\]
by Lemma \ref{L4.1} with notation in Remark \ref{R4.4}. By \cite[Proposition 6.1(iv)]{Reich:K-theory01} the $K_0$-group $K_0(\Phi_n(\widetilde{A}_n))$ is isomorphic, by the dimension function $\mathrm{cdim}_n := \dim_{\mathcal{Z}(\Phi_n(\widetilde{A}_n))}$ induced from the center-valued trace $\mathrm{ctr}_n$ ($=\mathrm{tr}_{\mathcal{Z}(\Phi_n(\widetilde{A}_n))}$ in \cite{Reich:K-theory01}), to
\begin{equation}\label{Eq5.1}
\begin{aligned}
&\tilde{\prod_{(z,\gamma)\in \widetilde{\mathfrak{Z}}_n}} \frac{\mathbb{Z}}{\dim(z)} \\
&\qquad:= 
\Bigg\{ f : \widetilde{\mathfrak{Z}}_n \to \mathbb{Q}\ ;\ \text{$\displaystyle f(z,\gamma) \in \frac{\mathbb{Z}}{\dim(z)}$ for each $(z,\gamma) \in \widetilde{\mathfrak{Z}}_n$ and $\displaystyle \sup_{(z,\gamma)\in\widetilde{\mathfrak{Z}}_n} |f(z,\gamma)| < +\infty$} \Bigg\},  
\end{aligned}
\end{equation} 
which sits in the center $\ell^\infty(\widetilde{\mathfrak{Z}}_n)=\mathcal{Z}(\Phi_n(\widetilde{A}_n))$. Here this identification of the center is given by $\delta_{(z,\gamma)} = z\otimes\delta_\gamma$. We will take a closer look at $\mathrm{cdim}_n$. In this case, the $K_0$-group is the Grothendieck group of the Murray--von Neumann equivalence classes $[P]_n$ of projections in $\mathbb{M}_\infty(\Phi_n(\widetilde{A}_n)) := \bigcup_{m\geq1} M_m(\mathbb{C})\otimes\Phi_n(\widetilde{A}_n)$, where the embedding $M_m(\mathbb{C})\otimes\Phi_n(\widetilde{A}_n) \hookrightarrow M_{m+1}(\mathbb{C})\otimes\Phi_n(\widetilde{A}_n)$ is the upper corner one. The addition (semigroup operation) on it is given by 
\[
[P]_n+[Q]_n := \Bigg[ \begin{bmatrix} P \\ & Q \end{bmatrix} \Bigg]_n.
\] 
Then, the mapping $[P]_n \mapsto (\mathrm{Tr}\otimes\mathrm{ctr}_n)(P)$ is well defined because $\mathrm{Tr}$ is the non-normalized trace. This mapping is nothing less than the dimension function $\mathrm{cdim}_n$. The commutative diagram in \cite[Proposition 6.1(ii)]{Reich:K-theory01} and the finiteness of the $W^*$-algebra in question show that the order arising from the positive cone $K_0(\Phi_n(\widetilde{A}_n))_+$ is the natural, point-wise one on $\ell^\infty(\widetilde{\mathfrak{Z}}_n)$. Hence $K_0(\Phi_n(\widetilde{A}_n))_+$ ($\subset K_0(\Phi_n(\widetilde{A}_n))$) is isomorphic via $\mathrm{cdim}_n$ to 
\begin{equation}\label{Eq5.2}
\Bigg[\tilde{\prod_{(z,\gamma)\in \widetilde{\mathfrak{Z}}_n}} \frac{\mathbb{Z}}{\dim(z)}\Bigg]_+ := \Bigg\{ f \in \tilde{\prod_{(z,\gamma)\in \widetilde{\mathfrak{Z}}_n}} \frac{\mathbb{Z}}{\dim(z)} \quad ;\quad \text{$f(z,\gamma) \geq 0$ for each $(z,\gamma) \in \widetilde{\mathfrak{Z}}_n$} \Bigg\}. 
\end{equation} 

We then investigate the embedding $K_0(\Phi_n(\widetilde{A}_n)) \hookrightarrow K_0(\Phi_n(\widetilde{A}_{n+1}))$ in description \eqref{Eq5.1}. The embedding is $\iota_{n+1,n}^*$ with $\iota_{n+1,n} = \Phi_{n+1}\circ\Phi_n^{-1}$ in Lemma \ref{L4.2}. Hence we need to compute 
\[
\iota_{n+1,n}^{**} := \mathrm{cdim}_{n+1}\circ\iota_{n+1,n}^*\circ(\mathrm{cdim}_{n})^{-1}
: 
\tilde{\prod_{(z,\gamma)\in \widetilde{\mathfrak{Z}}_n}} \frac{\mathbb{Z}}{\dim(z)} \to 
\tilde{\prod_{(z,\gamma)\in \widetilde{\mathfrak{Z}}_{n+1}}} \frac{\mathbb{Z}}{\dim(z)}.
\]
Since $\iota_{n+1,n}$ and the center-valued traces are normal, it suffices to compute this map against each $(1/\dim(z))\delta_{(z,\gamma)} = (1/\dim(z))\,z\otimes\delta_\gamma$ with $(z,\gamma)\in \widetilde{\mathfrak{Z}}_n$. Moreover, we observe that  
\[
\mathrm{cdim}_{n+1}\circ\iota_{n+1,n}^*\circ(\mathrm{cdim}_{n})^{-1}(z\otimes\delta_{\gamma})
=
\dim(z)\times
\mathrm{cdim}_{n+1}\circ\iota_{n+1,n}^*\circ(\mathrm{cdim}_{n})^{-1}\Big(\frac{1}{\dim(z)}z\otimes\delta_{\gamma}\Big)
\]
and hence 
\begin{align*}
\mathrm{cdim}_{n+1}\circ\iota_{n+1,n}^*\circ(\mathrm{cdim}_{n})^{-1}\Big(\frac{1}{\dim(z)}z\otimes\delta_{\gamma}\Big) 
&=
\frac{1}{\dim(z)}\,
\mathrm{cdim}_{n+1}\circ\iota_{n+1,n}^*\circ(\mathrm{cdim}_{n})^{-1}(z\otimes\delta_{\gamma}) \\
&=
\frac{1}{\dim(z)}\,
\mathrm{cdim}_{n+1}\circ\iota_{n+1,n}^*([z\otimes\delta_\gamma]_n) \\
&=
\frac{1}{\dim(z)}\,
\mathrm{cdim}_{n+1}([\iota_{n+1,n}(z\otimes\delta_\gamma)]_n) \\
&=
\frac{1}{\dim(z)}\,
\mathrm{ctr}_{n+1}(\iota_{n+1,n}(z\otimes\delta_\gamma)) \\
&=
\mathrm{ctr}_{n+1}\Big(\iota_{n+1,n}\Big(\frac{1}{\dim(z)}\, z\otimes\delta_\gamma\Big)\Big). 
\end{align*}
Therefore, we conclude that the desired embedding map $\iota_{n+1,n}^{**}$ is just the restriction of the normal map $\mathrm{ctr}_{n+1}\circ\iota_{n+1,n} : \mathcal{Z}(\Phi_n(\widetilde{A}_n)) \to \mathcal{Z}(\Phi_{n+1}(\widetilde{A}_{n+1}))$ to the range of $\mathrm{cdim}_n(K_0(\Phi_n(\widetilde{A}_n)))$. Actually, for an $f \in \tilde{\prod}_{(z,\gamma)\in \widetilde{\mathfrak{Z}}_n} \frac{\mathbb{Z}}{\dim(z)}$, we have 
\begin{equation}\label{Eq5.3}
\begin{aligned}
\iota_{n+1,n}^{**}(f)(z,\gamma) 
&=
\sum_{(z',\gamma')\in\widetilde{\mathfrak{Z}}_n}f(z',\gamma')\,\mathrm{ctr}_{n+1}(\iota_{n+1,n}(z'\otimes\delta_{\gamma'}))(z,\gamma) \\
&= 
\sum_{(z',\gamma')\in\widetilde{\mathfrak{Z}}_n}\tilde{\mu}((z,\gamma),(z',\gamma'))\,f(z',\gamma')
\end{aligned}
\end{equation} 
by \eqref{Eq4.10}. This computation shows that the embedding $\iota_{n+1,n}^{**}$ is the left-multiplication of $\infty\times\infty$ matrix 
\[
\begin{bmatrix} \tilde{\mu}((z,\gamma),(z',\gamma'))\end{bmatrix}_{\widetilde{\mathfrak{Z}}_{n+1}\times\widetilde{\mathfrak{Z}}_n}
\] 
in description \eqref{Eq5.1}. Since $\tilde{\mu}((z,\gamma),(z',\gamma')) \geq 0$, the embedding preserves the positivity. Summing up the discussions so far we conclude: 

\begin{proposition}\label{P5.1} The $(K_0(\widetilde{A}) \supset K_0(\widetilde{A})_+,[1])$ is computed as
\[
(\mathfrak{D} \supset \mathfrak{D}_+,1) := \varinjlim\Bigg(\tilde{\prod_{(z,\gamma)\in \widetilde{\mathfrak{Z}}_n}} \frac{\mathbb{Z}}{\dim(z)}\supset\Bigg[\tilde{\prod_{(z,\gamma)\in \widetilde{\mathfrak{Z}}_n}} \frac{\mathbb{Z}}{\dim(z)}\Bigg]_+,\mathbf{1}\Bigg)
\]
along the embeddings $\iota_{n+1,n}^{**} = \mathrm{ctr}_{n+1}\circ\iota_{n+1,n}$, $n=0,1,\dots$, where $\mathbf{1}$ is the constant function, i.e., $\mathbf{1}(z,\gamma) = 1$ for all $(z,\gamma)\in \widetilde{\mathfrak{Z}}_n$.  
\end{proposition}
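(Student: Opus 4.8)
The plan is to assemble Proposition \ref{P5.1} as a direct consequence of the three ingredients already established in the excerpt: the general fact that $K_0$ commutes with inductive limits, the explicit computation of each pair $K_0(\Phi_n(\widetilde{A}_n))_+ \subset K_0(\Phi_n(\widetilde{A}_n))$ via the center-valued dimension function $\mathrm{cdim}_n$, and the identification of the connecting maps $\iota_{n+1,n}^{**}$ with left-multiplication by the matrix $[\tilde{\mu}((z,\gamma),(z',\gamma'))]$. Since Reich's Proposition 6.1(iv) has already been invoked to give the isomorphisms of each stage with the truncated product $\tilde{\prod}_{(z,\gamma)} \mathbb{Z}/\dim(z)$ and its positive cone, and since equation \eqref{Eq5.3} has already pinned down the connecting maps on the nose, the work that remains is essentially functorial bookkeeping.

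First I would recall that by \cite[Proposition 8.1]{Effros:CBMSBook} (continuity of $K_0$ for AF-type inductive limits of $W^*$-algebras) one has $K_0(\widetilde{A}) = \varinjlim K_0(\widetilde{A}_n)$ together with $K_0(\widetilde{A})_+ = \varinjlim K_0(\widetilde{A}_n)_+$, with the connecting maps being the $\iota_{n+1,n}^*$ induced by the embeddings $\widetilde{A}_n \hookrightarrow \widetilde{A}_{n+1}$. Next I would transport this inductive system across the isomorphisms $\mathrm{cdim}_n$: since $\mathrm{cdim}_n$ is an order-isomorphism of $K_0(\Phi_n(\widetilde{A}_n))$ onto $\tilde{\prod}_{(z,\gamma)} \mathbb{Z}/\dim(z)$ carrying $K_0(\Phi_n(\widetilde{A}_n))_+$ onto the cone \eqref{Eq5.2}, and since the squares relating $\iota_{n+1,n}^*$ to $\iota_{n+1,n}^{**}$ commute by the very definition $\iota_{n+1,n}^{**} := \mathrm{cdim}_{n+1}\circ\iota_{n+1,n}^*\circ(\mathrm{cdim}_n)^{-1}$, the family $\{\mathrm{cdim}_n\}$ constitutes an isomorphism of inductive systems. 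The universal property of the inductive limit then yields an order-isomorphism $\varinjlim K_0(\widetilde{A}_n) \cong \varinjlim(\tilde{\prod}_{(z,\gamma)}\mathbb{Z}/\dim(z))$ identifying positive cones, which is precisely $\mathfrak{D} \supset \mathfrak{D}_+$. Finally, the order unit $[1] \in K_0(\widetilde{A})$ is the class of the identity; since $\mathrm{cdim}_n$ sends the identity of $\Phi_n(\widetilde{A}_n)$ to the constant function $\mathbf{1}$ (because $\mathrm{ctr}_n(1)(z,\gamma) = \mathrm{Tr}(1_{B(\mathcal{H}_z)})/\dim(z) = 1$), the order unit is carried to $\mathbf{1}$, establishing the pointed isomorphism.

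The one point requiring a little care—and the closest thing to an obstacle—is the verification that the maps $\iota_{n+1,n}^{**}$ computed in \eqref{Eq5.3} are genuinely the connecting maps of the transported system, i.e.\ that the identification really is at the level of \emph{pointed ordered groups} and not merely abstract groups. This is why I would explicitly note that $\iota_{n+1,n}^{**}$, being left-multiplication by the matrix $[\tilde{\mu}((z,\gamma),(z',\gamma'))]$ with nonnegative entries $\tilde{\mu} \geq 0$, is a positive homomorphism, so that it maps the cone \eqref{Eq5.2} at stage $n$ into that at stage $n+1$; this is what guarantees that $\varinjlim$ of the cones is again the positive cone of $\varinjlim$ of the groups, matching the continuity statement for $K_0(\widetilde{A})_+$. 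Everything else is formal, so the proof can be stated concisely by simply collecting the cited facts and invoking the universal property; no substantive new computation is needed beyond observing that $\iota_{n+1,n}^{**}$ preserves positivity and sends the unit to $\mathbf{1}$.
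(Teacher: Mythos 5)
Your proposal is correct and follows essentially the same route as the paper: continuity of $K_0$ under inductive limits (Effros), the stage-wise identification of $K_0(\Phi_n(\widetilde{A}_n))_+ \subset K_0(\Phi_n(\widetilde{A}_n))$ via $\mathrm{cdim}_n$ (Reich), the identification of the connecting maps with left-multiplication by $[\tilde{\mu}((z,\gamma),(z',\gamma'))]$ from \eqref{Eq5.3}, and positivity of these maps from $\tilde{\mu}\geq 0$. Your only additions---spelling out the isomorphism-of-inductive-systems/universal-property bookkeeping and explicitly checking that $\mathrm{ctr}_n(1)(z,\gamma)=\mathrm{Tr}(1_{B(\mathcal{H}_z)})/\dim(z)=1$ so that $[1]\mapsto\mathbf{1}$---are points the paper leaves implicit, and both are correct.
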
 

\begin{remark}\label{R5.2}{\rm For each $n$ the mapping 
\begin{equation*}
f \in \tilde{\prod_{(z,\gamma)\in \widetilde{\mathfrak{Z}}_n}} \frac{\mathbb{Z}}{\dim(z)} \mapsto
\{(z,\gamma) \mapsto \dim(z)f(z,\gamma)\} \in \mathbb{Z}^{\widetilde{\mathfrak{Z}}_n}
\end{equation*}
is an injective group homomorphism, whose image is exactly 
\begin{equation}
\big\langle\mathbb{Z}^{\widetilde{\mathfrak{Z}}_n}\big\rangle := \Bigg\{ h \in \mathbb{Z}^{\widetilde{\mathfrak{Z}}_n}\,;\, \sup_{(z,\gamma)\in\widetilde{\mathfrak{Z}}_n} \frac{|h(z,\gamma)|}{\dim(z)} <+\infty\Bigg\}. 
\end{equation}
 With these mappings $(K_0(\widetilde{A}) \supset K_0(\widetilde{A})_+,[1])$ is identified with 
 \begin{equation}
 \varinjlim \Big(\big\langle\mathbb{Z}^{\widetilde{\mathfrak{Z}}_n}\big\rangle,\big\langle\mathbb{Z}^{\widetilde{\mathfrak{Z}}_n}\big\rangle_+,\dim\Big)
 \end{equation}
 along the mapping from $\big\langle\mathbb{Z}^{\widetilde{\mathfrak{Z}}_n}\big\rangle$ to $\big\langle\mathbb{Z}^{\widetilde{\mathfrak{Z}}_{n+1}}\big\rangle$ given as the left-multiplication of $\infty\times\infty$ matrix 
 \[
\begin{bmatrix} \tilde{m}((z,\gamma),(z',\gamma'))\end{bmatrix}_{\widetilde{\mathfrak{Z}}_{n+1}\times\widetilde{\mathfrak{Z}}_n}, 
\]
where 
\[
\big\langle\mathbb{Z}^{\widetilde{\mathfrak{Z}}_n}\big\rangle_+ := 
\Big\{ h \in \big\langle\mathbb{Z}^{\widetilde{\mathfrak{Z}}_n}\big\rangle\,;\, \text{$h(z,\gamma)\geq0$ for all $(z,\gamma)\in\widetilde{\mathfrak{Z}}_n$}\Big\}
\]
 and $\dim(z,\gamma) = \dim(z)$ holds for every $(z,\gamma) \in \widetilde{\mathfrak{Z}}_n$. This description is completely consistent with dimension groups of AF-algebras. An additional feature here is that $\big\langle\mathbb{Z}^{\widetilde{\mathfrak{Z}}_n}\big\rangle$ is a much smaller set than $\mathbb{Z}^{\widetilde{\mathfrak{Z}}_n}$ except the case when $\widetilde{\mathfrak{Z}}_n$ is a finite set. }   
\end{remark}

We then investigate how the action $\widetilde{\alpha}^\gamma : \Gamma \curvearrowright \widetilde{A}$ behaves on $\mathfrak{D}$. Let $(\widetilde{\alpha}^\gamma)^*$ be the automorphism of $K_0(\widetilde{A})$ induced from $\widetilde{\alpha}^\gamma$ canonically.  

\begin{proposition} \label{P5.3} 
The automorphism $(\widetilde{\alpha}^\gamma)^{**}$ of $\mathfrak{D}$ obtained from $(\widetilde{\alpha}^\gamma)^*$ via $K_0(\widetilde{A}) \cong \mathfrak{D}$ is given as follows. For each $n \geq 0$ we have 
\[
(\widetilde{\alpha}^\gamma)^{**}(\iota_n^{**}(f)) = 
\iota_n^{**}(f\circ T_\gamma^{-1}), \qquad \gamma \in\Gamma, \quad f \in\tilde{\prod_{(z,\gamma)\in\widetilde{\mathfrak{Z}}_n}}\frac{\mathbb{Z}}{\dim(z)},
\]
where $\iota_n^{**} : \tilde{\prod}_{(z,\gamma)\in\widetilde{\mathfrak{Z}}_n}\frac{\mathbb{Z}}{\dim(z)} \to \mathfrak{D}$ is the canonical group-homomorphism. 
\end{proposition}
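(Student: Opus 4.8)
The plan is to reduce everything to the finite levels $\widetilde{A}_n$ and to verify that, under the identification $K_0(\widetilde{A}_n)\cong\tilde{\prod}_{(z,\gamma)\in\widetilde{\mathfrak{Z}}_n}\frac{\mathbb{Z}}{\dim(z)}$ furnished by $\mathrm{cdim}_n$, the automorphism induced by $\widetilde{\alpha}_n^\gamma$ is exactly $f\mapsto f\circ T_\gamma^{-1}$. Since $K_0$ commutes with inductive limits and $\widetilde{\alpha}^\gamma=\varinjlim\widetilde{\alpha}_n^\gamma$, functoriality of $K_0$ then lets me assemble these finite-level maps into $(\widetilde{\alpha}^\gamma)^{**}$, provided they are compatible with the connecting embeddings $\iota_{n+1,n}^{**}$ of Proposition \ref{P5.1}; this last compatibility is where the $\Gamma$-invariance from Proposition \ref{P4.3} enters.

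For the finite-level computation I would work in the picture of Lemma \ref{L4.1}, writing $\beta_\gamma:=\Phi_n\circ\widetilde{\alpha}^\gamma\circ\Phi_n^{-1}$, a $*$-automorphism of $\bigoplus_{(z,\gamma')}zA_n$. By \eqref{Eq4.4}, $\beta_\gamma$ restricts on the center $\ell^\infty(\widetilde{\mathfrak{Z}}_n)$ to the translation $f\mapsto f\circ T_\gamma^{-1}$. The essential operator-algebraic point is the naturality of the center-valued trace: since $\beta_\gamma^{-1}\circ\mathrm{ctr}_n\circ\beta_\gamma$ is again a center-valued trace on $\Phi_n(\widetilde{A}_n)$, its uniqueness (the cited \cite[Theorem V.2.6]{Takesaki:Book1}, cf.\ Remark \ref{R4.4}) forces $\mathrm{ctr}_n\circ\beta_\gamma=\beta_\gamma\circ\mathrm{ctr}_n$, where on the right $\beta_\gamma$ denotes its restriction to the center. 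Amplifying by $\mathrm{Tr}$ on the matrix coordinates and using $\mathrm{cdim}_n([P]_n)=(\mathrm{Tr}\otimes\mathrm{ctr}_n)(P)$, I obtain $\mathrm{cdim}_n((\beta_\gamma)_*[P]_n)=\beta_\gamma(\mathrm{cdim}_n[P]_n)$; that is, the induced map on $\tilde{\prod}_{(z,\gamma)}\frac{\mathbb{Z}}{\dim(z)}$ is precisely $f\mapsto f\circ T_\gamma^{-1}$.

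It then remains to pass to the limit, and here is the only genuinely substantive step: I must check that the finite-level translations commute with the connecting maps, i.e.\ that $\iota_{n+1,n}^{**}(f\circ T_\gamma^{-1})=(\iota_{n+1,n}^{**}f)\circ T_\gamma^{-1}$. Using the explicit form \eqref{Eq5.3}, namely $\iota_{n+1,n}^{**}(f)(z,\eta)=\sum_{(z',\eta')}\tilde{\mu}((z,\eta),(z',\eta'))\,f(z',\eta')$, this reduces after the reindexing $\eta'\mapsto\gamma\eta'$ to the identity $\tilde{\mu}((z,\eta),(z',\gamma\eta'))=\tilde{\mu}((z,\gamma^{-1}\eta),(z',\eta'))$, which is exactly the invariance $\tilde{\mu}\circ(T_\gamma^{-1}\times T_\gamma^{-1})=\tilde{\mu}$ of Proposition \ref{P4.3}. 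With this compatibility in hand, the inductive limit of the maps $f\mapsto f\circ T_\gamma^{-1}$ is well defined and coincides with $(\widetilde{\alpha}^\gamma)^{**}$, yielding $(\widetilde{\alpha}^\gamma)^{**}(\iota_n^{**}(f))=\iota_n^{**}(f\circ T_\gamma^{-1})$. I expect the main obstacle to be the bookkeeping in this reindexing and making sure the direction of translation matches the convention fixed in \eqref{Eq4.4}; the operator-algebraic input is routine once the uniqueness of the center-valued trace is invoked.
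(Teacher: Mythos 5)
Your proposal is correct and takes essentially the same route as the paper: identify the finite-level map $\mathrm{cdim}_n\circ(\widetilde{\alpha}^\gamma)^*\circ(\mathrm{cdim}_n)^{-1}$ with the restriction of $\Phi_n\circ\widetilde{\alpha}^\gamma\circ\Phi_n^{-1}$ to the center, which is translation $f\mapsto f\circ T_\gamma^{-1}$ by \eqref{Eq4.4}; then verify compatibility with the connecting maps $\iota_{n+1,n}^{**}$ via \eqref{Eq5.3} and the $\Gamma$-invariance of $\tilde{\mu}$ from Proposition \ref{P4.3}; and finally pass to the inductive limit using functoriality of $K_0$. Your invocation of the uniqueness of the center-valued trace to obtain $\mathrm{ctr}_n\circ\beta_\gamma=\beta_\gamma\circ\mathrm{ctr}_n$ is simply a precise way of carrying out the finite-level identification that the paper dispatches with ``by the same way as above,'' so the two arguments coincide in substance.
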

\begin{proof} 
Since $\widetilde{\alpha}^\gamma$ is an inductive action, the restriction of $\widetilde{\alpha}^\gamma$ to each $\widetilde{A}_n$ makes sense and induces an automorphism $(\widetilde{\alpha}^\gamma)_n^{**}$ of 
\[
\Big(K_0(\widetilde{A}_n) \overset{\Phi_n^*}{\rightarrow} K_0(\Phi_n(\widetilde{A}_n)) \overset{\mathrm{cdim}_n}{\rightarrow}\Big)\quad \tilde{\prod_{(z,\gamma)\in\widetilde{\mathfrak{Z}}_n}}\frac{\mathbb{Z}}{\dim(z)} \quad \Big(\subset \mathcal{Z}(\Phi_n(\widetilde{A}_n))\Big),
\] 
which we have to compute. This is nothing but $\mathrm{cdim}_n\circ(\widetilde{\alpha}^\gamma)^*\circ(\mathrm{cdim}_n)^{-1}$, and can be shown by the same way as abave to coincide with the restriction of $\Phi_n\circ\widetilde{\alpha}^\gamma\circ\Phi_n^{-1}$ to $
\tilde{\prod}_{(z,\gamma)\in\widetilde{\mathfrak{Z}}_n}\frac{\mathbb{Z}}{\dim(z)}$ ($\subset \mathcal{Z}(\Phi_n(\widetilde{A}_n))$). 
By \eqref{Eq4.4} one has 
\[
(\Phi_n\circ\widetilde{\alpha}^\gamma\circ\Phi_n^{-1})(z'\otimes\delta_\gamma) 
= 
z'\otimes\delta_{\gamma\gamma'}, 
\]
and hence we conclude that 
\begin{equation}\label{}
(\widetilde{\alpha}^\gamma)_n^{**}(f) = f\circ T^{-1}_\gamma, \qquad \gamma \in \Gamma, \quad f \in \tilde{\prod_{(z,\gamma)\in\widetilde{\mathfrak{Z}}_n}}\frac{\mathbb{Z}}{\dim(z)}.
\end{equation}

Since 
\begin{align*}
(\iota_{n+1,n}^{**}\circ(\widetilde{\alpha}^{\gamma''})_n^{**}(f))(z,\gamma) 
&= 
\sum_{(z',\gamma') \in \widetilde{\mathfrak{Z}}_n} \tilde{\mu}((z,\gamma),(z',\gamma'))\,f(T_{\gamma''}^{-1}(z',\gamma')) \qquad \text{(by \eqref{Eq5.3})} \\
&= 
\sum_{(z',\gamma') \in \widetilde{\mathfrak{Z}}_n} \tilde{\mu}(T_{\gamma''}^{-1}(z,\gamma),T_{\gamma''}^{-1}(z',\gamma'))\,f(T_{\gamma''}^{-1}(z',\gamma')) \quad \text{(by Proposition \ref{P4.3})} \\
&= 
\iota_{n+1,n}^{**}(f)(T_{\gamma''}^{-1}(z,\gamma)) \\
&= 
((\widetilde{\alpha}^{\gamma''})_{n+1}^{**}\circ\iota_{n+1,n}^{**}(f))(z,\gamma) 
\end{align*}
for every $(z,\gamma) \in \widetilde{\mathfrak{Z}}_{n+1}$ and $\gamma'' \in \Gamma$, the inductive limit $\varinjlim (\widetilde{\alpha}^\gamma)_n^{**}$ is well defined on $\mathfrak{D}$. Then, it is not difficult to see that this coincides with $(\widetilde{\alpha}^\gamma)^{**}$. 
\end{proof} 

Here is a proposition. 

\begin{proposition}\label{P5.4} Let $\mathcal{W}_\beta^\mathrm{ln}(\tilde{\mu})$ be all the additive maps $\psi : \mathfrak{D}_+ \to [0,\infty]$ such that 
\begin{itemize}
\item[(i)] $\psi\circ(\widetilde{\alpha}^\gamma)^{**} = \gamma^\beta\,\psi$ for all $\gamma \in \Gamma$, 
\item[(ii)] For each $n$, if $f_k \nearrow f$ in $\Big[\tilde{\prod}_{(z,\gamma)\in \widetilde{\mathfrak{Z}}_n} \frac{\mathbb{Z}}{\dim(z)}\Big]_+$ pointwisely as functions over $\widetilde{\mathfrak{Z}}_n$, then $\psi\circ\iota_n^{**}(f_k) \nearrow \psi\circ\iota_n^{**}(f)$ as $k \to \infty$. 
\item[(iii)] $\psi(\iota_0^{**}(\delta_{(1,1)})) = 1$. 
\end{itemize}
Then there is a unique affine bijection $\tilde{\nu} \in H_1^+(\tilde{\mu})_\beta \mapsto \psi_{\tilde{\nu}} \in\mathcal{W}_\beta^\mathrm{ln}(\tilde{\mu})$ so that 
\[
\psi_{\tilde{\nu}}(\iota_n^{**}(\delta_{(z,\gamma)})) = \tilde{\nu}(z,\gamma)
\]
for all $(z,\gamma) \in \widetilde{\mathfrak{Z}}_n$, $n \geq 0$. 
\end{proposition}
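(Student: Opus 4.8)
The plan is to construct the forward map $\tilde{\nu}\mapsto\psi_{\tilde{\nu}}$ by hand, to exhibit its inverse explicitly, and to isolate the finiteness of the inverse as the one delicate point. Fix $\tilde{\nu}\in H_1^+(\tilde{\mu})_\beta$. At each level $n$ I would define an additive map on the positive cone $\big[\tilde{\prod}_{(z,\gamma)\in\widetilde{\mathfrak{Z}}_n}\frac{\mathbb{Z}}{\dim(z)}\big]_+$ by $f\mapsto\sum_{(z,\gamma)\in\widetilde{\mathfrak{Z}}_n}\tilde{\nu}(z,\gamma)\,f(z,\gamma)\in[0,\infty]$, which makes sense since every summand is non-negative. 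Using the formula $\iota_{n+1,n}^{**}(f)(z,\gamma)=\sum_{(z',\gamma')}\tilde{\mu}((z,\gamma),(z',\gamma'))f(z',\gamma')$ from \eqref{Eq5.3}, Tonelli's theorem interchanges the two non-negative sums and the harmonicity item (i) of Lemma \ref{L4.5} collapses the inner sum to $\tilde{\nu}(z',\gamma')$, so these level maps are compatible with the embeddings $\iota_{n+1,n}^{**}$. Since $\mathfrak{D}_+=\bigcup_n\iota_n^{**}\big(\big[\tilde{\prod}_{(z,\gamma)\in\widetilde{\mathfrak{Z}}_n}\frac{\mathbb{Z}}{\dim(z)}\big]_+\big)$ by Proposition \ref{P5.1}, this compatible family descends to a single additive $\psi_{\tilde{\nu}}:\mathfrak{D}_+\to[0,\infty]$ with $\psi_{\tilde{\nu}}(\iota_n^{**}(\delta_{(z,\gamma)}))=\tilde{\nu}(z,\gamma)$. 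Axiom (iii) is then item (iii) of Lemma \ref{L4.5}; normality (ii) is monotone convergence for the counting-measure sum; and the scaling axiom (i) follows by combining Proposition \ref{P5.3} (which turns $(\widetilde{\alpha}^\gamma)^{**}$ into precomposition with $T_\gamma^{-1}$) with the reindexing $\gamma'\mapsto\gamma\gamma'$ and item (ii) of Lemma \ref{L4.5}, which produces exactly the factor $\gamma^\beta$.

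For the inverse, given $\psi\in\mathcal{W}_\beta^\mathrm{ln}(\tilde{\mu})$ I would set $\tilde{\nu}(z,\gamma):=\psi(\iota_n^{**}(\delta_{(z,\gamma)}))$ for $(z,\gamma)\in\widetilde{\mathfrak{Z}}_n$. Granting finiteness (see below), the three properties of $H_1^+(\tilde{\mu})_\beta$ come out as follows. Property (iii) of Lemma \ref{L4.5} is axiom (iii). For property (ii), since $\delta_{(z,1)}\circ T_\gamma^{-1}=\delta_{(z,\gamma)}$, Proposition \ref{P5.3} gives $(\widetilde{\alpha}^\gamma)^{**}(\iota_n^{**}(\delta_{(z,1)}))=\iota_n^{**}(\delta_{(z,\gamma)})$, whence axiom (i) yields $\tilde{\nu}(z,\gamma)=\gamma^\beta\tilde{\nu}(z,1)$. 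For harmonicity (i), I push $\delta_{(z',\gamma')}$ up one level via $\iota_n^{**}(\delta_{(z',\gamma')})=\iota_{n+1}^{**}(\iota_{n+1,n}^{**}(\delta_{(z',\gamma')}))$, where $\iota_{n+1,n}^{**}(\delta_{(z',\gamma')})$ is the bounded function $(z,\gamma)\mapsto\tilde{\mu}((z,\gamma),(z',\gamma'))$. Approximating this function by its finite partial sums and using additivity, the $\mathbb{Q}_{\geq0}$-homogeneity of $\psi$ on the cone (valid since $\mathfrak{D}$ is torsion-free, so each $\psi(\tfrac1N x)=\tfrac1N\psi(x)$), and normality (ii), I obtain $\tilde{\nu}(z',\gamma')=\sum_{(z,\gamma)}\tilde{\mu}((z,\gamma),(z',\gamma'))\,\tilde{\nu}(z,\gamma)$.

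The main obstacle is to show $\tilde{\nu}(z,\gamma)=\psi(\iota_n^{**}(\delta_{(z,\gamma)}))<\infty$. First, from $\delta_{(1,1)}\circ T_\gamma^{-1}=\delta_{(1,\gamma)}$ and Proposition \ref{P5.3}, axioms (i) and (iii) give $\psi(\iota_0^{**}(\delta_{(1,\gamma')}))=\gamma'{}^\beta<\infty$ for every $\gamma'\in\Gamma$. Next, writing $\iota_{n,0}^{**}=\iota_{n,n-1}^{**}\circ\cdots\circ\iota_{1,0}^{**}$, the finiteness $\dim(z)<\infty$ forces only finitely many downward paths from $(z,\gamma)$ to the $0$-th level, so $F:=\{\gamma'\in\Gamma;\ \iota_{n,0}^{**}(\delta_{(1,\gamma')})(z,\gamma)>0\}$ is finite; as the embedding matrices $[\tilde{\mu}(\cdot,\cdot)]$ are row-stochastic (a link property), so is their product $\iota_{n,0}^{**}$, giving $\sum_{\gamma'\in F}\iota_{n,0}^{**}(\delta_{(1,\gamma')})(z,\gamma)=1$ while all other coordinates stay non-negative. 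Hence, with $h:=\sum_{\gamma'\in F}\delta_{(1,\gamma')}$, the element $\iota_{n,0}^{**}(h)-\delta_{(z,\gamma)}$ is pointwise non-negative at level $n$, so $\iota_0^{**}(h)\geq\iota_n^{**}(\delta_{(z,\gamma)})$ in $\mathfrak{D}_+$. Since additive maps into $[0,\infty]$ are order preserving, this yields $\psi(\iota_n^{**}(\delta_{(z,\gamma)}))\leq\psi(\iota_0^{**}(h))=\sum_{\gamma'\in F}\gamma'{}^\beta<\infty$.

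Finally I would verify that the two assignments are mutually inverse and affine. The identity $\psi_{\tilde{\nu}}(\iota_n^{**}(\delta_{(z,\gamma)}))=\tilde{\nu}(z,\gamma)$ shows that starting from $\tilde{\nu}$ and returning recovers $\tilde{\nu}$, while any $\psi\in\mathcal{W}_\beta^\mathrm{ln}(\tilde{\mu})$ agrees with $\psi_{\tilde{\nu}}$ on every $\iota_n^{**}(\delta_{(z,\gamma)})$ and hence, by additivity, $\mathbb{Q}_{\geq0}$-homogeneity and normality, on all of $\mathfrak{D}_+$. Affinity is immediate from $\psi_{\tilde{\nu}}(\iota_n^{**}(f))=\sum_{(z,\gamma)}\tilde{\nu}(z,\gamma)f(z,\gamma)$, which is linear in $\tilde{\nu}$; the convex structure on $H_1^+(\tilde{\mu})_\beta$ is preserved because items (i),(ii) of Lemma \ref{L4.5} are linear constraints and item (iii) is affine.
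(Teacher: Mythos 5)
Your proposal is correct and follows essentially the same route as the paper's own proof: the forward map is the weighted-sum formula $\iota_n^{**}(f)\mapsto\sum_{(z,\gamma)}\tilde{\nu}(z,\gamma)f(z,\gamma)$, made compatible with the embeddings $\iota_{n+1,n}^{**}$ by interchanging the non-negative double sum and invoking harmonicity, and the inverse evaluates $\psi$ at the delta functions $\iota_n^{**}(\delta_{(z,\gamma)})$, with the scaling and normalization axioms matched exactly as you describe. The one place you go beyond the paper is the row-stochastic domination argument giving $\psi(\iota_n^{**}(\delta_{(z,\gamma)}))<\infty$ --- a finiteness check that the paper leaves implicit even though it is needed for $\tilde{\nu}_\psi$ to be $[0,+\infty)$-valued as Definition \ref{D4.6} requires --- and that verification is sound and a worthwhile addition.
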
 
\begin{proof} 
Let $\tilde{\nu} \in H_1^+(\tilde{\mu})_\beta$ be arbitrarily chosen. We observe that 
\begin{align*} 
\sum_{(z',\gamma') \in \widetilde{\mathfrak{Z}}_n} \tilde{\nu}(z',\gamma')\,f(z',\gamma') 
&= 
\sum_{(z',\gamma') \in \widetilde{\mathfrak{Z}}_n} \sum_{(z,\gamma) \in \widetilde{\mathfrak{Z}}_{n+1}} \tilde{\nu}(z,\gamma)\,\tilde{\mu}((z,\gamma),(z',\gamma'))\,f(z',\gamma') \\
&= 
\sum_{(z,\gamma) \in \widetilde{\mathfrak{Z}}_{n+1}} \tilde{\nu}(z,\gamma)\,\sum_{(z',\gamma') \in \widetilde{\mathfrak{Z}}_n} \tilde{\mu}((z,\gamma),(z',\gamma'))\,f(z',\gamma') \\
&= 
\sum_{(z,\gamma) \in \widetilde{\mathfrak{Z}}_{n+1}} \tilde{\nu}(z,\gamma)\,\iota_{n+1,n}^{**}(f)(z,\gamma)
\end{align*}
for every $f \in \Big[\tilde{\prod}_{(z,\gamma)\in \widetilde{\mathfrak{Z}}_n} \frac{\mathbb{Z}}{\dim(z)}\Big]_+$. Hence 
\[
\iota_n^{**}(f) \quad \text{with} \quad f \in \Bigg[\tilde{\prod_{(z,\gamma)\in \widetilde{\mathfrak{Z}}_n}} \frac{\mathbb{Z}}{\dim(z)}\Bigg]_+ \quad \mapsto \quad \sum_{(z,\gamma)\in\widetilde{\mathfrak{Z}}_n} \tilde{\nu}(z,\gamma)\,f(z,\gamma) 
\]
defines a well-defined additive map $\psi_\nu$ from $\mathfrak{D}_+$ to $[0,\infty]$. That the $\tilde{\nu}$ enjoys item (ii) of Definition \ref{D4.6} implies that the $\psi_{\tilde{\nu}}$ does item (i) here. That $\psi_{\tilde{\nu}}$ enjoys items (ii),(iii) is clear from its definition.  

Let $\psi \in \mathcal{W}_\beta^\mathrm{ln}(\tilde{\nu})$ be arbitrarily chosen. Define $\tilde{\nu}_\psi(z,\gamma) := \psi(\iota_n^{**}(\delta_{(z,\gamma)}))$ for each $(z,\gamma) \in \widetilde{\mathfrak{Z}}_n \subset \widetilde{\mathfrak{Z}}$. Using \eqref{Eq5.3} and item (ii) here we can easily confirm that this $\tilde{\nu}_\psi$ enjoys item (i) of Definition \ref{D4.6}. We also have, for every $(z,\gamma) \in \widetilde{\mathfrak{Z}}_n$, $n\geq0$,  
\[
\tilde{\nu}_\psi(z,\gamma) 
= 
\psi(\iota_n^{**}(\delta_{(z,\gamma)})
=
\psi(\iota_n^{**}(T_\gamma^{-1}(\delta_{(z,1)})))
=
\psi((\widetilde{\alpha}^\gamma)^{**}(\iota_n^{**}(\delta_{(z,1)})))
=
\gamma^\beta\,\psi(\iota_n^{**}(\delta_{(z,1)})) 
=
\gamma^\beta\,\tilde{\nu}_\psi(z,1),
\] 
implying that the $\tilde{\nu}_\psi$ enjoys item (ii) of Definition \ref{D4.6}. Finally, $\tilde{\nu}_\psi(1,1) = \psi(\iota_0^{**}(\delta_{(1,1)})) = 1$. Hence we are done. 
\end{proof} 

This proposition together with Theorem \ref{T4.7} gives an interpretation of $K_\beta^\mathrm{ln}(\alpha^t)$ or $H_1^+(\kappa)$ in terms of $K_0$-group. In fact, we have 

\begin{theorem}\label{T5.5} The correspondence $\omega \in K_\beta^\mathrm{ln}(\alpha^t) \mapsto \psi_\omega \in \mathcal{W}_\beta^\mathrm{ln}(\tilde{\mu})$ defined by
\[
\psi_\omega(\iota_n^{**}(\delta_{(z,\gamma)})) = \frac{\dim(z)}{\dim_\beta(z)}\,\omega(z)\,\gamma^\beta, \qquad (z,\gamma) \in \widetilde{\frak{Z}}_n, \quad n =0,1,\dots
\]
is an affine-isomorphism. In particular, each $\psi \in\mathcal{W}^\mathrm{ln}_\beta(\tilde{\mu})$ gives a unique $\omega_\psi \in K_\beta^\mathrm{ln}(\alpha^t)$ in such a way that 
\[
\omega_\psi(a) = \sum_{z\in\mathfrak{Z}_n} \psi(\iota_n^{**}(\delta_{(z,1)}))\,\frac{\mathrm{Tr}(\rho_z^{-\beta}za)}{\dim(z)}, \qquad a \in A_n, \quad n=0,1,\dots, 
\]
and any element of $K_\beta^\mathrm{ln}(\alpha^t)$ arises in this way. 
\end{theorem}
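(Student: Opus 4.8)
The plan is to realize the assignment $\omega \mapsto \psi_\omega$ as the composition of three affine isomorphisms that have already been proved, and then merely to verify that this composite produces the two displayed formulas; bijectivity will then be automatic. The three maps are (b) \cite[Proposition 3.7]{Ueda:preprint20}, carrying $K_\beta^\mathrm{ln}(\alpha^t)$ onto $H_1^+(\kappa)$; (d) Theorem~\ref{T4.7}, carrying $H_1^+(\kappa)$ onto $H_1^+(\tilde{\mu})_\beta$; and Proposition~\ref{P5.4}, carrying $H_1^+(\tilde{\mu})_\beta$ onto $\mathcal{W}_\beta^\mathrm{ln}(\tilde{\mu})$. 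Since each is an affine isomorphism, so is their composite, and it remains only to compute it on the generators $\iota_n^{**}(\delta_{(z,\gamma)})$.

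I would start from $\omega \in K_\beta^\mathrm{ln}(\alpha^t)$ and apply map (b) to obtain the unique $\nu \in H_1^+(\kappa)$ with $\omega(a) = \sum_{z \in \mathfrak{Z}_n} \nu(z)\,\tau_z^\beta(za)$ for $a \in A_n$; evaluating at $a = z$ and using $\tau_z^\beta(z) = 1$ together with $z z'' = 0$ for distinct $z, z'' \in \mathfrak{Z}_n$ gives $\nu(z) = \omega(z)$. Next, Theorem~\ref{T4.7} transports $\nu$ to the unique $\tilde{\nu} \in H_1^+(\tilde{\mu})_\beta$ with $\dim_\beta(z)\,\tilde{\nu}(z,\gamma) = \dim(z)\,\nu(z)\,\gamma^\beta$, and Proposition~\ref{P5.4} then sends $\tilde{\nu}$ to $\psi = \psi_{\tilde{\nu}} \in \mathcal{W}_\beta^\mathrm{ln}(\tilde{\mu})$ with $\psi(\iota_n^{**}(\delta_{(z,\gamma)})) = \tilde{\nu}(z,\gamma)$. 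Chaining these identities yields $\psi(\iota_n^{**}(\delta_{(z,\gamma)})) = \frac{\dim(z)}{\dim_\beta(z)}\,\omega(z)\,\gamma^\beta$, which is exactly the defining formula for $\psi_\omega$; hence $\omega \mapsto \psi_\omega$ coincides with the composite and is an affine isomorphism.

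For the ``in particular'' assertion I would run the same chain backwards. Given $\psi \in \mathcal{W}_\beta^\mathrm{ln}(\tilde{\mu})$, Proposition~\ref{P5.4} returns $\tilde{\nu}_\psi(z,\gamma) = \psi(\iota_n^{**}(\delta_{(z,\gamma)}))$; the inverse of Theorem~\ref{T4.7} gives $\nu(z) = \frac{\dim_\beta(z)}{\dim(z)}\,\psi(\iota_n^{**}(\delta_{(z,1)}))$; and the inverse of map (b) produces $\omega_\psi(a) = \sum_{z \in \mathfrak{Z}_n} \nu(z)\,\tau_z^\beta(za)$. Substituting $\tau_z^\beta(za) = \mathrm{Tr}(\rho_z^{-\beta} za)/\dim_\beta(z)$ from \eqref{Eq2.2} cancels the factors $\dim_\beta(z)$ and delivers the stated formula, while surjectivity of the composite guarantees that every element of $K_\beta^\mathrm{ln}(\alpha^t)$ arises this way.

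Since every constituent map is an already established affine isomorphism, there is no genuine conceptual obstacle; the only point demanding care is the bookkeeping of the normalization factors $\dim(z)/\dim_\beta(z)$ and the power $\gamma^\beta$ as they propagate through the three arrows, so that the final formulas come out with precisely the right constants and no stray factors. A secondary check is that evaluation on the minimal central projections $\delta_{(z,\gamma)}$ determines $\psi_\omega$ on all of $\mathfrak{D}_+$, which is immediate from additivity together with the normality requirement (ii) in the definition of $\mathcal{W}_\beta^\mathrm{ln}(\tilde{\mu})$.
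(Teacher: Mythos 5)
Your proof is correct and is essentially the paper's own argument: the paper obtains Theorem~\ref{T5.5} precisely by chaining the correspondence of \cite[Proposition 3.7]{Ueda:preprint20} between $K_\beta^\mathrm{ln}(\alpha^t)$ and $H_1^+(\kappa)$, Theorem~\ref{T4.7}, and Proposition~\ref{P5.4}, exactly as you do. Your bookkeeping of the factors $\dim(z)/\dim_\beta(z)$ and $\gamma^\beta$, the identification $\nu(z)=\omega(z)$, and the substitution $\tau_z^\beta(za)=\mathrm{Tr}(\rho_z^{-\beta}za)/\dim_\beta(z)$ all check out against the stated formulas.
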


\begin{remark}\label{R5.6} {\rm Let $\mathcal{W}_\beta(K_0(\widetilde{A}))$ be all the additive maps $\psi : K_0(\widetilde{A})_+ \to [0,\infty]$ so that $\psi\circ(\alpha^\gamma)^* = \gamma^\beta\,\psi$ for all $\gamma \in \Gamma$. Then we see that $\mathcal{W}_\beta^\mathrm{ln}(\tilde{\mu})$ sits in $\mathcal{W}_\beta(K_0(\widetilde{A}))$ via $\mathfrak{D} \cong K_0(\widetilde{A})$. Note that $\mathcal{W}_\beta(K_0(\widetilde{A}))$ depends only on $\widetilde{A}$, but $\mathcal{W}_\beta^\mathrm{ln}(\tilde{\mu})$ does not.}
\end{remark}

\section{A concrete example: $\mathrm{U}_q(\infty)$}\label{S6}  

We will illustrate the present method with the infinite dimensional quantum unitary group $\mathrm{U}_q(\infty)$, for whose formulation we follow our previous paper \cite{Ueda:SIGMA22} ({\it n.b.}, the convention of $q$-deformation in both \cite{Gorin:AdvMath12},\cite{Sato:JFA19} does not fit standard references on quantum unitary group $\mathrm{U}_q(n)$, although the difference in consequences is minor, i.e., $q \rightsquigarrow q^{-1/2}$ in \cite{Gorin:AdvMath12} and $q \rightsquigarrow q^{-1}$ in \cite{Sato:JFA19}). Namely, we freely use the notations in \cite[section 4.2]{Ueda:SIGMA22}. However, Greek alphabet $\Gamma$ was used there with different meaning from this paper. 

\subsection{Weight group and Weight-extended branching system}\label{S6.1}
We first have to find the eigenvalues of $\rho_\lambda$ to determine the weight group $\Gamma$ in section \ref{S4}. Here we remark that the $\rho_\lambda$, $\lambda \in \mathbb{S}_n$, naturally satisfy $\mathrm{Tr}(\rho_\lambda) = \mathrm{Tr}(\rho_\lambda^{-1})$. 

\begin{lemma} 
The weight group $\Gamma$ is $q^\mathbb{Z} := \{q^k; k \in \mathbb{Z}\}$. 
\end{lemma}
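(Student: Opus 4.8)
The plan is to compute the eigenvalues of each $\rho_\lambda$ explicitly and then identify the multiplicative group they generate. First I would recall from \cite[section 4.2]{Ueda:SIGMA22} (equation (4.5) there) the explicit form of $\rho_\lambda$: with respect to a weight basis of the irreducible module $\mathcal{H}_\lambda$ the operator $\rho_\lambda$ is diagonal, and on a weight vector of weight $\mu = (\mu_1,\dots,\mu_n)\in\mathbb{Z}^n$ it acts by the scalar $q^{c(\mu)}$, where $c(\mu) := \sum_{i=1}^{n}(n+1-2i)\,\mu_i$ is the pairing of $\mu$ with the sum $2\delta = (n-1,n-3,\dots,-(n-1))$ of the positive roots of $\mathrm{U}_q(n)$. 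This is precisely the normalization for which $\mathrm{Tr}(\rho_\lambda)=\mathrm{Tr}(\rho_\lambda^{-1})$, as already noted before the statement: the longest Weyl group element negates $2\delta$ while preserving all weight multiplicities, so the multiset $\{c(\mu)\}$ of exponents is symmetric under $c\mapsto -c$. It then suffices to prove the two inclusions $\Gamma\subseteq q^{\mathbb{Z}}$ and $q^{\mathbb{Z}}\subseteq\Gamma$.

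For $\Gamma\subseteq q^{\mathbb{Z}}$, I would note that every weight $\mu$ of every $\mathcal{H}_\lambda$ has integer entries, and the coefficients $n+1-2i$ are integers, so each exponent $c(\mu)$ is an integer and hence every eigenvalue of every $\rho_\lambda$ lies in $q^{\mathbb{Z}}$. Since $\Gamma=\Gamma(\rho)$ is by definition the subgroup of $\mathbb{R}_+^\times$ generated by all such eigenvalues (over all $n$ and all $\lambda\in\mathbb{S}_n$), and $q^{\mathbb{Z}}$ is a subgroup containing all of them, the inclusion follows at once.

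For the reverse inclusion it is enough to produce $q$ itself as a single eigenvalue. I would take $n=2$ and the defining representation $\lambda=(1,0)\in\mathbb{S}_2$, whose two weights $\epsilon_1=(1,0)$ and $\epsilon_2=(0,1)$ each have multiplicity one. Here $2\delta=(1,-1)$, so $c(\epsilon_1)=1$ and $c(\epsilon_2)=-1$, giving eigenvalues $q$ and $q^{-1}$ for $\rho_{(1,0)}$. Thus $q\in\Gamma$, and since $q$ generates $q^{\mathbb{Z}}$ we obtain $q^{\mathbb{Z}}\subseteq\Gamma$. Combining the two inclusions proves $\Gamma=q^{\mathbb{Z}}$. (Note it is important to use the stage $n=2$ rather than a general $n$: for the defining representation of $\mathrm{U}_q(n)$ the exponents are $\{n-1,n-3,\dots,-(n-1)\}$, all of the same parity, so for odd $n$ the defining representation alone would only yield $q^{2\mathbb{Z}}$.)

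The work here is conceptually routine, and the one point requiring genuine care — the main (minor) obstacle — is matching conventions. The paper explicitly flags that its $q$-deformation convention differs from those of \cite{Gorin:AdvMath12} and \cite{Sato:JFA19} (by $q\rightsquigarrow q^{-1/2}$ and $q\rightsquigarrow q^{-1}$ respectively), so before reading off exponents I must confirm that in the convention of \cite{Ueda:SIGMA22} the eigenvalue attached to weight $\mu$ is exactly $q^{c(\mu)}$, with $c(\mu)$ integral and with no rescaling by a positive scalar or half-integer shift that would alter the generated group. Once that normalization is fixed, integrality gives the upper bound and the exponents $\pm 1$ furnished by the defining representation at $n=2$ (so that the achievable exponents have greatest common divisor one) give the lower bound, and no further computation is needed.
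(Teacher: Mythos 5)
Your proposal is correct and follows essentially the same route as the paper: the paper likewise reads off from $\rho_\lambda = \pi_\lambda(K_1^{-n+1}K_2^{-n+3}\cdots K_n^{n-1})$ that all eigenvalues are integer powers of $q$ (giving $\Gamma \subseteq q^{\mathbb{Z}}$) and then exhibits $q^{-1}$ as an eigenvalue of $\rho_{(1,0)}$ for $(1,0)\in\mathbb{S}_2$ (giving $q^{\mathbb{Z}} \subseteq \Gamma$). Your extra observations — the Weyl-symmetry explanation of $\mathrm{Tr}(\rho_\lambda)=\mathrm{Tr}(\rho_\lambda^{-1})$, the sign/convention caveat (the paper's formula actually gives $q^{-c(\mu)}$ on weight $\mu$, which is immaterial for the generated group), and the parity remark showing why $n=2$ is the right stage — are all sound refinements of the same argument.
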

\begin{proof}
By \cite[equation (4.17)]{Ueda:SIGMA22} we have 
\[
\rho_\lambda = \pi_\lambda(K_1^{-n+1}K_2^{-n+3}\cdots K_n^{n-1}), \qquad \lambda \in \mathbb{S}_n. 
\] 
The irreducible representations $\pi_\lambda$, $\lambda \in \mathbb{S}_n$, must satisfy that the $\pi_\lambda(K_i)$'s are commonly diagonalized with eigenvalues of the form $q^k$ including at least $q^{\lambda_i}$ for $\pi_\lambda(K_i)$. Thus, the $\rho_{(1,0)}$ ($(1,0) \in \mathbb{S}_2$) has an eigenvalue $q^{-1}$. These show $\Gamma = q^\mathbb{Z}$. 
\end{proof}

The dual of $q^\mathbb{Z}$ is identified with the $1$-dimensional torus $\mathbb{T} = \{\zeta \in \mathbb{C}; |\zeta| = 1\}$ with dual pairing $\langle q^k, \zeta\rangle = \zeta^k$ for any $k \in \mathbb{Z}$ and $\zeta \in \mathbb{T}$. The canonical surjective group-homomorphism from $\mathbb{R}$ to $\mathbb{T}$ is given by $t \mapsto q^{it}$.  

\medskip
The inductive sequence $\widetilde{W^*(\mathrm{U}_q(n))}$, $n=0,1,\dots$, is given as the $W^*$-crossed products $W^*(\mathrm{U}_q(n))\,\bar{\rtimes}_{\vartheta_n^\zeta}\mathbb{T}$. By Proposition \ref{P4.3} its branching graph is given by $\bigsqcup_{n\geq0} \mathbb{S}_n\times q^\mathbb{Z}$ and the multiplicity function is computed by finding the spectral decomposition $\rho_\lambda\rho_{\lambda'}^{-1}$ on $\mathcal{H}_\lambda \overset{\pi_\lambda}{\curvearrowleft} U_q\mathfrak{gl}(n+1)$ with $(\lambda,\lambda') \in \mathbb{S}_{n+1}\times\mathbb{S}_n$, $\lambda' \prec \lambda$, $n \geq 0$.  
As in \cite[section 4.4.5]{Ueda:SIGMA22} we obtain 
\begin{equation} 
\rho_\lambda\rho_{\lambda'}^{-1} 
= 
\pi_{\lambda'}(K_1^{-1}\cdots K_n^{-1})\otimes\pi_{(|\lambda|-|\lambda'|)}(K_1^n) 
\end{equation}
(up to unitary equivalence), where the right-hand side is the representation of $U_q\mathfrak{gl}(n)\otimes U_q\mathfrak{gl}(1)$. Since the branching rule from $U_q\mathfrak{gl}(n) \hookrightarrow U_q\mathfrak{gl}(n+1)$ is the same as the classical case and hence multiplicity-free, we obtain that $\rho_\lambda\rho_{\lambda'}^{-1}$ is of the form $\gamma\,z_\lambda z_{\lambda'}$ with positive scalar $\gamma > 0$ and also that 
\begin{align*} 
\mathrm{Tr}(z_\lambda z_{\lambda'}) 
&= 
s_{\lambda'}(1,\dots,1)\,s_{(|\lambda|-|\lambda'|)}(1) = s_{\lambda'}(1,\dots,1) = \dim(\lambda'), \\
\mathrm{Tr}(\rho_\lambda\rho_{\lambda'}^{-1}) 
&= 
s_{\lambda'}(q^{-1},\dots,q^{-1})\,s_{(|\lambda|-|\lambda'|)}(q^n) \\
&= 
q^{n|\lambda|-(n+1)|\lambda'|} s_{\lambda'}(1,\dots,1) = q^{n|\lambda|-(n+1)|\lambda'|}\dim(\lambda'). 
\end{align*} 
It follows that $\gamma = q^{n|\lambda|-(n+1)|\lambda'|}$ and hence
\begin{equation}
q_{(\lambda,\lambda')}(q^k) = 
\begin{cases}
z_\lambda z_{\lambda'} & (k = n|\lambda|-(n+1)|\lambda'|), \\
0 & \text{(otherwise)}. 
\end{cases}
\end{equation}
Therefore, we obtain
\begin{equation} 
\tilde{m}((\lambda,q^k),(\lambda',q^\ell)) 
= 
\begin{cases}
1 & (\ell-k = n|\lambda|-(n+1)|\lambda'|), \\
0 & \text{(otherwise)} 
\end{cases}
\end{equation}
({\it n.b}., $n|\lambda|-(n+1)|\lambda'| = [\lambda', (|\lambda|-|\lambda'|)]$; see \cite[section 4.4.5]{Ueda:SIGMA22} for this terminology).  
Hence we have determined the branching graph of the $\widetilde{W^*(\mathrm{U}_q(n))}$, $n=0,1,\dots$, completely. With \cite[equation (4.16)]{Ueda:SIGMA22} we remark that this computation is consistent with the construction in section \ref{S4.3}. This is not surprise, because this computation as well as the computation of link \cite[equation (4.16)]{Ueda:SIGMA22} were done by using only the branching rule. 

\subsection{Quantum group interpretation of weight-extension}\label{S6.2} 
We clarify that the algebra $\widetilde{W^*(\mathrm{U}_q(n))} = W^*(\mathrm{U}_q(n))\,\bar{\rtimes}_{\vartheta_n^\zeta}\mathbb{T}$ comes from a compact quantum group. A similar (but not the same) algebra was appeared in an unpublished manuscript of De Commer \cite{DeCommer:preprint}, where $q^\mathbb{Z}$ is replaced with $q^{2\mathbb{Z}}$.  

\medskip
Let $(\mathbb{C}[\mathbb{T}],\Delta_\mathbb{T},S_\mathbb{T},\varepsilon_\mathbb{T})$ be the Hopf $*$-algebra associated with the $1$-dimensional torus $\mathbb{T}$, that is, $\mathbb{C}[\mathbb{T}]$ denotes all the Laurent polynomials $\sum_k c_k \chi_k$ ($c_k \in \mathbb{C}$) in the continuous functions $C(\mathbb{T})$ with $\chi_k(\zeta) = \zeta^k$ in $\zeta \in \mathbb{T}$ ($k \in \mathbb{Z}$), and 
\[
\Delta_\mathbb{T}(\chi_k) = \chi_k\otimes\chi_k, \qquad S_\mathbb{T}(\chi_k) = \chi_{-k}, \qquad \varepsilon_\mathbb{T}(\chi_k) = 1. 
\]
Since $S_\mathbb{T}^2 = \mathrm{id}$, the Woronowicz character or the special positive element of $\mathcal{U}(\mathbb{T})$, the algebraic dual of $\mathbb{C}[\mathbb{T}]$, must be trivial by \cite[Proposition 1.7.9]{NeshveyevTuset:Book13}. 

We define the new Hopf $*$-algebra $(\mathbb{C}[\mathrm{U}_q(n)\times\mathbb{T}], \Delta_{n,\mathbb{T}}, S_{n,\mathbb{T}}, \varepsilon_{n,\mathbb{T}})$ to be $\mathbb{C}[\mathrm{U}_q(n)\times\mathbb{T}] := \mathbb{C}[\mathrm{U}_q(n)]\otimes\mathbb{C}[\mathbb{T}]$ (algebraic tensor product) and 
\begin{equation*} 
\Delta_{n,\mathbb{T}} := \Sigma_{23}\circ(\Delta_n\otimes\Delta_\mathbb{T}), \qquad 
S_{n,\mathbb{T}} := S_n\otimes S_\mathbb{T}, \qquad 
\varepsilon_{n,\mathbb{T}} := \varepsilon_n\otimes\varepsilon_\mathbb{T},  
\end{equation*} 
where $\Sigma$ is the tensor-flip map and we used the leg-notation. The matrix elements of unitary representations $U\otimes\chi_k$ with finite dimensional unitary representations $U$ of $(\mathbb{C}[\mathrm{U}_q(n)],\Delta_n)$ and $k\in\mathbb{Z}$ clearly generate $\mathbb{C}[\mathrm{U}_q(n)\times\mathbb{T}]$ as algebra, and hence the Hopf $*$-algebra indeed defines a compact quantum group by \cite[Theorem 1.6.7]{NeshveyevTuset:Book13}. The corresponding $C^*$-algebra is trivially $C(\mathrm{U}_q(n))\otimes C(\mathbb{T})$ with unique $C^*$-tensor product due to nuclearity. Moreover, the unitary irreducible representations $U_\lambda\otimes\chi_k$, $(\lambda,k)\in\mathbb{S}_n\times\mathbb{Z}$, are easily shown to be mutually inequivalent, and we can prove that they form a complete family of inequivalent, unitary irreducible representations by appealing to the famous orthogonal relation and Peter--Weyl type theorem (see \cite[Theorem 1.4.3(ii) and the discussion following Corollary 1.5.5]{NeshveyevTuset:Book13}). Consequently, 
\begin{equation}
\mathcal{U}(\mathrm{U}_q(n)\times\mathbb{T}) = \prod_{(\lambda,m)\in\mathbb{S}_n\times\mathbb{Z}} B(\mathcal{H}_{(\lambda,m)}) \qquad \text{with}\quad \mathcal{H}_{(\lambda,m)}:=\mathcal{H}_\lambda,
\end{equation}
and hence 
\begin{equation}\label{Eq6.5}
W^*(\mathrm{U}_q(n)\times\mathbb{T}) = \bigoplus_{(\lambda,m)\in\mathbb{S}_n\times\mathbb{Z}} B(\mathcal{H}_{(\lambda,m)}) = W^*(\mathrm{U}_q(n))\,\bar{\otimes}\,\ell^\infty(\mathbb{Z}), 
\end{equation}
which is clearly isomorphic to $\widetilde{W^*(\mathrm{U}_q(n))}$ via $\Phi_n$ of Lemma \ref{L4.1}. 

Choose an $x \in W^*(\mathrm{U}_q(n)) \subset \mathcal{U}(\mathrm{U}_q(n))$ and a $\zeta \in \mathbb{T}$. We regard $\zeta$ as an element of $\mathcal{U}(\mathbb{T})$ by $\zeta(f) := f(\zeta)$ for every $f \in C(\mathbb{T})$, in which $\mathbb{C}[\mathbb{T}]$ sits. For any $a,b \in \mathbb{C}[\mathrm{U}_q(n)]$ and $k,\ell \in \mathbb{Z}$ we have  
\begin{align*} 
(\hat{\Delta}_{n,\mathbb{T}}(x\otimes\zeta))((a\otimes\chi_k)\otimes(b\otimes\chi_\ell)) 
&= 
(x\otimes\zeta)(ab\otimes\chi_{k+\ell}) \\
&=
x(ab) \zeta^{k+\ell} \\
&= 
\hat{\Delta}_n(x)(a\otimes b)\,\hat{\Delta}_\mathbb{T}(\chi_k\otimes\chi_\ell) \\
&= 
(\hat{\Delta}_n(x)_{13}\hat{\Delta}_\mathbb{T}(\zeta)_{24})((a\otimes\chi_k)\otimes(b\otimes\chi_\ell)), 
\end{align*} 
and hence
\begin{equation} 
\hat{\Delta}_{n,\mathbb{T}}(x\otimes\zeta) 
= 
\hat{\Delta}_n(x)_{13}\hat{\Delta}_\mathbb{T}(\zeta)_{24} 
= 
\hat{\Delta}_n(x)_{13}(1\otimes\zeta\otimes1\otimes\zeta) 
\in 
\mathcal{U}((\mathrm{U}_q(n)\times\mathbb{T})^2). 
\end{equation} 
We observe that 
\[
\zeta = \sum_{k\in\mathbb{Z}} \zeta(\chi_k)\,\delta_k = \sum_{k\in\mathbb{Z}} \zeta^k\,\delta_k \in \ell^\infty(\mathbb{Z}) \subset \mathbb{C}^\mathbb{Z} = \mathcal{U}(\mathbb{T}).
\]
Since $\hat{\Delta}_n(x) \in W^*(\mathrm{U}_q(n))\,\bar{\otimes}\,W^*(\mathrm{U}_q(n))$ and the $\zeta \in \mathbb{T}$ generate $\ell^\infty(\mathbb{Z})$ as $W^*$-algebra, we conclude that the restriction of $\hat{\Delta}_{n,\mathbb{T}}$ to $W^*(\mathrm{U}_q(n)\times\mathbb{T})$ coincides with the injective normal $*$-homomorphism
\[
\Sigma_{23}\circ(\hat{\Delta}_n\,\bar{\otimes}\,\hat{\Delta}_\mathbb{T}) : 
W^*(\mathrm{U}_q(n)\times\mathbb{T}) \to (W^*(\mathrm{U}_q(n)\times\mathbb{T}))^{\bar{\otimes}2}.  
\]
It is also easy to see that the restrictions of $\hat{\varepsilon}_{n,\mathbb{T}}$ and $\hat{S}_{n,\mathbb{T}}$ to $\mathcal{U}(\mathrm{U}_q(n))\otimes\mathcal{U}(\mathbb{T})$ (sitting in $\mathcal{U}(\mathrm{U}_q(n)\times\mathbb{T})$ naturally) are exactly $\hat{\varepsilon}_n\otimes\hat{\varepsilon}_\mathbb{T}$ and $\hat{S}_n\otimes\hat{S}_\mathbb{T}$, respectively. In particular, the restriction of $\hat{\varepsilon}_{n,\mathbb{T}}$ to $W^*(\mathrm{U}_q(n)\times\mathbb{T})$ is $\hat{\varepsilon}_n\,\bar{\otimes}\,\hat{\varepsilon}_\mathbb{T}$. Since $\mathcal{F}(\mathrm{U}_q(n)\times\mathbb{T}) = \mathcal{F}(\mathrm{U}_q(n))\otimes c_\mathrm{fin}(\mathbb{Z})$ (algebraic tensor product) with all the finitely supported bi-sequences $c_\mathrm{fin}(\mathbb{Z})$, we have $\hat{S}_{n,\mathbb{T}}^2 = \hat{S}_n^2\otimes\mathrm{id}$ on $\mathcal{F}(\mathrm{U}_q(n)\times\mathbb{T})$. 

We observe that $(U_\lambda\otimes\chi_k)^\mathrm{cc} = U_\lambda^\mathrm{cc}\otimes\chi_k$ by definition and hence $\rho_{(\lambda,k)} = \rho_\lambda\otimes1$ by \cite[Proposition 1.4.4]{NeshveyevTuset:Book13} for every $(\lambda,k) \in \mathbb{S}_n\times\mathbb{Z}$. Therefore, the special positive element for $\mathrm{U}_q(n)\times\mathbb{T}$ must be $\rho_n\otimes1 \in \mathcal{U}(\mathrm{U}_q(n))\otimes\mathbb{C}1 \subset \mathcal{U}(\mathrm{U}_q(n)\times\mathbb{T})$. It follows that the restriction of the unitary antipode $\hat{R}_{n,\mathbb{T}}$ to $W^*(\mathrm{U}_q(n)\times\mathbb{T})$ coincides with $\hat{R}_n\,\bar{\otimes}\,\hat{S}_\mathbb{T}$.  
 
\medskip
Regarding the $\Phi_n$ in Lemma \ref{L4.1} as a map from $\widetilde{W^*(\mathrm{U}_q(n))}$ onto $W^*(\mathrm{U}_q(n)\times\mathbb{T}) = W^*(\mathrm{U}_q(n))\,\bar{\otimes}\,\ell^\infty(\mathbb{Z})$ (see \eqref{Eq6.5}) we observe that 
\begin{equation}
\Phi_n(\pi_{\vartheta_n}(x)) = x\otimes1, \qquad \Phi_n(\lambda(q^{it})) = \rho_n^{it}\otimes q^{it}, \qquad x \in W^*(\mathrm{U}_q(n)), \quad  t\in\mathbb{R}. 
\end{equation}
Hence, via $\Phi_n$, the Hopf $*$-algebra structure $(\hat{\Delta}_{n,\mathbb{T}},\hat{R}_{n,\mathbb{T}},\vartheta_{n,\mathbb{T}}^t = \mathrm{Ad}(\rho_n^{it}\otimes1),\varepsilon_{n,\mathbb{T}})$ on $W^*(\mathrm{U}_q(n)\times\mathbb{T})$ is transferred to that on $\widetilde{W^*(\mathrm{U}_q(n))}$ as follows. Write $\tilde{\Delta}_n := (\Phi_n^{\bar{\otimes}2})^{-1}\circ\hat{\Delta}_{n,\mathbb{T}}\circ\Phi_n$, $\tilde{R}_n := \Phi_n^{-1}\circ\hat{R}_{n,\mathbb{T}}\circ\Phi_n$, $\tilde{\vartheta}_n^t := \Phi_n^{-1}\circ\vartheta_{n,\mathbb{T}}^t\circ\Phi_n$ ({\it n.b.,} this does not correspond to $\widetilde{\alpha}_n^\gamma$ in section \ref{S3}) and $\tilde{\varepsilon}_n^t := \hat{\varepsilon}_{n,\mathbb{T}}\circ\Phi_n$ for simplicity. Then we have
\begin{align*} 
\tilde{\Delta}_n(\pi_{\vartheta_n}(x)\lambda(q^{it})) 
&= 
\pi_{\vartheta_n}^{\bar{\otimes}2}(\hat{\Delta}_n(x))\,(\lambda(q^{it})\otimes\lambda(q^{it})),  \\
\tilde{R}_n(\pi_{\vartheta}(x)\lambda(q^{it}))  
&= 
\lambda(q^{-it})\,\pi_{\vartheta_n}(\hat{R}_n(x)), \\
\tilde{\vartheta}_n^t(\pi_{\vartheta_n}(x)\lambda(q^{it})) 
&= 
\pi_{\vartheta_n}(\vartheta_n^t(x))\lambda(q^{it})), \\
\tilde{\varepsilon}_n(\pi_{\vartheta_n}(x)\lambda(q^{it})) 
&= \hat{\varepsilon}_n(x)
\end{align*}
for any $x \in W^*(\mathrm{U}_q(n))$ and $t \in \mathbb{R}$. Thus, $\widetilde{W^*(\mathrm{U}_q(n))}$ is equipped with the natural structure of the group $W^*$-algebra of compact quantum group $\mathrm{U}_q(n)\times\mathbb{T}$. 

It is easy to see that the dual action of $q^k \in q^\mathbb{Z}$ acts on an generator $x\otimes\delta_m \in W^*(\mathrm{U}_q(n))\,\bar{\otimes}\,\ell^\infty(\mathbb{Z}) = W^*(\mathrm{U}_q(n)\times\mathbb{T})$ as $x\otimes\delta_m \mapsto x\otimes\delta_{m+k}$.  

\medskip
So far, we have seen that each $\widetilde{W^*(\mathrm{U}_q(n))}$ becomes a `compact quantum group'. Moreover, the above computations show that the resulting quantum group structure is compatible with the embedding $\widetilde{W^*(\mathrm{U}_q(n))} \hookrightarrow \widetilde{W^*(\mathrm{U}_q(n+1))}$, $n\geq0$. The embedding is  interpreted, on the $W^*(\mathrm{U}_q(n)\times\mathbb{T})$, $n=0,1,\dots$, as 
\begin{equation}
\begin{aligned} 
x\otimes1 = \Phi_n(\pi_{\vartheta_n}(x)) 
&\mapsto 
\Phi_{n+1}(\pi_{\vartheta_{n+1}}(x)) = x\otimes1 \qquad (x \in W^*(\mathrm{U}_q(n))), \\
\rho_n^{it}\otimes q^{it} = \Phi_n(\lambda(q^{it})) 
&\mapsto 
\Phi_{n+1}(\lambda(q^{it})) = \rho_{n+1}^{it}\otimes q^{it} \qquad (t \in \mathbb{R}), 
\end{aligned}
\end{equation}
or other words,  
\begin{equation} 
x\otimes q^{it} \mapsto (x(\rho_n^{-1}\rho_{n+1})^{it})\otimes q^{it} \qquad (x \in W^*(\mathrm{U}_q(n)),\ t \in \mathbb{R}). 
\end{equation} 
Here, we remark, see \cite[section 4.2.1]{Ueda:SIGMA22}, that 
\[
(\rho_n^{-1}\rho_{n+1})^{it} = (\rho_{n+1}\rho_n^{-1})^{it} = \rho_n^{-it}\rho_{n+1}^{it} = \rho_{n+1}^{it}\rho_n^{-it} \in W^*(\mathrm{U}_q(n))' \cap W^*(\mathrm{U}_q(n+1))
\] 
for every $t \in \mathbb{R}$. Namely, the choice of embedding of $\mathrm{U}_q(n)\times\mathbb{T} \hookrightarrow \mathrm{U}_q(n+1)\times\mathbb{T}$ is not standard. Thus, \emph{although the $\rho_n$, $n=0,1,\dots$, do NOT form an inductive sequence in any sense, the $\rho_n^{it}\otimes q^{it}$, $n=0,1,\dots$, do thanks to the weight-extension of $\mathcal{B}(\mathrm{U}_q(\infty)) = \varinjlim W^*(\mathrm{U}_q(n))$.} This became possible by the famous Fell absorption principle! 

Finally, the projection $e_{q^k}(n)$ in $L(\mathbb{T}) := \lambda(\mathbb{T})'' \subset \widetilde{W^*(\mathrm{U}_q(n))}$ becomes 
\begin{equation} 
e_{q^k}(n) := \sum_{\ell \in \mathbb{Z}} \sum_{\lambda \in \mathbb{S}_n} p_\lambda(q^{k-\ell})\otimes \delta_\ell 
\end{equation} 
in $W^*(\mathrm{U}_q(n)\times\mathbb{T})$, where the double sums can be  interchanged and $\rho_\lambda = \sum_{k \in \mathbb{Z}} q^k\,p_\lambda(q^k)$ (a finite sum; {\it n.b.}, all but finitely many $p_\lambda(q^k)=0$) is the spectral decomposition as in section \ref{S4}. In fact, for any $x \in z_\lambda W^*(\mathrm{U}_q(n))$ we have 
\[
e_1(n)(x\otimes 1)e_1(n) = \sum_{\ell \in \mathbb{Z}} (p_\lambda(q^{-\ell}) x p_\lambda(q^{-\ell}))\otimes\delta_\ell, 
\]   
and 
\[
\mathrm{ctr}_n(e_1(n)(x\otimes 1)e_1(n)) = \sum_{\ell\in\mathbb{Z}} \frac{\mathrm{Tr}(p_\lambda(q^{-\ell})\,x)}{\dim(\lambda)}(1\otimes\delta_\ell). 
\]
Hence, if we assign $q^{-\ell}$ at $1\otimes\delta_\ell$, then the above element becomes $\mathrm{Tr}(\rho_\lambda x)/\dim(\lambda)$. This is a closer look at trick behind Theorem \ref{T3.7} in the quantum group setting. 

\begin{remark}\label{R6.2} {\rm The discussion in this subsection is completely general. Actually, the same interpretation in terms of quantum groups is applicable to any inductive sequence of compact quantum groups, where the $1$-dimensional torus $\mathbb{T}$ and its dual $\mathbb{Z} = \widehat{\mathbb{T}}$ in the above should be replaced with the dual $G$ of the weight group $\Gamma$ and $\Gamma$ itself, respectively. 
}
\end{remark}

}

\end{document}